\numberwithin{equation}{section}
\theoremstyle{thmstyleone}%
\theoremstyle{thmstyletwo}%
\theoremstyle{thmstylethree}%
\newtheorem{thm}{Theorem}[section]
\newtheorem{prop}[thm]{Proposition}
\newtheorem{lem}[thm]{Lemma}
\newtheorem{cor}[thm]{Corollary}
\theoremstyle{definition}
\newtheorem{defn}[thm]{Definition}
\newtheorem{rmk}[thm]{Remark}
\newtheorem{ex}[thm]{Example}
\newcommand{\R}{\mathbb R}
\newcommand{\Z}{\mathbb Z}
\newcommand{\A}{\mathcal A}
\newcommand{\C}{\mathbb C}
\newcommand{\HH}{\mathbb H}
\newcommand{\G}{\mathcal G}
\newcommand{\X}{\mathcal X}
\newcommand{\M}{\mathcal M}
\newcommand{\ttt}{\mathbf t}
\newcommand{\f}{\mathbf f}
\newcommand{\im}{\mathrm {im}}
\newcommand{\g}{\mathbf g}
\newcommand{\ad}{\mathrm {ad}}
\newcommand{\tr}{\mathrm {Tr}}
\begin{document}

\title[Article Title]{A New Gauge-Theoretic Construction of the 4-Dimensional Hyperk{\"a}hler ALE Spaces}


\author{\fnm{Jiajun} \sur{Yan}}\email{jy4au@virginia.edu; https://orcid.org/0000-0002-5682-3693}

\affil{\orgdiv{Department of Mathematics}, \orgname{University of Virginia}, \orgaddress{\street{141 Cabell Drive}, \city{Charlottesville}, \postcode{22903}, \state{VA}, \country{USA}}}


\abstract{Non-compact hyperk{\"a}hler spaces arise frequently in gauge theory. The 4-dimensional hyperk{\"a}hler ALE spaces are a special class of complete non-compact hyperk{\"a}hler spaces. They are in one-to-one correspondence with the finite subgroups of SU(2) and have interesting connections with representation theory and singularity theory, captured by the McKay Correspondence.
The 4-dimensional hyperk{\"a}hler ALE spaces are first classified by Peter Kronheimer via a finite-dimensional hyperk{\"a}hler reduction. In this paper, we give a new gauge-theoretic construction of these spaces. More specifically, we realize each 4-dimensional hyperk{\"a}hler ALE space as a moduli space of solutions to a system of equations for a pair consisting of a connection and a section of a vector bundle over an orbifold Riemann surface, modulo a gauge group action. The construction given in this paper parallels Kronheimer's original construction and hence can also be thought of as a gauge-theoretic interpretation of Kronheimer's construction of these spaces.}

\keywords{hyperk{\"a}hler geometry, gravitational instantons, gauge theory, moduli spaces}

\pacs[MSC Classification]{53D20, 53D30, 14J60,  32L05, 57M99}

\pacs[Statements and Declarations]{(1) Conflicts of Interest Statement: The author declares that there are no conflicts of interest related to the manuscript submitted for publication; (2) Data Availability Statement: The author declares that the manuscript has no associated data.}

\pacs[Acknowledgements]{This paper consists of the main results of the author's Ph.D. thesis. The author would like to thank her advisor Thomas Mark for suggesting this project as well as his patient guidance and long-term support throughout the duration of the project. The author would also like to thank Sergey Cherkis, Lorenzo Foscolo, Aleksander Doan, Richard Wentworth and Saman Habibi Esfahani for their helpful comments and feedback.}

\maketitle

\section{Introduction}

In this paper, we give a new gauge-theoretic construction of all the $4$-dimensional hyperk{\"a}hler ALE (asymptotically locally Euclidean) spaces. These spaces are originally constructed by Peter Kronheimer in his thesis  \cite{kronheimer}. They are in one-to-one correspondence with the finite subgroups of $SU(2)$ and have deep connections with representation theory, singularity theory and low-dimensional topology. Topologically, these spaces are plumbed $4$-manifolds where the plumbing graphs are described by the ADE-type Dynkin diagrams of semi-simple Lie algebras. Geometrically, they are each a resolution of singularity of $\C^2/\Gamma$, where $\Gamma$ is a finite subgroup of $SU(2)$ and the blowup diagram naturally corresponds to the plumbing graph. The interesting connections these spaces share with representation theory, singularity theory and low-dimensional topology are captured by the McKay Correspondence \cite{mckay}. In Kronheimer's construction, each of them is realized through a hyperk{\"a}hler reduction of a finite-dimensional vector space. We will review this construction in Section 2.  

On the other hand, non-compact hyperk{\"a}hler spaces frequently arise in gauge theory as the moduli spaces of solutions to gauge-theoretic equations. Well-known examples include but are not limited to the Hitchin moduli spaces of solutions to self-duality equations on Riemann surfaces \cite{hitchin}, and moduli spaces of monopoles \cite{atiyah-hitchin}, \cite{cherkis}, \cite{foscolo1}, \cite{foscolo2}. A typical approach to construct hyperk{\"a}hler moduli spaces is to realize them via hyperk{\"a}hler reduction. A special class of complete non-compact hyperk{\"a}hler manifolds is known as the gravitational instantons. There are various gauge-theoretic constructions of these hyperk{\"a}hler manifolds such as the ones given in the following (incomplete) list of papers: \cite{atiyah-hitchin},  \cite{boalch}, \cite{cherkis}, \cite{cherkis-k}, \cite{cherkis-w}, \cite{nahm}.  The $4$-dimensional hyperk{\"a}hler ALE spaces are also gravitational instantons and here we give a new construction of these spaces using a gauge-theoretic approach. Our construction shares a strong parallel to Kronheimer's original construction  \cite{kronheimer} which is not gauge-theoretic, and ergo, our construction can be thought of as a gauge-theoretic interpretation of Kronheimer's construction. More specifically, we realize each  $4$-dimensional hyperk{\"a}hler ALE space as the moduli space of solutions to a system of equations for a pair consisting of a connection and a section of a vector bundle over an orbifold Riemann surface modulo a gauge group action, as stated in the following theorem. The precise definitions of the notations in the theorem below will be given in Section 2 and Section 3.

\begin{thm} \label{main}
Let $\tilde{\zeta}=(\tilde{\zeta}_1,\tilde{\zeta}_2,\tilde{\zeta}_3)$, where for all $i$, $\tilde{\zeta}_i\in Z$. Let $$\X_{\tilde{\zeta}}=\{(B,\Theta)\in \A^F_\tau \times C^\infty(S^2/\Gamma, E(\Gamma)) | (\ref{holo} ) - (\ref{mu3})\}/\G^{F,\Gamma}_\tau.$$ Then for a suitable choice of $\tilde{\zeta}$, $\X_{\tilde{\zeta}}$ is diffeomorphic to the resolution of singularity $\widetilde{\C^2/\Gamma}$. Furthermore, $\X_{\tilde\zeta}$ is isometric to  $X_{\zeta}$ in \cite{kronheimer}.
\end{thm}

Below is the layout of the paper. In Section 2, we give a review of Kronheimer's original construction of ALE spaces \cite{kronheimer} and introduce various notations. In Section 3, we give an overview of the new gauge-theoretic construction of ALE spaces where we write down the bundles and gauge groups that we will working with for the construction. We also sketch the procedures for constructing the moduli spaces. In Section 4, we construct hyperk{\"a}hler structures on some infinite-dimensional spaces which will be important for the construction. Section 5 and Section 6 deal with the main technical steps for proving the main theorem (cf. Theorem \ref{main}) and we prove the main theorem in Section 7.  

We emphasize here that all the hyperk{\"a}hler metrics appearing in this paper are \emph{complete}.

\section{Preliminaries}

We begin the section with a review of Kronheimer's construction of ALE spaces in \cite{kronheimer} which will be of great importance throughout the paper. Then, we will lay out the basic setup for the main gauge-theoretic construction, introducing definitions central to the construction as well as fixing notations and conventions.

\subsection{Kronheimer's construction of ALE spaces}

We review Kronheimer's construction of ALE spaces via hyperk{\"a}hler reduction in \cite{kronheimer} in this subsection. 

Let $\Gamma$ be a finite subgroup of $SU(2)$ and let $R$ be its regular representation. Let $Q\cong\C^2$ be the canonical $2$-dimensional representation of $SU(2)$ and let $P=Q\otimes_\C End(R)$, where $End(R)$ denote the endomorphism space of $R$. Let $M=P^\Gamma$ be the space of $\Gamma$-invariant elements in $P$. After fixing a $\Gamma$-invariant hermitian metric on $R$, $P$ and $M$ can be regarded as right $\HH$-modules. Now, choose an orthonormal basis on $Q$, then we can write an element in $P$ as a pair of matrices $(\alpha,\beta)$ with $\alpha,\beta\in End(R)$, and the action of $J$ on $P$ is given by $$J(\alpha,\beta)=(-\beta^*,\alpha^*).$$ Since the action of $\Gamma$ on $P$ is $\HH$-linear, the subspace $M$ is then an $\HH$-submodule, which can be regarded as a flat hyperk{\"a}hler manifold. Explicitly, a pair $(\alpha,\beta)$ is in $M$ if for each $$\gamma=\begin{pmatrix}
u & v \\
-v^* & u^*  
\end{pmatrix} \in \Gamma,$$ where $v^*$ and $u^*$ denote the complex conjugate of $v$ and $u$, respectively, we have 
\begin{equation}\label{1}R(\gamma^{-1})\alpha R(\gamma)=u\alpha+v\beta, \end{equation}  \begin{equation}\label{2}R(\gamma^{-1})\beta R(\gamma)=-v^*\alpha+u^*\beta.\end{equation}

Let $U(R)$ denote the group of unitary transformations of $R$ and let $F$ be the subgroup formed by elements in $U(R)$ that commute with the $\Gamma$-action on $R$. The natural action of $F$ on $P$ is given by the following: for $f\in F$, $$(\alpha,\beta)\mapsto (f\alpha f^{-1},f\beta f^{-1}).$$ 

Again, the action of $F$ on $P$ is $\HH$-linear and preserves $M$. On the other hand, since $F$ acts by conjugation, the scalar subgroup $T\subset F$ acts trivially, and hence, we get an action of $F/T$ on $M$ that preserves $I$, $J$, $K$. 

Now, let $\f/\ttt$ be the Lie algebra of $F/T$ and identify $(\f/\ttt)^*$ with the traceless elements of $\f\subset End(R)$.  As the action of $F/T$ on $M$ is Hamiltonian with respect to $I$, $J$, $K$, we obtain the following moment maps:
$$\mu_1(\alpha,\beta)=\frac{i}{2}([\alpha,\alpha^*]+[\beta,\beta^*]),$$
$$\mu_2(\alpha,\beta)=\frac{1}{2}([\alpha,\beta]+[\alpha^*,\beta^*]),$$
$$\mu_3(\alpha,\beta)=\frac{i}{2}(-[\alpha,\beta]+[\alpha^*,\beta^*]).$$

Let $\mu=(\mu_2,\mu_2,\mu_3): M\to \R^3\otimes(\f/\ttt)^*$. Let $Z$ denote the center of $(\f/\ttt)^*$ and let $\zeta=(\zeta_1,\zeta_2,\zeta_3)\in \R^3\otimes Z$. For $\zeta$ lying in the ``good set", we get $X_\zeta=\mu^{-1}(\zeta)/F$ is a smooth $4$-manifold diffeomorphic to $\widetilde{\C^2/\Gamma}$.

\begin{prop}[cf. Proposition 2.1. in \cite{kronheimer}]\label{hkred}
Suppose that $F$ acts freely on $\mu^{-1}(\zeta)$. Then \begin{enumerate}
\item $d\mu$ has full rank at all points of $\mu^{-1}(\zeta)$, so that $X_\zeta$ is a nonsingular manifold of $\dim M- 2\dim F$ (\emph{resp.}  $\dim M- 4\dim F$),
\item the metric $g$ and complex structures $I$ (\emph{resp.} $I$, $J$, $K$) descend to $X_\zeta$, and equipped with these, $X_\zeta$ is K{\"a}hler (\emph{resp.} hyperk{\"a}hler). \end{enumerate}
 \end{prop}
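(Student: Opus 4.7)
The plan is to treat this as an instance of general (hyper)k\"ahler reduction, so that the substantive step is showing that the free-action hypothesis forces $d\mu$ to have full rank; the descent of the metric and complex structures then follows from standard arguments.

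For part (1), I would compute the formal adjoint of $d\mu_p$ with respect to the hyperk\"ahler metric on $M$ and the natural pairing on $(\f/\ttt)^*$:
$$(d\mu_p)^*(\xi_1,\xi_2,\xi_3) \;=\; IV_{\xi_1}(p)+JV_{\xi_2}(p)+KV_{\xi_3}(p),$$
where $V_\xi$ denotes the fundamental vector field on $M$ associated to $\xi\in\f/\ttt$. Surjectivity of $d\mu_p$ is then equivalent to injectivity of this adjoint. Since the $F/T$-action on $M$ is $\HH$-linear (it commutes with $I,J,K$) and $M$ is a flat quaternionic vector space, the standard hyperk\"ahler-reduction argument shows that freeness of the action at $p$ forces the natural map $\HH\otimes_\R(\f/\ttt)\to T_pM$, $q\otimes\xi\mapsto q\cdot V_\xi(p)$, to be injective; this gives injectivity of $(d\mu_p)^*$. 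The implicit function theorem then yields $\mu^{-1}(\zeta)$ as a smooth submanifold of codimension $3\dimm(F/T)$. Since $F$ is compact the free $F/T$-action on $\mu^{-1}(\zeta)$ is proper, so $X_\zeta$ is a smooth manifold of dimension $\dimm M - 4\dimm(F/T)$. The K\"ahler case with only $\mu_1$ is identical, giving codimension $\dimm(F/T)$ and quotient dimension $\dimm M - 2\dimm(F/T)$.

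For part (2), I would identify $T_{[p]}X_\zeta$ with the horizontal subspace $H_p=\ker d\mu_p\cap\bigl((\f/\ttt)\cdot p\bigr)^{\perp}\subset T_pM$. Using the moment-map identities $\ker d\mu_{k,p}=\bigl(I_k\cdot(\f/\ttt)p\bigr)^{\perp}$ for $k=1,2,3$ together with $F/T$-invariance of $g,I,J,K$, one checks directly that $H_p$ is preserved by each of $I,J,K$. The restrictions of $g$ and $I$ (resp.\ $g,I,J,K$) to $H_p$ are $F/T$-invariant and therefore descend to well-defined tensors on $X_\zeta$. Closedness of the induced K\"ahler forms is inherited from $M$ via pullback to $\mu^{-1}(\zeta)$, and integrability of each complex structure on $X_\zeta$ follows from integrability on $M$ combined with the equivariance of the horizontal distribution, exactly as in the Marsden--Weinstein and Hitchin--Karlhede--Lindstr\"om--Ro\v{c}ek reduction theorems.

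The main, and really only, technical obstacle is the surjectivity of $d\mu_p$; once this is in hand, properness is automatic from compactness of $F$, the dimension count is purely linear-algebraic, and the descent of the (hyper)k\"ahler structure is routine. The proposition being a specific instance of the general hyperk\"ahler quotient theorem, its purpose here is largely to fix the notation $M$, $\mu$, and $F/T$ for the subsequent gauge-theoretic construction.
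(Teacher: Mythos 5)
This proposition is stated in the paper's review section as a direct quotation of Proposition 2.1 of \cite{kronheimer}; the paper gives no proof of its own, and your proposal reproduces exactly the standard Kronheimer/Hitchin--Karlhede--Lindstr\"om--Ro\v{c}ek quotient argument that the citation points to, so the approach is the right one. The only step you state too glibly is the injectivity of $(\xi_1,\xi_2,\xi_3)\mapsto IV_{\xi_1}(p)+JV_{\xi_2}(p)+KV_{\xi_3}(p)$: this does not follow from freeness and $\HH$-linearity alone, but from the fact that $\zeta$ is central, so $\mu^{-1}(\zeta)$ is $F$-invariant and the orbit directions $W_p$ lie in $\ker d\mu_p$; this forces $W_p$, $IW_p$, $JW_p$, $KW_p$ to be mutually orthogonal, and pairing the displayed relation with each $I_kV_{\xi_k}(p)$ then kills each term separately, at which point freeness gives $\xi_k=0$. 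One further notational point: since the scalar subgroup $T$ acts trivially, the group that acts freely and by which one quotients is $F/T$, so the dimension count $\dimm M-4\dimm(F/T)$ you obtain is the correct reading of the stated formula.
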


Now, we review some basic representation theory regarding to the McKay Correspondence mentioned in \cite{kronheimer}. Let $R_0,...,R_r$ be the irreducible representations of $\Gamma$ with $R_0$ the trivial representation, and let $$Q\otimes R_i=\bigoplus_j a_{ij}R_j$$ be the decomposition of $Q\otimes R_i$ into irreducibles. The representations $R_1,...,R_r$ correspond to the set of simple roots $\xi_1,...,\xi_r$ for the associated root system of one of the ADE-type Dynkin diagrams. Furthermore, if $\xi_0=-\sum_1^rn_i\xi_i$ is the negative of the highest root, then we have that for all $i$, $$n_i=\dim R_i.$$ Hence, the regular representation $R$ decomposes as $$R=\bigoplus_i \C^{n_i}\otimes R_i,$$ and $M$ decomposes as $$M=\bigoplus_{i,j}a_{ij}Hom(\C^{n_i},\C^{n_j}),$$ and $F$ can be written as $$F=\times_iU(n_i).$$ Consequently, we get $$\dim_\R M=\sum_{i,j}2a_{ij}n_in_j=\sum_i4n_i^2=4|\Gamma|,$$ and $$\dim_\R F=\sum_in_i^2=|\Gamma|.$$

The center of the Lie algebra $\f$ is spanned by the elements $\sqrt{-1}\pi_i$, where $\pi_i$ is the projection $\pi_i:R\to \C^{n_i}\otimes R_i$ ($i=0,...,r$). Let $h$ be the real Cartan algebra associated to the Dynkin diagram, then there is a linear map $l$ from the center of $\f$ to $h^*$ defined by the following: $$l:\sqrt{-1}\pi_i\mapsto n_i\xi_i.$$ The kernel of $l$ is the one-dimensional subalgebra $\ttt\subset\f$, so on the dual space, we get an isomorphism $$\iota: Z \to h.$$ For each root $\xi$, we write $$D_\xi=\ker(\xi\circ\iota).$$

\begin{prop}[cf. Proposition 2.8. in \cite{kronheimer}]
If $F/T$ does not act freely on $\mu^{-1}(\zeta)$, then $\zeta$ lies  in one of the codimensional-$3$ subspaces $\R^3\otimes D_\xi\subset \R\otimes Z$, where $\xi$ is a root. 
\end{prop}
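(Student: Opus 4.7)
The plan is to show that non-freeness of the $F/T$-action on $\mu^{-1}(\zeta)$ produces a proper $(\alpha,\beta)$-invariant $\Gamma$-subrepresentation of $R$, and then to translate the resulting moment-map constraint into a root-hyperplane condition on $\zeta$.

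First I would unpack the hypothesis: pick $(\alpha,\beta)\in\mu^{-1}(\zeta)$ with non-trivial $F/T$-stabilizer and lift to $f\in F\setminus T$ commuting with $\alpha$ and $\beta$. Unitarity of $f$ and the fact that $f$ commutes with $\Gamma$ make its eigenspace decomposition $R=\bigoplus_\lambda R_\lambda$ an orthogonal splitting into $\Gamma$-subrepresentations, and commutation with $\alpha,\beta$ together with $f^*=f^{-1}$ makes each $R_\lambda$ preserved by $\alpha,\beta,\alpha^*,\beta^*$. Iterating on each summand gives a finest decomposition $R=\bigoplus_k R^{(k)}$ into $(\alpha,\beta)$-indecomposable $\Gamma$-subrepresentations, and non-triviality of the stabilizer ensures at least one $R^{(k)}=R'$ is a proper non-zero subrepresentation. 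Write $R'=\bigoplus_i\C^{m_i}\otimes R_i$.

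The next step is the trace computation. The projection $\pi_{R'}:R\to R'$ is Hermitian and commutes with $\Gamma,\alpha,\alpha^*,\beta,\beta^*$, so $X:=\sqrt{-1}\,\pi_{R'}\in\f$ commutes with $\alpha$ and $\beta$; since each $\mu_k$ is a sum of commutators, cyclic trace manipulation yields $\langle\mu_k(\alpha,\beta),X\rangle=0$. Feeding in $\mu_k(\alpha,\beta)=\zeta_k$ and using that the central element $\zeta_k$ only pairs with the central projection of $X$, one obtains, in the expansion $\zeta_k=\sum_j\sqrt{-1}\,c_j^{(k)}\pi_j$, the linear constraint
$$\mathrm{tr}_{R'}(\zeta_k)=\sqrt{-1}\sum_j c_j^{(k)}m_jn_j=0.$$
Using the trace-free defining condition $\sum_jc_j^{(k)}n_j^2=0$ of $Z$ to eliminate $c_0^{(k)}$ (recall $n_0=1$), this rewrites as $\sum_{i\ge1}(m_i-m_0n_i)n_ic_i^{(k)}=0$, which, unwinding the isomorphism $\iota:Z\to h$ dual to $l:\sqrt{-1}\pi_i\mapsto n_i\xi_i$, is exactly $\xi(\iota(\zeta_k))=0$ for $\xi:=\sum_{i\ge1}(m_i-m_0n_i)\xi_i\in h^*$.

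The main obstacle is to argue that this $\xi$ is a bona fide root. This is where the McKay/representation-theoretic input is decisive: via the McKay correspondence, the restriction $(\alpha,\beta)|_{R'}$ is an indecomposable representation of the preprojective algebra of the McKay quiver attached to $\Gamma$, and the standard classification of such indecomposables (Kac, with the moment-map refinement of Crawley-Boevey) forces the dimension vector $\underline m=(m_i)$ to be a positive real root of the associated affine Kac–Moody algebra. Modding out the null root $\delta=\sum_in_in\xi_i$ sends such a root to a genuine finite root of the ADE root system, and an arithmetic check identifies this image with the $\xi$ above. Since the same $R'$ and hence the same $\xi$ works in each component $k=1,2,3$, one concludes $\zeta\in\R^3\otimes D_\xi$ as required.
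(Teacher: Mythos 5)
Your first two steps coincide with Kronheimer's argument (which the paper reformulates gauge-theoretically in the proof of Lemma \ref{freeaction}): the eigenspace decomposition of a stabilizing $f\in F\setminus T$ produces a proper nonzero $\Gamma$- and $(\alpha,\beta,\alpha^*,\beta^*)$-invariant summand $R'=\bigoplus_i\C^{m_i}\otimes R_i$, and the commutator/trace computation $\langle\mu_k(\alpha,\beta),\sqrt{-1}\,\pi_{R'}\rangle=0$, together with the bookkeeping through $l$ and $\iota$, gives $\xi(\iota(\zeta_k))=0$ for $\xi=\sum_{i\ge1}(m_i-m_0n_i)\xi_i$. That part is correct and matches the paper.

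The gap is in the step ``$\xi$ is a root.'' You derive this from the claim that $(\alpha,\beta)|_{R'}$ is an \emph{indecomposable} module over the (deformed) preprojective algebra, so that Kac/Crawley--Boevey applies. But your $R'$ is only a finest summand of an \emph{orthogonal, $*$-closed} decomposition; that makes it indecomposable as a $*$-representation, which is strictly weaker than indecomposable as a module (a module direct summand need not be $*$-invariant, and vice versa). Without module-indecomposability, the Crawley--Boevey input only yields that $\underline{m}=(m_i)$ is a \emph{sum} of positive roots orthogonal to the deformation parameter $\lambda$ built from $\zeta_2,\zeta_3$ --- which neither makes $\underline{m}$ itself a root nor involves $\zeta_1$ at all. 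To close this you would have to prove module-indecomposability via a polystability argument using the real moment map equation, or else do what Kronheimer and the paper do: replace indecomposability by \emph{minimality}, shrinking $R'$ until the restricted group $F'/T'$ acts freely on $(\alpha,\beta)|_{R'}$, and then invoking the smoothness/dimension statement for the smaller hyperk\"ahler quotient (Proposition 2.1 in \cite{kronheimer}; in the paper's setting, Proposition \ref{dimcalc} applied to the subbundle $\tilde E$) to get $\dim M'-4\dim(F'/T')\ge 0$, i.e. $\sum_{i,j}c_{ij}m_im_j\le 2$, i.e. $\|\xi\|^2\le 2$, whence $\xi$ is a root. That route is self-contained, needs no classification of indecomposables, and uses the three moment maps only through the trace identity you already established. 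Finally, you must also rule out $\xi=0$, i.e. $\underline{m}\in\Z\delta$; this follows from $R'$ being proper and nonzero ($m_i\le n_i$ with strict inequality somewhere), and should be said explicitly rather than folded into the phrase ``positive real root.''
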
 

Hence, the ``good set" mentioned earlier in the subsection refers to the following:  $$(\R^3\otimes Z)^\circ=(\R^3\otimes Z)\setminus \bigcup_\xi(\R^3\otimes D_\xi).$$

The following theorems are also proven in \cite{kronheimer} and \cite{kronheimer2}, and together, they give a complete construction and classification of ALE spaces. For all the theorems below in this subsection, let $(X,g)$ be a $4$-dimensional hyperk{\"a}hler manifold. 

\begin{thm}[cf. Theorem 1.1. in \cite{kronheimer}]
Let three cohomology classes $\alpha_1, \alpha_2, \alpha_3\in H^2(X;\R)$ be given which satisfy the nondegeneracy condition: 

$(*)$ For each $\Sigma\in H_2(X;\Z)$ with $\Sigma\cdot\Sigma=-2$, there exists $i\in\{1,2,3\}$ with $\alpha_i(\Sigma)\neq 0$.
 
Then there exists on $X$ an ALE hyperk{\"a}hler structure for which the cohomology classes of the K{\"a}hler form $[\omega_i]$ are the given $\alpha_i$.
\end{thm}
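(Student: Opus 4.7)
The plan is to construct the desired ALE hyperk\"ahler structure via Kronheimer's hyperk\"ahler reduction reviewed above. First, use the topological type of $X$ to select the finite subgroup $\Gamma \subset SU(2)$: the intersection form on $H_2(X;\Z)$ is the negative of the Cartan matrix of an ADE-type root system, which determines $\Gamma$ up to conjugation. Then apply the hyperk\"ahler reduction $\mu^{-1}(\zeta)/F$ with a carefully chosen parameter $\zeta \in \R^3 \otimes Z$ to produce a smooth $4$-dimensional hyperk\"ahler manifold $X_\zeta$, and argue that for an appropriate $\zeta$ the resulting space is diffeomorphic to $X$ and realizes the prescribed k\"ahler classes.

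The next step is to match the parameter space with the cohomology space. I would use the isomorphism $\iota : Z \to h$ to identify $\R^3 \otimes Z$ with three copies of the real Cartan algebra. On the other hand, $H^2(X;\R)$ is identified with $h^*$ through the basis of exceptional divisors: the $r$ two-spheres $\Sigma_1, \dots, \Sigma_r$ have intersection pairing equal to the negative Cartan matrix, and their Poincar\'e duals span $H^2(X;\R)$. Under these identifications, each $(-2)$-class $\Sigma \in H_2(X;\Z)$ corresponds to a root $\xi$ of the associated root system, so the condition $\alpha_i(\Sigma) = 0$ for all $i$ reads exactly as $(\alpha_1,\alpha_2,\alpha_3) \in \R^3 \otimes D_\xi$ via $\iota$. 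Hence the nondegeneracy condition $(*)$ translates precisely to $\zeta \in (\R^3 \otimes Z)^\circ$, for which Proposition 2.1 already guarantees a smooth hyperk\"ahler $4$-manifold $X_\zeta$.

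The remaining work is to (i) verify that $X_\zeta$ is diffeomorphic to the given $X$, (ii) compute that the cohomology classes $[\omega_1],[\omega_2],[\omega_3]$ on $X_\zeta$ equal $\alpha_1,\alpha_2,\alpha_3$, and (iii) check that the induced metric is ALE. For (i), one exhibits a holomorphic map $X_\zeta \to \C^2/\Gamma$ via the complex moment map $\mu_2+i\mu_3$ and its GIT quotient by the complexified group $F^\C$, and shows that this map is a minimal resolution, so $X_\zeta$ is diffeomorphic to the topological resolution $\widetilde{\C^2/\Gamma} \cong X$. For (iii), the ALE decay follows by comparing the hyperk\"ahler quotient metric to the flat metric on $\C^2/\Gamma$ far from the exceptional set, using compactness of $F$-orbits in bounded regions of $M$ together with uniform estimates on how the zero level sets of $\mu$ and $\mu - \zeta$ differ at infinity.

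The main technical obstacle is (ii): locating the exceptional two-spheres explicitly inside $X_\zeta$ and computing the periods $\int_{\Sigma_j} \omega_i$ from the moment map. My plan is to use the McKay correspondence data to identify, for each simple root $\xi_j$, a distinguished $F$-invariant subspace of $M$ assembled from the summand corresponding to adjacent nodes $i,j$ in the Dynkin diagram, perform the hyperk\"ahler reduction at level $\zeta$ restricted to this subspace to produce a $2$-sphere $\Sigma_j \subset X_\zeta$, and then integrate the reduced symplectic forms over it. The period computation should yield a linear map $\zeta \mapsto ([\omega_1(\zeta)],[\omega_2(\zeta)],[\omega_3(\zeta)])$ which coincides, on each factor, with the Cartan isomorphism $\iota^{-1}$, so that the target classes are realized by setting $\zeta$ to the image of $(\alpha_1,\alpha_2,\alpha_3)$ under the $\R^3$-tensored inverse of $\iota$. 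This computation, linking the abstract reduction parameter to concrete geometric periods, is where the real content lies.
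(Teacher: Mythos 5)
You should first be aware that the paper you are working against does not prove this statement at all: it is quoted (as ``cf.\ Theorem 1.1 in \cite{kronheimer}'') in the background subsection, and the text explicitly says these theorems ``are proven in \cite{kronheimer} and \cite{kronheimer2}.'' There is therefore no in-paper argument to compare yours with; the only fair benchmark is Kronheimer's original proof, and on that score your outline reconstructs his strategy faithfully: determine $\Gamma$ from the intersection form, identify $\R^3\otimes Z$ with three copies of $H^2(X;\R)$ via $\iota$ and the exceptional classes, observe that the nondegeneracy condition $(*)$ is exactly the statement $\zeta\in(\R^3\otimes Z)^\circ$ so that Propositions 2.1 and 2.8 apply, and then establish (i) the identification of $X_\zeta$ with the minimal resolution via the complex moment map and GIT, (ii) the period computation $[\omega_i]=\alpha_i$, and (iii) the ALE asymptotics.

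That said, as a proof your text is a plan rather than an argument: each of the three substantive steps (i)--(iii) is announced but not carried out. Step (ii), which you yourself flag as ``where the real content lies,'' requires actually exhibiting the spheres $\Sigma_j$ inside $X_\zeta$ (Kronheimer does this by reducing the sub-$\HH$-modules of $M$ attached to roots, in effect bootstrapping from the $A_1$/Eguchi--Hanson computation) and verifying that the resulting period map inverts $\iota$; step (iii) requires a quantitative comparison of the quotient metric with the flat metric on $\C^2/\Gamma$ with the correct decay rate, not the qualitative remark you give; and step (i) requires the algebro-geometric argument that the complex quotient is the minimal resolution rather than a partial one. None of these steps would fail --- the route is the correct one --- but what your proposal actually establishes is only the translation of hypothesis $(*)$ into the freeness criterion $\zeta\notin\bigcup_\xi\R^3\otimes D_\xi$, which is the elementary part of the theorem.
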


\begin{thm}[cf. Theorem 1.2. in \cite{kronheimer}]
Every ALE hyperk{\"a}hler $4$-manifold is diffeomorphic to the minimal resolution of $\C^2/\Gamma$ for some $\Gamma\subset SU(2)$, and the cohomology classes of the K{\"a}hler forms on such a manifold must satisfy the nondegeneracy condition $(*)$.
\end{thm}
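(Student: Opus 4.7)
The plan is to establish the two assertions separately: the nondegeneracy of the Kähler classes on any ALE hyperkähler $4$-manifold, and the identification of its diffeomorphism type with a minimal resolution $\widetilde{\C^2/\Gamma}$.

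For the nondegeneracy condition $(*)$, I would use the hyperkähler twistor family: the complex structures $aI + bJ + cK$ with $a^2+b^2+c^2 = 1$ form an $S^2$, and the corresponding Kähler forms are the real linear combinations $a\omega_1 + b\omega_2 + c\omega_3$. Given a class $\Sigma\in H_2(X;\Z)$ with $\Sigma\cdot\Sigma=-2$, one shows that for some complex structure in the twistor family $\Sigma$ is represented by a holomorphic curve, either by deformation-theoretic arguments using Ricci-flatness or by matching $(-2)$-classes with the exceptional divisor of the asymptotic quotient. If $\alpha_i(\Sigma)=0$ for all three $i$, then $\int_\Sigma(a\omega_1+b\omega_2+c\omega_3)=0$ for every unit vector $(a,b,c)$, contradicting the strict positivity $\int_C \omega>0$ of the Kähler area of any irreducible holomorphic curve.

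For the diffeomorphism-type assertion I would proceed in three steps. First, I analyze the asymptotics: the ALE hypothesis provides a chart at infinity modeled on $(\R^4\setminus B_R)/\Gamma$ in which the metric tends to the Euclidean one, and the parallel triple of Kähler forms forces the asymptotic group to preserve the flat hyperkähler structure on $\R^4$, hence $\Gamma\subset SU(2)$. Second, I apply Theorem $1.1$ to produce on $\widetilde{\C^2/\Gamma}$ an ALE hyperkähler metric $g'$ whose three Kähler classes match those of $g$ under the natural identification of $H^2$ provided by the asymptotic chart (the nondegeneracy condition is now available on the $X$-side). Third, I invoke a hyperkähler Torelli-type rigidity for ALE $4$-manifolds: two ALE hyperkähler structures with the same asymptotic group and the same triple of Kähler periods are isometric via a diffeomorphism asymptotic to the identity, which in particular gives the desired diffeomorphism type.

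The hardest step is the Torelli-type rigidity in step three. A plausible route is a continuity/deformation argument on the three-dimensional parameter space of Kähler classes: show that the set of classes for which the ALE hyperkähler structure on $\widetilde{\C^2/\Gamma}$ is unique up to isometry is nonempty (trivial or orbifold point as a base case), open (by an infinite-dimensional implicit function theorem applied to the hyperkähler moment-map system, with a suitable gauge fixing at infinity), and closed (via uniform decay estimates from the ALE geometry together with elliptic regularity). A more conceptual alternative is to pass to the twistor space and show that the asymptotic holomorphic geometry together with the periods of the holomorphic symplectic forms determine the twistor space, and hence the manifold. Either way, once rigidity is in hand, $X$ is diffeomorphic to the model produced in step two, completing the proof.
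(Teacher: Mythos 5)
This theorem is not proven in the paper at all: it is quoted verbatim as background (``The following theorems are also proven in \cite{kronheimer} and \cite{kronheimer2}''), so there is no in-paper argument to compare against; the real proof lives in Kronheimer's Torelli paper. Judged on its own terms, your sketch contains correct ingredients (the holonomy of the end preserving the parallel triple forces $\Gamma\subset SU(2)$; positivity of $\int_C\omega$ on holomorphic curves), but the overall logical architecture is circular in two places. First, in your step two you match the K\"ahler classes of $X$ with classes on $\widetilde{\C^2/\Gamma}$ ``under the natural identification of $H^2$ provided by the asymptotic chart'' --- no such identification exists. The ALE chart identifies only the ends, while $H^2$ of an ALE space is carried entirely by the compact core; producing an isomorphism $H^2(X)\cong H^2(\widetilde{\C^2/\Gamma})$ compatible with intersection forms is essentially equivalent to the diffeomorphism statement you are trying to prove. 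Second, the Torelli-type rigidity you invoke in step three is Theorem 1.3 of the same list, and its known proof takes Theorem 1.2 as an input (one first knows both manifolds are minimal resolutions, then compares periods); so deriving 1.2 from 1.1 plus 1.3 inverts the actual dependency, and your one-sentence continuity-method sketch does not supply an independent proof of the rigidity. The nondegeneracy half has the same issue: the claim that every $\Sigma$ with $\Sigma\cdot\Sigma=-2$ is represented by a holomorphic curve for some twistor parameter is exactly the hard point, and for an abstract ALE hyperk\"ahler manifold it is not available until the diffeomorphism type (and hence the root-lattice structure of $H_2$) is established.

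The route that actually works, and that Kronheimer follows, avoids both circularities: choose a complex structure $I=aI_1+bI_2+cI_3$ generic in the twistor sphere so that $(X,I)$ contains no compact holomorphic curves; show using the ALE asymptotics that $(X,I)$ is then Stein and, via its ring of functions, is an affine surface arising as a deformation of the singularity $\C^2/\Gamma$; and invoke Brieskorn--Tyurina simultaneous resolution to conclude that every smooth fiber of the semiuniversal deformation of $\C^2/\Gamma$ is diffeomorphic to the minimal resolution. The nondegeneracy of the periods then follows because a triple $(\alpha_1,\alpha_2,\alpha_3)$ annihilating a $-2$ class would force every member of the twistor family to lie over the discriminant, contradicting smoothness of $X$. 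If you want to salvage your outline, you would need to replace steps two and three with this deformation-theoretic identification rather than a period-matching argument.
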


\begin{thm}[cf. Theorem 1.3. in \cite{kronheimer}]
If $X_1$ and $X_2$ are two ALE hyperk{\"a}hler $4$-manifolds, and there is a diffeomorphism $X_1\to X_2$ under which the cohomology classes of the K{\"a}hler forms agree, then $X_1$ and $X_2$ are isometric.
\end{thm}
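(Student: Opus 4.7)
The plan is to prove this as a Torelli-type uniqueness theorem, by combining the existence result (Theorem 1.1) with injectivity of a period map on the moduli space of ALE hyperk\"ahler structures.

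First, I would reduce to a common underlying smooth manifold. By Theorem 1.2, both $X_1$ and $X_2$ are diffeomorphic to the minimal resolution $\widetilde{\C^2/\Gamma}$ for some $\Gamma\subset SU(2)$; since $\Gamma$ is determined by the diffeomorphism type of the underlying $4$-manifold, it is the same for both $X_1$ and $X_2$. Pulling back by the given diffeomorphism, it suffices to prove: given two ALE hyperk\"ahler structures on the fixed smooth manifold $X=\widetilde{\C^2/\Gamma}$ with the same K\"ahler classes $[\omega_1],[\omega_2],[\omega_3]\in H^2(X;\R)$, there is a diffeomorphism of $X$ carrying one hyperk\"ahler structure to the other. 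Because each metric is asymptotic to the flat metric on $\C^2/\Gamma$ outside a compact set, a preliminary step is to compose with a diffeomorphism that normalizes the asymptotic coordinates so that both structures share the same chart at infinity.

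Next, I would organize the problem around the moduli space of ALE hyperk\"ahler structures on $X$ modulo diffeomorphisms acting trivially on cohomology and preserving the asymptotic structure, equipped with the period map sending each class to the triple of K\"ahler classes in $(\R^3\otimes H^2(X;\R))^\circ$. The goal is to show this period map is injective. Local injectivity follows from hyperk\"ahler deformation theory: infinitesimal deformations of the structure with vanishing change in periods correspond to $L^2$-harmonic forms on $X$ satisfying an extra vanishing condition, which must vanish by a Weitzenb\"ock-type argument exploiting the ALE decay. For global injectivity, one combines local injectivity with the surjectivity of the period map onto the good set of periods, provided by Theorem 1.1, together with a degree/covering argument based on the connectedness of $(\R^3\otimes H^2(X;\R))^\circ$.

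The main obstacle lies in the global injectivity step in the non-compact setting. In the compact hyperk\"ahler case (for example, K3 surfaces) one has classical Torelli theorems, but for ALE spaces one must carefully track the asymptotic structure at infinity throughout any deformation and verify that the relevant $L^2$-cohomology groups match the naive expectation. An alternative route I would keep in mind proceeds via the twistor space $Z\to\mathbb{CP}^1$: the K\"ahler classes, together with the holomorphic symplectic form along the fibers and the real structure, determine $Z$ as a complex $3$-manifold up to isomorphism, and the inverse twistor construction then recovers the hyperk\"ahler metric uniquely. Either way, the delicate asymptotic analysis needed to promote the infinitesimal rigidity coming from $L^2$-Weitzenb\"ock identities into an honest global rigidity statement is what makes this proof hard, and it is precisely the point at which one must genuinely use the ALE hypothesis rather than mere completeness.
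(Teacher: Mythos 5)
This statement is not proved in the paper at all: it is quoted verbatim as background (Theorem 1.3 of \cite{kronheimer}, proved in Kronheimer's companion paper \cite{kronheimer2} on the Torelli-type theorem for gravitational instantons), so there is no in-paper argument to compare yours against. Judged on its own terms, your proposal is a plan rather than a proof, and the two steps that carry all the weight are precisely the ones left unestablished. Local injectivity of the period map is asserted to follow from ``a Weitzenb\"ock-type argument exploiting the ALE decay,'' and global injectivity from ``a degree/covering argument'' on the good set $(\R^3\otimes H^2(X;\R))^\circ$; but that set is the complement of a union of codimension-$3$ walls, so connectedness alone does not give you a covering-space argument --- you would need properness of the period map and a count of sheets, neither of which is addressed. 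You also need, and do not supply, an argument that the diffeomorphism group acting trivially on $H^2$ and preserving the structure at infinity acts with the right discreteness for the quotient moduli space to make sense.

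The route Kronheimer actually takes in \cite{kronheimer2} is closer to the ``alternative'' you mention only in passing: one shows that \emph{every} ALE hyperk\"ahler $4$-manifold is isometric to one of the explicit quotients $X_\zeta$ of \cite{kronheimer} (this is the real content, proved via the twistor space and a careful analysis of the complex structures and their behaviour at infinity), after which the Torelli statement reduces to the finite-dimensional question of when $X_\zeta$ and $X_{\zeta'}$ are isometric by a diffeomorphism matching K\"ahler classes --- namely exactly when $\zeta$ and $\zeta'$ agree under the identification $H^2(X;\R)\cong Z$ up to the action of the Weyl group. Your reduction to a fixed $\Gamma$ and a fixed chart at infinity is fine as far as it goes, but without either the full deformation-theoretic machinery or the reduction to the explicit models $X_\zeta$, the proposal does not close. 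If you want to pursue this, the productive move is to make the twistor-space route primary rather than a fallback, since that is where the K\"ahler classes genuinely determine the holomorphic data that reconstructs the metric.
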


\subsection{Basic setup for the gauge-theoretic construction}
We start off by considering $S^3$ as a principal $S^1$-bundle over $S^2$ via the dual Hopf fibration. The explicit construction is as follows:
$$S^3=\{(z_1,z_2)\in \C^2: |z_1|^2+|z_2|^2=1\}.$$ The $S^1$-action on $S^3$ is given by the following: for $g=e^{i\theta} \in S^1$,
$$(z_1,z_2)g=(z_1e^{-i\theta}, z_2e^{-i\theta}).$$ 

If we think of the base $S^2$ as the unit sphere sitting inside $\R^3$, we can write down the projection map  explicitly which will be useful later on: let $\pi: S^3 \to S^2$ be the projection map with $$\pi(z_1,z_2)=(2z_1z_2^*,|z_1|^2-|z_2|^2).$$ In terms of real coordinates, we have $$\pi(a, b, c, d)=(2(ac+bd), bc-ad, a^2+b^2-c^2-d^2).$$ Equivalently, we can think of $\pi$ as a map from $S^3$ to $\C P^1$ given by $\pi: S^3 \to \C P^1$ with $\pi(z_1,z_2)=[z_1:z_2]$.

Now, we turn to the associated bundles of $S^3$. A complex vector space $V$ with an $S^1$-action on $V$ determines a vector bundle over $S^2$ associated to $S^3$ with fiber $V$. Below, we consider the specific $S^1$-action on a complex vector space $V$ given by the usual scalar multiplication.
\begin{defn}
Let $V$ be a complex vector space with $S^1$-action given by scalar multiplication. Then $E(V)$ is defined as $E(V)=S^3 \times V/\sim$, where $[p,v]\sim[pg,g^{-1}v]$, for all $g\in S^1$, and $E(\overline{V})$ is defined as $E(\overline{V})=S^3 \times V/\sim$, where $[p,v]\sim[pg,gv]$, for all $g\in S^1$. 

\end{defn}
There are three important examples that we will be working with closely, i.e., the hyperplane bundle, the tautological bundle and the associated bundle with fiber $V=End(R)$, where $R$ is the regular representation of a finite subgroup $\Gamma$ of $SU(2)$. 

\begin{ex}[The hyperplane bundle and the tautological bundle] 
Let $V=\C$. Then for $g=e^{i\theta}\in S^1$, $p\in S^3$ and $v\in\C$,  we have that $[p,v]\sim [pg,g^{-1}v]=[pe^{-i\theta},e^{-i\theta}v]$. Hence, $E(\C)$ is isomorphic to the hyperplane bundle $H$ over $\C P^1$. Similarly, $E(\overline{\C})$ is isomorphic to the tautological bundle over $\C P^1$. 

\end{ex}

\begin{ex}
Let $\Gamma$ be a finite subgroup of $SU(2)$, and let $R$ be the regular representation of $\Gamma$ with invariant hermitian metric. We see that $E(V)$ splits orthogonally into a direct sum of hyperplane bundles, that is, $E(V)=\oplus_i H_i$, where each $H_i=E(\C\cdot e_i)$ is isomorphic to the hyperplane bundle $H$. \end{ex}

\subsection{The $\Gamma$-action and orbifold vector bundles}
Let $\Gamma$ be a finite subgroup of $SU(2)$ as before, and let $V$ be a representation of $\Gamma$ given by $r: \Gamma \to GL_\C(V)$. We want to build an orbifold vector bundle incorporating the $\Gamma$-representation. To proceed, we introduce the following definition.

\begin{defn}

\begin{enumerate}

\item Suppose either $\Gamma$ doesn't contain $-1\in SU(2)$ or $\Gamma$ contains $-1$ and $r(-1)=-1\in GL_\C(V)$. Let $E(V)^\Gamma_r$ be defined as follows: $E(V)^\Gamma_r=S^3 \times V/\sim$, where $[p,v]\sim[pg,g^{-1}v]\sim[ p\gamma,\gamma^{-1} v]$, for all $g\in S^1$ and $\gamma \in \Gamma$, where $\gamma^{-1}v:=r(\gamma^{-1})v$. 

\item Suppose $\Gamma$ contains $-1\in SU(2)$ and $r(-1) \neq -1 \in GL_\C(V)$, we decompose $V$ into the eigenspaces of $r(-1)$, and write $V=V_0\oplus V_1$, where $r(-1)$ acts as $1$ on $V_0$ and acts as $-1$ on $V_1$. Define  $E(V)^\Gamma_r$ to be $E(V)^\Gamma_r=S^3 \times V_1/\sim$, where $[p,v]\sim[pg,g^{-1}v]\sim[ p\gamma,\gamma^{-1} v]$, for all $g\in S^1$ and $\gamma \in \Gamma$, where $\gamma^{-1}v:=r(\gamma^{-1})v$. 

We will oftentimes abbreviate $E(V)^\Gamma_r$ as $E(\Gamma)$. 
\end{enumerate}

\end{defn}

\begin{rmk}
\begin{enumerate}

\item We can think of $E(\Gamma)$ as an orbifold vector bundle over $S^2/\Gamma$.

\item Let $C^\infty(S^2, E(V))$ denote the space of smooth sections of $E(V)$ and let $C^\infty(S^2, E(V))^\Gamma$ denote the space of $\Gamma$-invariant sections of $E(V)$. With the above definition, we always have that $C^\infty(S^2, E(V))^\Gamma \cong C^\infty_{orb}(S^2/\Gamma, E(V)^\Gamma_r)=C^\infty_{orb}(S^2/\Gamma, E(\Gamma))$. Note that we will begin to drop the subscript and simply use $C^\infty(S^2/\Gamma, E(\Gamma))$ or $C^\infty(E(\Gamma))$ to denote the space of orbifold sections of $E(\Gamma)$ or equivalently the $\Gamma$-invariant sections of $E(V)$ in the coming sections.

\item If we let $V$ be equal to the endomorphism space $End(R)$ of the regular representation $R$ of $\Gamma$, then we have $\gamma^{-1}v=R(\gamma^{-1})vR(\gamma)$. Recall that in Kronheimer's construction, when forming $M=P^\Gamma=(Q\otimes V)^\Gamma$, the element $-1\in\Gamma$ acts on $Q$ by scalar multiplication and on $V$ by the $\Gamma$-representation $r(-1)$. Hence, if an element $\sum q \otimes v$ is $\Gamma$-invariant, we must have $\sum q \otimes v=\sum (-q)\otimes r(-1)v$, so $r(-1)$ must act as $-1\in GL_\C(V)$ on $V$, so $\sum q \otimes v$ lies in $Q \otimes V_1$. In other words, $P^\Gamma=(Q\otimes V_1)^\Gamma$.

\end{enumerate}
\end{rmk}

Now, we want to equip the bundles with a pointwise metric. Let $E(V)$ and $E(\Gamma)$ be defined as above. 

\begin{defn}
\begin{enumerate}
\item Let $h_V$ be a hermitian metric on $V$. Then the pointwise hermitian metric $h_{E(V)}$ on $E(V)$ with respect to $h_V$ is given by $h_{E(V)}([p,v_1],[p,v_2])_x=h_V(v_1,v_2)$, where $p \in S^3$ lies in the fiber over $x \in S^2$.
\item Suppose $h_V$ is also $\Gamma$-invariant, then $h_V$ gives rise to a pointwise hermitian metric on $E(\Gamma)$ again given by $h_{E(V)}([p,v_1],[p,v_2])_x=h_V(v_1,v_2)$, where $p \in S^3$ lies in the fiber over $x \in S^2$.
\end{enumerate}
\end{defn}

\begin{rmk}
With the above definition, we can identify $E(\overline{V})$ with the dual bundle $E(V)^*$, where ${[p,v]}^*([p,v'])=h_V([p,v'],[p,v])=[p,v^*]([p,v'])$, for $[p,v^*] \in E(\overline{V})$, and the metric on $E(\overline{V})$ is given by taking $$h_{E(\overline{V})}([p,v_1^*],[p,v_2^*])_x=h_{\overline{V}}(v_1^*,v_2^*)=h_V(v_2,v_1).$$ On the other hand, we can also express $h_{E(V)}$ in terms of the trace, that is, let $$h_{E(V)}([p,v_1],[p,v_2])_x=\tr([p,v_1],[p,v_2]^*)_x=\tr(v_1v_2^*).$$
As a result, we also get $E(\Gamma)^*$.
\end{rmk}

\subsection{The gauge-theoretic framework}

We are ready to introduce the gauge-theoretic framework in this paper. Many of the definitions introduced in this section take inspiration from classical books and papers in gauge theory given by the following: \cite{atiyah-bott}, \cite{da-silva}, \cite{morgan},  \cite{kobayashi2}, \cite{kobayashi1}, \cite{salamon2}. We will mainly be working with the orbifold vector bundle $E(\Gamma)$ that we have defined previously for the main construction. We fix a holomorphic structure on $E(\C)=H$, and denote it by $\bar{\partial}$. For the remaining of the section, we assume $V=End(S)$ to be the endomorphism space of some $\Gamma$-representation $S$. We fix a $\Gamma$-invariant hermitian structure $h_V$ on $V$ and hence get pointwise metrics on $E(V)$ and $E(\Gamma)$. We take $\omega_{vol}$ to be the Fubini-Study form on $\C P^1$. 

Let $A_0$ be the unique Chern connection on $H$ compatible with the holomorphic structure $\bar{\partial}$ and the hermitian structure descending from $E(V)$. Note that $A_0$ will be $\Gamma$-invariant as it is invariant under $SU(2)$.

Let $P$ be the bundle of automorphisms of $E(V)$. Then $P$ is in fact the trivial bundle $S^2 \times GL_\C(V)$. Now, let $F\subset U(S)$ be the subgroup of unitary transformations of $S$ that commute with the $\Gamma$-action, and let $T$ be the scalar subgroup of $F$. Then we can think of $\tilde{P}$ defined such that $\tilde{P}=S^2\times F/T$ as a subbundle of $P$, as we can think of $F/T$ as lying inside $GL_\C(V)$ by acting on $V$ by conjugation. As $F$ is the subgroup of $U(S)$ with elements that commute with the $\Gamma$-action, we also get that $\tilde{P}^\Gamma$ defined as $\tilde{P}^\Gamma=S^2/\Gamma \times F/T$ is a subbundle of the bundle automorphisms of $E(\Gamma)$. This motivates the following definition. 

\begin{defn}
 Let $V=End(S)$ be the endomorphism space of some $\Gamma$-representation $S$. Let $F\subset U(S)$ be the unitary transformations of $S$ that commute with the $\Gamma$-action, and let $T$ be the scalar subgroup sitting inside $F$. 
 \begin{enumerate}
 \item Let the gauge group $\G^{F,\Gamma}$ of $E(\Gamma)$ be defined as $ \G^{F,\Gamma}= Map(S^2/\Gamma,F/T).$ Let $\g^{F,\Gamma}$ denote the Lie algebra of $\G^{F,\Gamma}$. We use $\rho$ to denote an element in $\G^{F,\Gamma}$, and we use $Y$ to denote an element in $\g^{F,\Gamma}$.
\item Let $\G^{F,\Gamma}_\C$ denote the complexification of $\G^{F,\Gamma}$, that is, $$\G^{F,\Gamma}_\C=Map(S^2/\Gamma,F^c/\C^*),$$ where $F^c=GL_\C(V)^\Gamma$ denotes the complex linear transformations of $S$ that commute with the $\Gamma$-action. We use $\kappa$ to denote an element in $\G^{F,\Gamma}_\C$.
\end{enumerate}
\end{defn}

\begin{defn}
We define the configuration space to be $\A^F \times C^\infty(S^2/\Gamma, E(\Gamma))$ where $\A^F$ and $C^\infty(S^2/\Gamma, E(\Gamma))$ are defined as follows.
\begin{enumerate}
\item Let $\A^F$ be the space of connections on $E(\Gamma)$ given by $$\A^F=\{A_0+\kappa^*\partial \kappa^{*-1}+\kappa^{-1}\bar{\partial}\kappa \mid  \kappa \in \G^{F,\Gamma}_\C \},$$ where $A_0$ is the aforementioned Chern connection on $H$ or equivalently $S^3$ thought of as the induced connection on $E(\Gamma)$. We will always denote $\kappa^*\partial \kappa^{*-1}+\kappa^{-1}\bar{\partial}\kappa$ by $B$, and sometimes we omit the base connection $A_0$.
\item Let $C^\infty(S^2/\Gamma, E(\Gamma))$ denote the abbreviation for the space of orbifold vector bundle sections $C^\infty_{orb}(S^2/\Gamma, E(\Gamma))$. 
\end{enumerate}

\end{defn}

\begin{rmk}
\begin{enumerate}
\item Notice that $\G^{F,\Gamma}$ is the subgroup of the group of unitary gauge automorphisms of $E(\Gamma)=E(V)^\Gamma=E(End(S))^\Gamma$ induced by the automorphisms of $E(S)^\Gamma$. And the action of $\rho \in \G^{F,\Gamma}$ on $\A^F \times C^\infty(S^2/\Gamma, E(\Gamma)) $ is given by the following: for a pair $(B,\Theta) \in \A^F \times C^\infty(S^2/\Gamma, E(\Gamma))$, $$\rho \cdot (B,\Theta)=(B+\rho d_{B}\rho^{-1},\rho \Theta \rho^{-1}).$$ Note that here we omit the base connection $A_0$ as $\rho$ fixes $A_0$. 
\item The action of the connection form $B$ on a section $\Theta$ comes from the representation of the Lie algebra of $F^c$ on $V$ induced from the representation $S$. 

\item The key point of the definition of $\A^F$ is that it can be thought of as the complex gauge orbit containing $A_0$, which will become important in the later sections.

\end{enumerate}
\end{rmk}

\begin{defn}[Symplectic structure on $\A^F \times C^\infty(S^2/\Gamma, E(\Gamma))$]
Let $(B_1,\Theta_1)$ and  $(B_2,\Theta)$ be in $\A^F \times C^\infty(S^2/\Gamma, E(\Gamma))$. Let a symplectic $2$-form $\boldsymbol{\Omega}$ on $\A^F \times C^\infty(S^2/\Gamma, E(\Gamma))$ be defined as follows: $$\boldsymbol{\Omega}((B_1,\Theta_1), (B_2,\Theta))=\int_{S^2/\Gamma}B_1\wedge B_2+\int_{S^2/\Gamma}-Im\langle\Theta_1,\Theta_2\rangle\omega_{vol}.$$ 
\end{defn}

\begin{defn}
\begin{enumerate}
\item Let $\G_0^{F,\Gamma}$ denote the based subgroup of $\G^{F,\Gamma}$, that is $$\G_0^{F,\Gamma}=\{\rho\in\G^{F,\Gamma} | \rho(x)=1,\text{for some fixed base point } x\in S^2/\Gamma \}.$$
We also get the complexified version $\G_{0,\C}^{F,\Gamma}$ for the above definition.
\item Let $\G_\tau^{F,\Gamma}$ denote the antipodal-invariant subgroup of $\G^{F,\Gamma}$ and let $\Omega^2_\tau(S^2/\Gamma;\f/\ttt)$ denote the antipodal-invariant subgroup of $\Omega^2(S^2/\Gamma;\f/\ttt)$, where $\tau:S^2 \to S^2$ is the antipodal map given by $x=(a,b,c) \mapsto \tau(x)=(-a,-b,-c)$. We can also think of $\tau$ as a map from $\C P^1$ to $\C P^1$ with $\tau: \C P^1 \to \C P^1, [z_1:z_2]\mapsto [-\bar{z}_2:\bar{z}_1]$. We remark here that $\tau$ commutes with the $\Gamma$-action and hence descends to a map $\tau: S^2/\Gamma \to S^2/\Gamma$.
\end{enumerate}
\end{defn}

Below, we define the $L^2$ inner product on various spaces.
\begin{defn}
\begin{enumerate}
\item Let $\Theta_1$, $\Theta_2$ be two sections of $E(\Gamma)$. We define the $L^2$ inner product of $\Theta_1$ and $\Theta_2$ to be $$\langle\Theta_1,\Theta_2\rangle_{L_2}=\int_{S^2/\Gamma}\langle \Theta_1,\Theta_2\rangle\omega_{vol}=\int_{S^2/\Gamma}\tr(\Theta_1\Theta_2^*)\omega_{vol},$$ where $\langle \Theta_1,\Theta_2\rangle_x=h_{E(V)}(\Theta_1(x),\Theta_2(x))_x=\tr(\Theta_1(x)\Theta_2^*(x))_x$.
\item We identify $\Omega^0(S^2/\Gamma;\f/\ttt)$ and $\Omega^2(S^2/\Gamma;\f/\ttt)$ as dual spaces through the following integration: let $\phi_1\in\Omega^0(S^2/\Gamma;\f/\ttt)$ and $\phi_2\in\Omega^2(S^2/\Gamma;\f/\ttt)$, then $\phi_2(\phi_1)=\int_{S^2/\Gamma}\langle\phi_1,\phi_2\rangle$, where we think of $\phi_2$ as an element in $\Omega^0(S^2/\Gamma;\f/\ttt)$ multiplied by the volume form $\omega_{vol}$, and the inner product is pointwisely given by the inner product on $\f/\ttt$.

\end{enumerate}
\end{defn}

\section{An Overview of the Gauge-Theoretic Construction}

In this section, we describe the main gauge-theoretic construction of the ALE spaces while leaving some details of the construction and most proofs to the following sections. We make an important remark that from this point on and throughout the rest of the paper, we take the $\Gamma$-representation $S$ to be the regular representation $R$ of $\Gamma$ unless otherwise specified, and carry on with the same notations introduced in the previous sections. In particular, we have $V=End(R)$. 
\subsection{Symplectic reduction}
Recall that in the previous section, we define the gauge group to be $ \G^{F,\Gamma}= Map(S^2/\Gamma,F/T)$ acting on the configuration space $\A^F \times C^\infty(S^2/\Gamma, E(\Gamma))$ under the following action: for $\rho \in \G^{F,\Gamma}$, and $(B,\Theta)\in\A^F \times C^\infty(S^2/\Gamma, E(\Gamma))$,
$$\rho\cdot(B,\Theta)=(B+\rho d_{B}\rho^{-1},\rho \Theta \rho^{-1}).$$

\begin{prop}\label{sympred}
The above gauge group action on  $\A^F \times C^\infty(S^2/\Gamma, E(\Gamma))$ is Hamiltonian and gives rise to the following moment map:
$$\tilde{\mu}_1: \A^F \times C^\infty(S^2/\Gamma, E(\Gamma)) \to \Omega^2(S^2/\Gamma;\f/\ttt),$$$$ (B,\Theta)\mapsto F_B-\frac{i}{2}[\Theta,\Theta^*]\omega_{vol}.$$

\end{prop}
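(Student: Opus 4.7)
The plan is to verify the two defining properties of a Hamiltonian moment map for $\tilde{\mu}_1$: the infinitesimal identity $d\langle\tilde{\mu}_1,Y\rangle(b,\theta) = \boldsymbol{\Omega}(Y^\#_{(B,\Theta)},(b,\theta))$ for every $Y \in \g^{F,\Gamma}$ and every tangent vector $(b,\theta)$, together with equivariance of $\tilde{\mu}_1$ under $\G^{F,\Gamma}$ acting coadjointly on $\Omega^2(S^2/\Gamma;\f/\ttt)$. As a preliminary step I would check that the stated formula genuinely lands in $\Omega^2(S^2/\Gamma;\f/\ttt)$: since $B = \kappa^*\partial\kappa^{*-1} + \kappa^{-1}\bar{\partial}\kappa$ satisfies $B^* = -B$ (from the identity $(\kappa^{-1}\bar{\partial}\kappa)^* = -\kappa^*\partial\kappa^{*-1}$), the curvature $F_B$ takes values in the skew-hermitian $\Gamma$-commuting endomorphisms, and the trace part is killed after descending modulo $\ttt$; meanwhile $\tfrac{i}{2}[\Theta,\Theta^*]$ is pointwise skew-hermitian, traceless, and commutes with $\Gamma$ because $\Theta \in C^\infty(S^2/\Gamma,E(\Gamma))$ is $\Gamma$-equivariant.

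Next I would linearize the action at the identity. Setting $\rho = \mathrm{id} + tY + O(t^2)$ in $\rho\cdot(B,\Theta) = (B + \rho d_B\rho^{-1}, \rho\Theta\rho^{-1})$ yields the fundamental vector field $Y^\#_{(B,\Theta)} = (-d_B Y,\,[Y,\Theta])$. Differentiating $\langle\tilde{\mu}_1,Y\rangle$ in the direction $(b,\theta)$ uses the standard variations $\delta F_B = d_B b$ and $\delta[\Theta,\Theta^*] = [\theta,\Theta^*] + [\Theta,\theta^*]$. For the curvature piece, integrating by parts against $Y$ on the closed orbifold $S^2/\Gamma$ (with no boundary contribution, since $\Gamma$-invariant integrands lift to $S^2$ where Stokes's theorem applies) yields $\int_{S^2/\Gamma}(-d_B Y)\wedge b$, matching the first term of $\boldsymbol{\Omega}(Y^\#,(b,\theta))$. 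For the matter piece, the key is the pointwise trace identity $\langle\tfrac{i}{2}([\theta,\Theta^*]+[\Theta,\theta^*]),Y\rangle = \mathrm{Im}\langle[Y,\Theta],\theta\rangle$, which follows from $Y^* = -Y$, cyclicity of the trace, and collecting the hermitian-conjugate pair $w-\bar w = 2i\,\mathrm{Im}(w)$ for $w = \langle[Y,\Theta],\theta\rangle$; this matches the second term of $\boldsymbol{\Omega}(Y^\#,(b,\theta))$ with the correct sign.

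Equivariance of $\tilde{\mu}_1$ is then immediate: the standard gauge transformation law gives $F_{\rho\cdot B} = \rho F_B \rho^{-1}$, while unitarity $\rho^* = \rho^{-1}$ forces $[\rho\Theta\rho^{-1},(\rho\Theta\rho^{-1})^*] = \rho[\Theta,\Theta^*]\rho^{-1}$. Under the identification of $\Omega^2(S^2/\Gamma;\f/\ttt)$ with the dual of $\g^{F,\Gamma}$ via the trace pairing and integration over $S^2/\Gamma$, pointwise conjugation by $\rho$ is precisely the coadjoint action, completing the moment map verification.

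The main obstacle I anticipate is really bookkeeping rather than anything conceptual: pinning down the sign conventions implicit in the wedge-trace pairing $B_1\wedge B_2$ appearing in $\boldsymbol{\Omega}$, verifying that the quotient by the scalar subgroup $T$ leaves the moment map well-defined on $\f/\ttt$ rather than merely $\f$ (this is where the tracelessness of both $F_B$ modulo $\ttt$ and of $[\Theta,\Theta^*]$ enters crucially), and checking that the orbifold integration by parts is genuinely unaffected by the $\Gamma$-fixed points on $S^2$. Once these conventions are fixed, the verification is the standard Atiyah-Bott/Hitchin-type moment-map calculation for a unitary gauge group acting on connections coupled to a Higgs-type matter field.
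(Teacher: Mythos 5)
Your proposal is correct and follows essentially the same route as the paper's proof: compute the fundamental vector field $Y^\#_{(B,\Theta)}=(-d_BY,[Y,\Theta])$, differentiate $\langle\tilde{\mu}_1,Y\rangle$ using $\delta F_B=d_Bb$ and $\delta[\Theta,\Theta^*]=[\theta,\Theta^*]+[\Theta,\theta^*]$, match the two sides via integration by parts and the trace identity, and verify equivariance from $F_{\rho\cdot B}=\rho F_B\rho^{-1}$ and $\rho^*=\rho^{-1}$. Your preliminary check that the formula actually lands in $\Omega^2(S^2/\Gamma;\f/\ttt)$ is a small addition the paper omits, but the substance of the argument is identical.
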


\begin{rmk}
\begin{enumerate}
\item
Notice that $B$ alone isn't a connection whereas both $A_0$ and $A_0+B$ are connections on $E(\Gamma)$. Hence, we can write $F_{A_0+B}=F_{A_0}+F_B$, and $\bar{\partial}_{A_0+B}=\bar{\partial}_{A_0}+B^{0,1}$, where $F_B=F_{A_0+B}-F_{A_0}$. More explicitly, $F_B=dB+B\wedge B$.
\item With the preceding proposition in place, we can write down the following equations: for $\tilde{\zeta}_1\in Z$, where $Z$ is the center of $(\f/\ttt)^*$ thought of as traceless matrices in $\f/\ttt$, we consider
\begin{equation} \label{holo}
\bar{\partial}_{A_0+B} \Theta=0
\end{equation}

\begin{equation} \label{eq1}
F_B-\frac{i}{2}[\Theta,\Theta^*]\omega_{vol}=\tilde{\zeta}_1\cdot\omega_{vol}
\end{equation}
\end{enumerate}
\end{rmk}

The above equations motivate the following definition.
\begin{defn}

 For an element $\tilde{\zeta}_1\in Z$, let $\M(\Gamma, \tilde{\zeta}_1)$ be the moduli space of solutions to \ref{holo} and \ref{eq1} that lie in the configuration space $\A^F \times {C}^\infty(S^2/\Gamma, E(\Gamma))$ modulo the gauge group action, that is, $$\M(\Gamma, \tilde{\zeta}_1)=\{(A_0+B,\Theta) \in\A^F \times {C}^\infty(S^2/\Gamma, E(\Gamma))\mid  (\ref{holo})-(\ref{eq1})\}/\G^{F,\Gamma}.$$

\end{defn}

\begin{prop}\label{moduli1}
For choices of $\tilde{\zeta}_1$ such that $\G^{F,\Gamma}$ acts freely on the space of solutions to \ref{holo} and \ref{eq1} in $\A^F \times C^\infty(S^2/\Gamma,E(\Gamma))$, $\M(\Gamma, \tilde{\zeta}_1)$  can be identified with $\mu_1(\tilde{\zeta}_1)^{-1}/F$ in \cite{kronheimer}.
\end{prop}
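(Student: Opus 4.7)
The plan is to identify $\M(\Gamma, \tilde{\zeta}_1)$ with Kronheimer's $\mu_1^{-1}(\tilde{\zeta}_1)/F$ via complex gauge-fixing, exploiting the defining property that $\A^F$ is the single $\G^{F,\Gamma}_\C$-orbit through the base connection $A_0$. The bijection I would construct sends $[(A_0+B, \Theta)]$ to the element $(\alpha,\beta) \in M$ corresponding to the holomorphic section obtained by complex-gauging $\Theta$ back to the base holomorphic structure on $E(\Gamma)$.

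Concretely, given $(A_0+B, \Theta)$ satisfying (\ref{holo}), pick $\kappa \in \G^{F,\Gamma}_\C$ with $\kappa \cdot A_0 = A_0 + B$ and set $\tilde{\Theta} = \kappa^{-1}\Theta\kappa$. Gauge-covariance of $\bar{\partial}$ gives $\bar{\partial}_{A_0}\tilde{\Theta} = 0$, so $\tilde{\Theta}$ is a global holomorphic section of $E(\Gamma)$. Because $E(V_1) \cong V_1 \otimes H$ with $H^0(\C P^1, H) = Q$, one has $H^0(S^2/\Gamma, E(\Gamma)) = (Q\otimes V_1)^\Gamma = P^\Gamma = M$, canonically identifying $\tilde{\Theta}$ with a pair $(\alpha,\beta) \in M$. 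A direct check shows that a real gauge change $\rho \in \G^{F,\Gamma}$ replaces $\kappa$ by $\rho\kappa$ and leaves $\tilde{\Theta}$ invariant, so the construction descends to $\M(\Gamma,\tilde{\zeta}_1)$. The residual ambiguity in $\kappa$ is the stabilizer of $A_0$ in $\G^{F,\Gamma}_\C$, which by holomorphicity on $\C P^1$ reduces to the constant maps into $F^c/\C^*$ and translates to the $F^c$-conjugation action on $M$; since $T \subset F$ acts trivially on $M$, the $F$- and $F/T$-quotients of $M$ coincide.

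The heart of the proof is identifying (\ref{eq1}) with Kronheimer's $\mu_1(\alpha,\beta) = \tilde{\zeta}_1$. I would polar-decompose $\kappa = \rho \cdot g$ with $\rho$ unitary and $g$ pointwise positive hermitian, and use $\rho^{-1}$ to reduce to the slice $\kappa = g$, where $A_0+B$ is the Chern connection of the modified hermitian metric $h = g^2$; in this slice $F_B$ is expressible in $g$ alone, and $\Theta = g\tilde{\Theta}g^{-1}$. Substituting into (\ref{eq1}) yields a coupled PDE for $g$ over $S^2/\Gamma$ with data $(\alpha,\beta)$. A Hitchin--Kobayashi type argument, in the stable regime ensured by the freeness hypothesis, shows this PDE has a unique solution $g$ exactly when the $F^c$-orbit of $(\alpha,\beta)$ meets $\mu_1^{-1}(\tilde{\zeta}_1)$. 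Combined with the finite-dimensional Kempf--Ness identification $\mu_1^{-1}(\tilde{\zeta}_1)/F \hookrightarrow M/F^c$ at stable points on Kronheimer's side, this produces the asserted bijection.

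The main obstacle is the existence-of-$g$ step, a genuine analytic problem requiring control of a Donaldson-type functional along orbits of the hermitian endomorphism. For a more direct verification that the moment maps correspond, one can compute: writing $\tilde{\Theta}$ locally as $z_1\alpha + z_2\beta$ and using $S^1$-orthogonality of $z_1, z_2$ under integration against $\omega_{vol}$, one finds $\int_{S^2/\Gamma}\tfrac{i}{2}[\Theta,\Theta^*]\omega_{vol} = c\cdot\mu_1(\alpha,\beta)$ for a fixed constant $c$ independent of $(\alpha,\beta)$. Pairing (\ref{eq1}) with constant $\f/\ttt$-valued test functions then recovers $\mu_1(\alpha,\beta) = \tilde{\zeta}_1$ (up to rescaling $\tilde{\zeta}_1$), confirming the moment-map translation independently of the PDE step.
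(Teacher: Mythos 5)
Your strategy is the standard Hitchin--Kobayashi / Kempf--Ness route: complex-gauge every solution back to the slice $B=0$, identify holomorphic sections with $M=(Q\otimes V)^\Gamma$, and then match equation (\ref{eq1}) with Kronheimer's $\mu_1=\tilde\zeta_1$ by solving a hermitian-metric PDE on each stable orbit. The first half of this (the identification of $\mathcal C/\G^{F,\Gamma}_{0,\C}$ with $M$ up to the residual constant $F^c$-action) is exactly the content of Lemma \ref{psimap} and is fine. The genuine gap is the step you yourself flag as ``the main obstacle'': the existence and uniqueness of the positive hermitian endomorphism $g$ solving the coupled vortex equation, together with the claim that solvability over the infinite-dimensional group $\G^{F,\Gamma}_\C$ occurs \emph{exactly} when the finite-dimensional $F^c$-orbit of $(\alpha,\beta)$ meets $\mu_1^{-1}(\tilde\zeta_1)$. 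Nothing in your proposal establishes either direction of this equivalence; the ``only if'' direction is particularly delicate because $\G^{F,\Gamma}_\C$ is vastly larger than $F^c$, so a priori the infinite-dimensional equation could be solvable on orbits where the finite-dimensional one is not. Asserting a ``Hitchin--Kobayashi type argument'' without a stability notion, a Donaldson functional, and a properness estimate leaves the proposition unproved. Your fallback ``direct verification'' does not close this gap either: the identity $\int\frac{i}{2}[\Theta,\Theta^*]\omega_{vol}=c\,\mu_1(\alpha,\beta)$ only holds on the slice $B=0$ where $\Theta=\tilde\Theta$; for general $B$ both $[\Theta,\Theta^*]$ and $F_B$ depend on $\kappa$, and pairing (\ref{eq1}) against \emph{constant} test elements of $\f/\ttt$ kills the curvature term only for central $Y$, so at best you recover the central component of the moment map equation, not the full $(\f/\ttt)^*$-valued identity.

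The paper avoids the analytic problem entirely and argues in the opposite direction. It first embeds Kronheimer's solution set into the moduli space: setting $B=0$ reduces (\ref{holo})--(\ref{eq1}) to $\frac{i}{2}([\alpha,\alpha^*]+[\beta,\beta^*])=\zeta_1$, and Lemma \ref{psimap} together with the uniqueness theorem (Lemma \ref{uniqueness1}: complex-gauge-equivalent solutions are unitarily equivalent) shows this copy of $\mu_1^{-1}(\zeta_1)/F$ sits injectively inside $\M(\Gamma,\tilde\zeta_1)$. It then rules out additional solutions not by solving a PDE but by soft arguments: the freeness hypothesis gives smoothness and the dimension count of Proposition \ref{dimcalc} (which itself reduces to Kronheimer's Proposition 2.1 on the slice $\mathcal S=\{\bar\partial_{A_0+B}\Theta=0\}$), while Lemma \ref{analysis} and Corollary \ref{conncomp} show that solutions with $B$ not unitarily equivalent to $0$ would lie in separate connected components supported on non-stable, higher-codimension complex orbits. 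If you want to salvage your approach, you would need to either carry out the vortex existence theory in the spirit of Garc\'{\i}a-Prada or Bradlow--Daskalopoulos for this orbifold setting, or replace the existence step with the paper's dimension-and-components argument.
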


\begin{rmk}
We will discuss the conditions assumed in the above proposition in detail in the following sections and we will prove the proposition in Section 7. 
\end{rmk}

\subsection{Further reduction}

Everything regarding to the hyperk{\"a}hler structure on $C^\infty(S^2/\Gamma, E(\Gamma))$ appearing in this subsection will be discussed in detail in Section 4. Here we give a brief overview. It turns out that $C^\infty(S^2/\Gamma, E(\Gamma))$ can be given a hyperk{\"a}hler structure. 

Before we write down the K{\"a}hler forms, we first introduce some notations. For a section $\Theta$ of $C^\infty(S^2/\Gamma, E(\Gamma))$, we identify $\Theta$ with an $S^1$- and $\Gamma$-equivariant map $\lambda: S^3 \to End(R)$, and hence we can express $\Theta$ as $\Theta: x \mapsto [p, \lambda(p)]$, for $x \in S^2$ and $p \in \pi^{-1}(x) \subset S^3$.

There is a complex structure $J$, in addition to the standard complex structure $I$, on the space of sections $C^\infty(S^2/\Gamma, E(\Gamma))$, which we can express as follows. Let $\Theta: x \mapsto [p, \lambda(p)]$ be given, the action of $J$ on $\Theta$ is given by $J\Theta: x \mapsto [p, -\lambda(J(p))^*]$, where $p \in S^3$ and $J$ on $S^3$ is just the usual quaternion action. 

\begin{prop}\label{hkmetric}
There are three symplectic forms on $C^\infty(S^2/\Gamma, E(\Gamma))$ compatible with complex structures $I$, $J$, $K$, respectively:

$$\omega_1(\Theta_1,\Theta_2)=\int_{S^2/\Gamma}-Im\langle \Theta_1,\Theta_2\rangle\omega_{vol},$$
$$\omega_2(\Theta_1,\Theta_2)=\int_{S^2/\Gamma}Re \langle J\Theta_1, \Theta_2\rangle \omega_{vol},$$
$$\omega_3(\Theta_1,\Theta_2)=\int_{S^2/\Gamma}-Im\langle J\Theta_1, \Theta_2\rangle\omega_{vol},$$
and a hyperk{\"a}hler metric $g_h$ such that
$$g_h(\Theta_1,\Theta_2)=\int_{S^2/\Gamma}Re\langle\Theta_1,\Theta_2\rangle\omega_{vol},$$
together giving rise to a hyperk{\"a}hler structure on $C^\infty(S^2/\Gamma, E(\Gamma))$.

\end{prop}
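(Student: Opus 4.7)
The plan is to verify that $(I, J, K := IJ, g_h)$ defines a (flat, infinite-dimensional) hyperk\"ahler structure on $C^\infty(S^2/\Gamma, E(\Gamma))$ whose three K\"ahler forms are $\omega_1, \omega_2, \omega_3$. Since $C^\infty(S^2/\Gamma, E(\Gamma))$ is a vector space and all the prescribed structures do not depend on the base point, integrability of the complex structures and closedness of the forms are automatic; the substantive content is pointwise on $S^2/\Gamma$ together with one integration step. First I would show that $J\Theta: x \mapsto [p, -\lambda(Jp)^*]$ genuinely defines a section of $E(\Gamma)$. Representing $\Theta$ as an $S^1$- and $\Gamma$-equivariant map $\lambda: S^3 \to End(R)$ with $\lambda(pg) = g^{-1}\lambda(p)$ and $\lambda(p\gamma) = R(\gamma^{-1})\lambda(p)R(\gamma)$, the key geometric inputs are that left multiplication by $j \in \HH$ on $S^3 \subset \HH^2$ satisfies $J(pg) = J(p)g^{-1}$ for $g \in S^1$ with respect to the dual Hopf action, and $J(p\gamma) = (Jp)\gamma$ for $\gamma \in \Gamma \subset SU(2)$. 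The flip $g \mapsto g^{-1}$ is absorbed precisely by the adjoint $*$, since $(g^{-1}v)^* = v^* g$ for scalar $g \in S^1$; unitarity of $R$ handles the $\Gamma$-equivariance; and when $-1 \in \Gamma$ one checks that the $V_1$-eigenspace condition is preserved by $v \mapsto v^*$.

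Next I would verify the quaternion identities. A direct computation gives
$$(J^2\lambda)(p) = -(J\lambda)(Jp)^* = -(-\lambda(J^2p)^*)^* = \lambda(J^2p) = \lambda(-p) = -\lambda(p),$$
using $j^2 = -1 \in \HH$ together with the $S^1$-equivariance at the scalar $-1 = e^{i\pi}$. For the anticommutation, $(IJ\lambda)(p) = -i\lambda(Jp)^*$ while $(JI\lambda)(p) = -(i\lambda(Jp))^* = i\lambda(Jp)^*$, so $IJ + JI = 0$; hence $K := IJ$ is a third complex structure and the triple $(I, J, K)$ satisfies the standard quaternion relations. The three forms are then identified as $\omega_i(X, Y) = g_h(I_i X, Y)$: the elementary identity $\mathrm{Re}\langle iv, w\rangle = -\mathrm{Im}\langle v, w\rangle$ on a Hermitian inner product takes care of $\omega_1$ and $\omega_3$, while $\omega_2$ matches by inspection.

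It remains to show that $g_h$ is Hermitian with respect to each $I_i$. For $I$ this is pointwise; for $K$ it follows once $I$ and $J$ are done. The content is in $J$: pointwise, the cyclic property of the trace inner product yields $\langle J\lambda_1(p), J\lambda_2(p)\rangle = \overline{\langle \lambda_1(Jp), \lambda_2(Jp)\rangle}$, so on taking real parts the integrand at $x = \pi(p)$ equals the integrand at $\tau(x) = \pi(Jp)$, and the integral over $S^2/\Gamma$ is invariant because $\tau^*\omega_{vol} = -\omega_{vol}$ cancels against the orientation-reversal of the antipodal map; this pushes down to $S^2/\Gamma$ through $S^2 \to S^2/\Gamma$. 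Closedness of each $\omega_i$ is then immediate (constant 2-forms on a vector space), and nondegeneracy follows from positive-definiteness of $g_h$ combined with the invertibility of each $I_i$. The main obstacle is precisely this $J$-invariance of $g_h$: every other step reduces to a quick pointwise computation, but here $J$ genuinely mixes the anti-linear adjoint on the fiber with the quaternionic map $J: S^3 \to S^3$ covering the antipodal on $S^2/\Gamma$, so keeping track of signs requires the orientation-versus-measure subtlety when integrating $f \circ \tau$ against $\omega_{vol}$.
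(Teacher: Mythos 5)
Your proposal is correct, and it is organized differently from the paper's argument in a way worth noting. The paper proves the proposition by attacking the two-forms head-on: it verifies the skew-symmetry of $\omega_3$ (and, mutatis mutandis, $\omega_2$) directly from the pointwise trace identity $\langle J\Theta_1,\Theta_2\rangle_x=\tr(-\lambda_1(J(p))^*\lambda_2(p)^*)$ together with the relation $\tau^*\omega_{vol}=-\omega_{vol}$, and then declares closedness, nondegeneracy, and compatibility with $g_h$ to be evident; the quaternion relations themselves are deferred to Proposition \ref{hkstruc}, which is stated without proof. You instead front-load the linear-algebraic package: well-definedness of $J$ on sections, the relations $J^2=-1$ and $IJ+JI=0$ with $K=IJ$, and the $J$-Hermitian property of $g_h$, after which the identities $\omega_i=g_h(I_i\cdot,\cdot)$ make skew-symmetry, closedness, and nondegeneracy formal consequences. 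The two routes converge on the identical key step -- the cancellation of $\tau^*\omega_{vol}=-\omega_{vol}$ against the orientation reversal of $\tau$ when integrating a function precomposed with $\tau$ -- which you correctly isolate as the only genuinely delicate point (it is what makes $g_h(J\cdot,J\cdot)=g_h(\cdot,\cdot)$ hold even though $J$ moves base points). Your version buys a more complete verification, since it supplies the equivariance and quaternion checks the paper omits; the paper's version is shorter because it assumes those and only needs the one direct computation. The only blemish is the harmless slip describing $J$ as left multiplication by $j$ on $S^3\subset\HH^2$ rather than $S^3\subset\HH$ viewed as a right $\HH$-module; the equivariance identities you actually use ($J(pg)=J(p)g^*$ and commutation with the $SU(2)$-action) are the correct ones from the paper's Lemma 4.1.
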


We will prove the above proposition in Section 4. It turns out that the action of the $\tau$-invariant gauge group $\G^{F,\Gamma}_\tau$ on the space of sections of $E(\Gamma)$ with respect to each one of the three symplectic forms is again Hamiltonian. Hence, we can write down the following additional moment maps and operate a further reduction on the configuration space:

\begin{itemize}

\item $\tilde{\mu}_2:  C^\infty(S^2/\Gamma, E(\Gamma))\to \Omega^2(S^2/\Gamma;\f/\ttt), \Theta \mapsto -\frac{1}{4}([J\Theta,\Theta^*]-[\Theta,J\Theta^*])\omega_{vol}$,

\item $\tilde{\mu}_3:   C^\infty(S^2/\Gamma, E(\Gamma))\to \Omega^2(S^2/\Gamma;\f/\ttt), \Theta \mapsto -\frac{i}{4}([J\Theta,\Theta^*]+[\Theta,J\Theta^*])\omega_{vol}$.

\end{itemize}

We get two additional moment map equations: let $\tilde{\zeta}_2, \tilde{\zeta}_3 \in Z$, consider

\begin{equation}\label{mu2}
-\frac{1}{4}([J\Theta,\Theta^*]-[\Theta,J\Theta^*])\omega_{vol}=\tilde{\zeta}_2\cdot\omega_{vol},
\end{equation}

\begin{equation}\label{mu3}
-\frac{i}{4}([J\Theta,\Theta^*]+[\Theta,J\Theta^*])\omega_{vol}=\tilde{\zeta}_3\cdot\omega_{vol}.
\end{equation}

\begin{defn}
Let $\A^F_\tau\subset \A^F$ be the subspace of connections in $\A^F$ on $E(\Gamma)$ given by $$\A^F_\tau=\{A_0+\kappa^*\partial \kappa^{*-1}+\kappa^{-1}\bar{\partial}\kappa \mid  \kappa \in \G^{F,\Gamma}_{\tau,\C} \},$$ where $A_0$ is again the base Chern connection on $E(\Gamma)$, and  $\G^{F,\Gamma}_{\tau,\C}$ is the complexification of the $\tau$-invariant subgroup $\G^{F,\Gamma}_\tau$. 
\end{defn}

\begin{rmk}
\begin{enumerate}
\item
We will make the statement of ``a suitable choice of $\tilde{\zeta}$" in Theorem \ref{main} precise in Section $7$ where we also prove the theorem.  

\item We remark that \ref{holo} -- \ref{eq1} resemble Hitchin's equations but they are not the same. First of all, the bundle considered here with respect to which these equations are written is different from that of Hitchin; furthermore, the pair consists of a connection and a section whereas Hitchin's equations consider a connection and a $(1,0)$-form. 
\end{enumerate}
\end{rmk}

\subsection{Proof of Proposition \ref{sympred}}
Here in this subsection, we give the proof of Proposition \ref{sympred}, which involves simply standard calculations.

\begin{proof}[Proof of Proposition \ref{sympred}]
We will show that $F_B-\frac{i}{2}[\Theta,\Theta^*]\omega_{vol}$ is a moment map on $\Omega^1(S^2/\Gamma; \f/\ttt)\times C^\infty(S^2/\Gamma, E(\Gamma))$ induced by the action of $\G^{F,\Gamma}$. We need to check the two properties of a moment map.

Let  $Y: S^2/\Gamma \to \f/\ttt$ be in $\g^{F,\Gamma}$, and let $Y^\sharp$ be the vector field on $\Omega^1(S^2/\Gamma; \f/\ttt)\times C^\infty(S^2/\Gamma, E(\Gamma))$ generated by $Y$. Then $Y^\sharp(B, \Theta)$ is given by $$\frac{d}{dt}|_{t=0}(B+\exp(tY)d_B\exp(-tY),\exp(tY)\Theta \exp(-tY))=(-d_B Y,[Y,\Theta]).$$

Hence, we have $$\iota_{Y^\sharp}\omega_{(B,\Theta)}(B',\Theta')=$$$$\int_{S^2/\Gamma}
\tr(-d_B Y\wedge B')-\int_{S^2/\Gamma}Im\langle[Y,\Theta],\Theta'\rangle\omega_{vol}=$$$$\int_{S^2/\Gamma}
\tr([Y,B]\wedge B'-dY\wedge B')-\int_{S^2/\Gamma}Im\langle [Y,\Theta],\Theta'\rangle \omega_{vol}.$$ Meanwhile, let $(B_t,\Theta_t)_{t\in[0,1]}$ be a path in $\Omega^1(S^2/\Gamma; \f/\ttt)\times C^\infty(S^2/\Gamma, E(\Gamma))$ such that $(B_0,\Theta_0)=(B,\Theta)$ and $\frac{d}{dt}|_{t=0}(B_t,\Theta_t)=(B',\Theta')$. Then we also have $$d\tilde{\mu}^Y_{1(B,\Theta)}(B',\Theta')=$$$$\frac{d}{dt}|_{t=0}\int_{S^2/\Gamma}\tr( Y\wedge F_{B_t})-\frac{d}{dt}|_{t=0}\int_{S^2/\Gamma}\langle Y,\frac{i}{2} [\Theta_t,\Theta_t^*]\rangle\omega_{vol}=$$$$\frac{d}{dt}|_{t=0}\int_{S^2/\Gamma}\tr( Y\wedge F_{B_t})-\frac{d}{dt}|_{t=0}\int_{S^2/\Gamma}-\frac{i}{2}\langle Y, [\Theta_t,\Theta_t^*]\rangle\omega_{vol}=$$$$\int_{S^2/\Gamma}\tr( Y\wedge (dB'+B'\wedge B+B\wedge B'))-\int_{S^2/\Gamma}-\frac{i}{2}\langle Y, [\Theta',\Theta^*]+[\Theta,\Theta'^{*}]\rangle\omega_{vol}.$$
Hence, we have $\iota_{Y^\sharp}\omega_{(B,\Theta)}(B',\Theta')=d\tilde{\mu}^{Y}_{1(B,\Theta)}(B',\Theta')$. 

We also need to check the equivariance condition, that is, $\tilde{\mu}_1\circ \psi_\rho=Ad^*_\rho \circ \tilde{\mu}_1$. Let  $\rho$ be an element in the unitary gauge group $\G^{F,\Gamma}$, and let  $$\psi_\rho: \Omega^1(S^2/\Gamma; \f/\ttt)\times C^\infty(S^2/\Gamma, E(\Gamma)) \to \Omega^1(S^2/\Gamma; \f/\ttt)\times C^\infty(S^2/\Gamma, E(\Gamma))$$ be the diffeomorphism on the configuration space induced by $\rho$. We have $$\tilde{\mu}_1\circ \psi_\rho (B,\Theta)=F(B+\rho d_B \rho^{-1})-\frac{i}{2}[\rho\Theta\rho^{-1},(\rho\Theta\rho^{-1})^*]\omega_{vol}.$$ Meanwhile, $$Ad^*_\rho \circ \tilde{\mu}_1(B,\Theta)=\rho F_B \rho^{-1}-\frac{i}{2}\rho[\Theta,\Theta^*]\rho^{-1}\omega_{vol}.$$ Since the gauge action on curvature is conjugation and $\rho^{-1}=\rho^*$, we have the desired equality $$\tilde{\mu}_1\circ \psi_\rho(B,\Theta)=Ad^*_\rho \circ \tilde{\mu}_1(B,\Theta).$$ 
\end{proof}

\section{Hyperk{\"a}hler structure on $C^\infty(S^2/\Gamma,E(\Gamma))$}

\subsection{The quaternions}
Let $SU(2)$ denote the 2-dimensional special unitary group. Explicitly,  $SU(2)=\{\gamma\in Gl_2(\C)| \gamma=\begin{pmatrix}
u & v \\
-v^* & u^* 
\end{pmatrix}, |u|^2+|v|^2=1\}$. Note that $u^*$ refers to complex conjugation.

Below, we will set up another piece of convention, that is, to endow $\C^2$ with a right $\HH$-module structure.  Write $(z_1,z_2)$ for a point in $\C^2$, where $I,J,K$ act on $\C^2$ as follows: $I(z_1,z_2)=(iz_1,iz_2)$, $J(z_1,z_2)=(-z_2^*,z_1^*)$, $K(z_1,z_2)=(-iz_2^*,iz_1^*)$. 

Note we also have $SU(2)$ acting on the right on $\C^2$: Let $\gamma \in SU(2)$, $\gamma=
\begin{pmatrix}
u & v \\
-v^* & u^* 
\end{pmatrix}$,
then $$J((z_1,z_2)\gamma)=J(uz_1-v^*z_2,vz_1+u^*z_2)=(-v^*z_1^*-uz_2^*,u^*z_1^*-vz_2^*),$$ and $$(J(z_1,z_2))\gamma=(-z_2^*,z_1^*)\gamma=(-v^*z_1^*-uz_2^*,u^*z_1^*-vz_2^*),$$ so the $SU(2)$-action commutes with the $J$-action. Hence, the $SU(2)$-action on $\C^2$ commutes with all the $I$-, $J$-, $K$-actions.

If we restrict the actions to $S^3$ thought of as the unit quaternions, then we make the following observations:

\begin{lem}
The $S^1$-action on $S^3$ coming from the dual Hopf fibration commutes with $I$, and for $p \in S^3$, $g\in S^1$, $J(pg)=J(p)g^*$, $K(pg)=K(p)g^*$.
\end{lem}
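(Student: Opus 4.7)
The plan is to verify each of the three claims by a direct, coordinate-based computation using the explicit formulas for the $S^1$-action and for the quaternion operators $I$, $J$, $K$ given just above the lemma. Write $p=(z_1,z_2)\in S^3$ and $g=e^{i\theta}\in S^1$, so that $pg=(e^{-i\theta}z_1,e^{-i\theta}z_2)$; since the $S^1$-action sends $p$ to $pg^{-1}$ componentwise (modulo the convention that $g$ itself scales by $g^{-1}=g^*$), applying the action by $g^*$ instead scales each coordinate by $e^{i\theta}$, i.e.\ $pg^*=(e^{i\theta}z_1,e^{i\theta}z_2)$.

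For the first claim, $I$ acts by componentwise multiplication by $i$, and complex scalars commute, so $I(pg)=(ie^{-i\theta}z_1,ie^{-i\theta}z_2)=I(p)\cdot g$. This gives the commutativity of $I$ with the $S^1$-action.

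The heart of the remaining two identities is that $J$ and $K$ both involve complex conjugation of the coordinates, and conjugation converts $e^{-i\theta}$ into $e^{i\theta}$. For $J$, I would compute
$$J(pg)=(-(e^{-i\theta}z_2)^*,(e^{-i\theta}z_1)^*)=(-e^{i\theta}z_2^*,e^{i\theta}z_1^*),$$
and observe that this agrees with $J(p)\cdot g^*=(-z_2^*,z_1^*)\cdot g^*$, since multiplying by $g^*$ scales each coordinate by $e^{i\theta}$. The argument for $K$ is essentially identical, with an extra factor of $i$ in each component that commutes through the scalar.

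There is no real obstacle here; the lemma is a short unwinding of definitions, and its role is to pin down how the right $\HH$-module structure on $\C^2$ interacts with the Hopf $S^1$-action. The conceptual point worth recording is that while $I$ is $\C$-linear and hence commutes with the $S^1$-action, the antilinear parts of $J$ and $K$ force the action to be twisted by the conjugation $g\mapsto g^*$ when passed through them—this sign discrepancy is exactly what will later make $J$ and $K$ descend nontrivially to the associated bundle $E(\Gamma)$ rather than trivially like $I$.
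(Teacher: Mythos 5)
Your proof is correct and is exactly the direct coordinate computation the lemma calls for (the paper itself states the lemma without proof, as it follows immediately from the explicit formulas for $I$, $J$, $K$ and the $S^1$-action given just before it). The key observation — that the antilinear parts of $J$ and $K$ conjugate the scalar $e^{-i\theta}$ into $e^{i\theta}$, which is precisely the action of $g^*$ — is the right one.
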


\begin{lem}
Consider $S^3$ as the principal $S^1$-bundle via the dual Hopf fibration. Let $\pi: S^3 \to \C P^1$ be the projection map where $\pi(z_1,z_2)=[z_1:z_2]$. Then $I$ acts as the identity and $J$, $K$ act as the natural involution on the base $\C P^1$ given by $\tau: \C P^1\to \C P^1$, $[z_1:z_2]\mapsto[-z_2^*:z_1^*]$.
\end{lem}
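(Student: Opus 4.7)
The plan is a direct computation using the explicit formulas for $I$, $J$, $K$ on $\C^2 \cong \HH$ together with the homogeneous-coordinate description of $\pi$. The first step is to confirm that each of $I$, $J$, $K$ descends to a well-defined self-map of $\C P^1 = S^3/S^1$. By the preceding lemma, $I$ commutes with the $S^1$-action, so it clearly descends; and since $J(pg) = J(p)g^{*}$ and $K(pg) = K(p)g^{*}$ with $g^{*} \in S^1$, these two also send each $S^1$-fiber into a single $S^1$-fiber, so they too descend to maps on the quotient $\C P^1$.

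With well-definedness in hand, I would finish by computing $\pi \circ I$, $\pi \circ J$, $\pi \circ K$ using the formulas $I(z_1,z_2) = (iz_1, iz_2)$, $J(z_1,z_2) = (-z_2^{*}, z_1^{*})$, $K(z_1,z_2) = (-iz_2^{*}, iz_1^{*})$. For $I$, one has $\pi(I(z_1,z_2)) = [iz_1 : iz_2] = [z_1 : z_2]$, so $I$ descends to the identity. For $J$, $\pi(J(z_1,z_2)) = [-z_2^{*} : z_1^{*}]$, which is precisely $\tau([z_1:z_2])$ by definition. For $K$, $\pi(K(z_1,z_2)) = [-iz_2^{*} : iz_1^{*}]$, and scaling the homogeneous coordinates by $i^{-1}$ gives $[-z_2^{*} : z_1^{*}] = \tau([z_1:z_2])$.

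There is no genuine obstacle here; this lemma is a routine verification. The only care required is consistency with the right-action conventions established earlier (both for $\HH$-module structure on $\C^2$ and for the dual Hopf $S^1$-action on $S^3$), which have been fixed throughout the subsection and match the formulas above.
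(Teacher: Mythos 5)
Your computation is correct and is exactly the routine verification this lemma calls for (the paper itself states it without proof); the descent step via the preceding lemma and the three homogeneous-coordinate calculations $[iz_1:iz_2]=[z_1:z_2]$, $[-z_2^*:z_1^*]$, and $[-iz_2^*:iz_1^*]=[-z_2^*:z_1^*]$ are all consistent with the conventions fixed in the subsection. Nothing is missing.
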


\subsection{Quaternionic structures on associated bundles and spaces of sections}
We have previously introduced the bundle $E(V)$ and $E(\Gamma)$. In this subsection, we introduce quaternionic structures on these bundles and their spaces of sections. 

We begin with $E(V)$. Notice that we can define the quaternion actions on $E(V)$ in the following way: $$I[p,v]=[-I(p),v]=[p,iv],$$ $$J[p,v]=[J(p),v^*],$$ $$K[p,v]=[-K(p),v^*],$$ with $[p,v] \in E(V).$ 

It's straightforward to check that the $I$-, $J$-, $K$-actions defined above satisfy the properties for quaternion actions. Hence, we have equipped $E(V)$ with a quaternionic structure.

Now, we move on to $E(\Gamma)$. In the previous subsection, we have shown that the $\Gamma$-action and the $J$-action commute on $\C^2$. Observe that we have that the $\Gamma$-action commutes with the quaternion actions on the level of $E(V)$ as well; more precisely, we have that $$J(\gamma[p,v])=J[ p\gamma,\gamma^{-1} v]=J[ p\gamma, R(\gamma^{-1})vR(\gamma)]$$$$=[J(p\gamma), (R(\gamma^{-1})vR(\gamma))^*]=[J(p)\gamma, R(\gamma)^*v^*R(\gamma^*)^*]$$$$=[J(p)\gamma, R(\gamma^*)v^*R(\gamma)]=[J(p)\gamma, R(\gamma^{-1})v^*R(\gamma)]=\gamma(J[p,v]),$$ given that $\gamma \in SU(2)$ and $R:\Gamma \to U(R)\subset End(R)$ is the regular representation. 

Hence, the quaternion actions descend to $E(\Gamma)$. We remark that the $J$- and $K$-actions on $E(V)$ and $E(\Gamma)$ act on the base by $\tau$ which we have introduced previously.

\begin{prop}
The map $I: E(V) \to E(V)$ is an isometry with respect to the hermitian metric on $E(V)$, and $J, K: E(V) \to E(V)$ are skew-isometries in the sense that $\langle J[p,v_1],J[p,v_2]\rangle=\overline{\langle v_1,v_2\rangle}$, $\langle K[p,v_1],K[p,v_2]\rangle=\overline{\langle v_1,v_2\rangle}$, for $[p,v_1],[p,v_2]\in E(V)_x$, for all $x\in S^2$.
\end{prop}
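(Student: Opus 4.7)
The plan is to reduce each assertion to a fiberwise computation by unpacking the three quaternionic actions against the pointwise hermitian metric, and then to invoke the sesquilinearity of $h_V$ together with the trace formula $h_V(A,B)=\tr(AB^*)$ on $V=End(R)$. No deep input is required — essentially this is a bookkeeping exercise over the equivalence relation defining $E(V)$.

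I would begin with $I$, which is essentially tautological. Using the identity $I[p,v]=[p,iv]$, both $I[p,v_1]$ and $I[p,v_2]$ sit in the same fiber $E(V)_x$, so
\[ \langle I[p,v_1],I[p,v_2]\rangle_x = h_V(iv_1,iv_2) = i\cdot\overline{i}\cdot h_V(v_1,v_2) = h_V(v_1,v_2) \]
by sesquilinearity. For $J$, the mild bookkeeping point is that $J$ covers the antipodal involution $\tau$ on $S^2$ (as recorded in the preceding lemma), so both $J[p,v_1]$ and $J[p,v_2]$ land in $E(V)_{\tau(x)}$, and the pairing must be computed there. Unwinding the definition, $\langle J[p,v_1],J[p,v_2]\rangle_{\tau(x)} = h_V(v_1^*,v_2^*) = \tr(v_1^* v_2)$, and by the cyclic property of the trace this equals $\overline{\tr(v_1 v_2^*)} = \overline{h_V(v_1,v_2)}$, which is exactly the skew-isometry identity. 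The $K$ case is structurally identical: $K[p,v]=[-K(p),v^*]$ involves the same fiber map $v\mapsto v^*$ and $K$ also covers $\tau$, so the same calculation applies verbatim.

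The hardest part — to the extent there is one — is only the base-point bookkeeping across $\tau$ and the implicit check that the formulas $J[p,v]=[J(p),v^*]$ and $K[p,v]=[-K(p),v^*]$ are well-defined on the quotient space $E(V)$; but this was effectively handled when the quaternionic structure was introduced earlier in the subsection. The genuine content of the proposition is simply that the adjoint operation $v\mapsto v^*$ on $End(R)$ is conjugate-linear with respect to the Hilbert-Schmidt pairing, which is immediate from $(v^*)^*=v$ and cyclicity of the trace.
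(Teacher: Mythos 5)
Your proof is correct and is exactly the routine fiberwise check the paper itself omits (the paper states it "leaves out the proofs ... as they involve simply using and checking the properties of quaternionic actions"); the key points — that $J$ and $K$ cover $\tau$ on the base and that $v\mapsto v^*$ is conjugate-linear for the Hilbert--Schmidt pairing via $\tr(v_1^*v_2)=\overline{\tr(v_1v_2^*)}$ — are all handled properly. The only cosmetic remark is that the last identity uses $\overline{\tr(A)}=\tr(A^*)$ in addition to cyclicity of the trace, but this does not affect the validity of the argument.
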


From here, by pullbacks, we can make the spaces of sections $C^\infty(E(V))$ and $C^\infty(E(\Gamma))$ into right $\HH$-modules. We will focus on $C^\infty(E(\Gamma))$ here but the statements for $C^\infty(E(V))$ are exactly the same. 

\begin{prop}\label{hkstruc}
The space of sections $C^\infty(E(\Gamma))$ of $E(\Gamma)$ is an infinite-dimensional right $\HH$-module with the following quaternion actions: for $\Theta$ a section of $E(\Gamma)$, we identify $\Theta$ with a map $\lambda:S^3\to End(R)$ equivariant with respect to the $S^1$- and $\Gamma$-action, and we define that for $\Theta: x \mapsto [p,\lambda(p)]$, $$I\Theta: x \mapsto [p,i\lambda(p)],$$ $$J\Theta: x \mapsto [p,-\lambda(J(p))^*],$$ $$K\Theta: x \mapsto [p,\lambda(K(p))^*],$$
 where $J(p)$ and $K(p)$ are the usual $J$-, $K$- actions on $S^3$. 
 \end{prop}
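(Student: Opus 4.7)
The plan is to first check that the three prescriptions really do yield well-defined sections of $E(\Gamma)$ when applied to $\Theta \in C^\infty(E(\Gamma))$, and then to verify the quaternionic relations $I^2 = J^2 = K^2 = -1$ together with $IJ = K$ (the other two products then follow formally). The equivariance check is the formal part; the compatibility of $J$ and $K$ with the fiberwise adjoint creates most of the bookkeeping.

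For well-definedness I would work with the identification of $\Theta$ with an equivariant map $\lambda: S^3 \to End(R)$, so that $\lambda(pg) = g^{-1}\lambda(p)$ for $g \in S^1$ and $\lambda(p\gamma) = R(\gamma^{-1})\lambda(p)R(\gamma)$ for $\gamma \in \Gamma$. The $I$-action is immediate, since $i$ is a complex scalar and commutes through both equivariances. For $J$, one uses the earlier lemma $J(pg) = J(p)g^*$ together with $g^* = g^{-1}$ on $S^1$ to push the scalar through $\lambda$; taking the adjoint then turns the remaining $g$ into $\bar{g} = g^{-1}$, exactly matching the $S^1$-equivariance required of $J\Theta$. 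The $\Gamma$-equivariance uses $J(p\gamma) = J(p)\gamma$ (since $\Gamma \subset SU(2)$ commutes with $J$) together with $R(\gamma)^* = R(\gamma^{-1})$; the $K$-case is identical in form. In the case $-1 \in \Gamma$ with $r(-1) \ne -1$, one additionally checks that $V_1$ is stable under $v \mapsto v^*$, which follows from $r(-1)(v^*) = R(-1)v^*R(-1)^{-1} = (R(-1)^{-1}vR(-1))^* = (r(-1)v)^*$, so the adjoint commutes with $r(-1)$.

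The relations $I^2 = J^2 = K^2 = -1$ reduce to $I^2, J^2, K^2 = -1$ on $S^3$ plus the computation $\lambda(-p) = (-1)^{-1}\lambda(p) = -\lambda(p)$ from $S^1$-equivariance, together with the cancellation of double adjoints. For $IJ = K$, I would use the explicit formulas $J(z_1,z_2) = (-z_2^*, z_1^*)$ and $K(z_1,z_2) = (-iz_2^*, iz_1^*)$ to verify that $K(p) = J(p) \cdot (-i)$ under the paper's $S^1$-convention; then $S^1$-equivariance of $\lambda$ gives $\lambda(K(p)) = i\,\lambda(J(p))$, whose adjoint yields $\lambda(K(p))^* = -i\,\lambda(J(p))^*$, which is precisely $I(J\Theta)$ evaluated at $p$. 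The main obstacle I anticipate is bookkeeping: the mixture of the paper's $S^1$-convention (which does \emph{not} coincide with naive right quaternion multiplication), the conjugate-linearity of the adjoint, and the conjugation action on $End(R)$ means that a careless sign can destroy the argument. Once the relation $K(p) \in J(p) \cdot S^1$ is correctly extracted from the coordinate formulas, each quaternionic identity collapses to a one-line $S^1$-equivariance plus adjoint computation.
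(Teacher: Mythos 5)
Your proposal is correct, and it carries out exactly the routine verification the paper explicitly omits ("we leave out the proofs... as they involve simply using and checking the properties of quaternionic actions"): well-definedness via $\lambda(pg)=g^{-1}\lambda(p)$, $J(pg)=J(p)g^{*}$, and the conjugate-linearity of the adjoint, followed by the quaternion relations via $J^2(p)=K^2(p)=-p$ and the key identity $K(p)=J(p)\cdot(-i)$, all of which check out against the paper's conventions (including the adjoint-stability of $V_1$ when $-1\in\Gamma$). The only wrinkle --- that your verified relation is $K=I\circ J$ as operators, which is the left-module composition convention despite the paper calling $C^\infty(E(\Gamma))$ a right $\HH$-module --- is a convention ambiguity already present in the paper's own treatment of $\C^2$, not a gap in your argument.
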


We leave out the proofs for the above propositions as they involve simply using and checking the properties of quaternion actions. Also, Proposition \ref{hkstruc} holds for the space of sections $C^\infty(E(V))$ of $E(V)$ with appropriate modifications of adjectives. 

\subsection{Hyperk{\"a}hler structure on the space of sections $C^\infty(E(\Gamma))$}
With the previous observations involving the quaternion actions, we are now ready to introduce the hyperk{\"a}hler structure on the space of sections $C^\infty(E(\Gamma))$ that will be relevant to the construction. We cite \cite{hitchin-supersymm} for the inspiration of this subsection. We remark that the same analysis below will give rise to hyperk{\"a}hler structures of $C^\infty(E(V))$ as well; in fact, we can even replace the regular representation $R$ with any complex $\Gamma$-representation $S$ and obtain a hyperk{\"a}hler structure on $C^\infty(E(End(S)))$, as we use no specific properties of the regular representation $R$ for defining the hyperk{\"a}hler structure.

Recall that in the previous subsection, we have that for $\Theta: x \mapsto [p,\lambda(p)]$, the action of $J$ on $\Theta$ is such that $$J\Theta: x \mapsto [p,-\lambda(J(p))^*],$$ where $J(p)$ is the usual $J$-action on $S^3$. 

We now give the proof of Proposition \ref{hkmetric}. 

\begin{proof}[Proof of Proposition \ref{hkmetric}]
We focus on $\omega_3$. First we make the observation that   $$\omega_3(\Theta_1,\Theta_2)=\int_{S^2/\Gamma}-Im\langle J\Theta_1, \Theta_2\rangle\omega_{vol}=\int_{S^2/\Gamma}-Im\langle -J\Theta_2,\Theta_1\rangle\omega_{vol}$$$$=\int_{S^2/\Gamma}-Im\langle J\Theta_2,\Theta_1\rangle\tau^*\omega_{vol}=-\omega_3(\Theta_2,\Theta_1).$$ Indeed, for $\Theta_1: x \mapsto [p, \lambda_1(p)]$ and $\Theta_2: x \mapsto [p, \lambda_2(p)]$, we have $$\langle J\Theta_1, \Theta_2\rangle_x=\tr(-\lambda_1(J(p))^*\lambda_2(p)^*)$$ and $$\langle -J\Theta_2,\Theta_1\rangle_x=\tr(\lambda_2(J(p))^*\lambda_1(p)^*)=\tr(\lambda_1(p)^*\lambda_2(J(p))^*).$$Since $J$ acts on $S^2/\Gamma$ by $\tau$ which has the property that $\tau^*\omega_{vol}=-\omega_{vol}$, we have the desired equality after integration. This gives $\omega_3$ the skew-symmetric property of a symplectic form. The same can be shown for $\omega_2$. The properties of $\omega_2$ and $\omega_3$ being closed and non-degenerate are obvious. We hence can also write down the compatible hyperk{\"a}hler metric $g_h$ on $C^\infty(E(\Gamma))$: $$g_h(\Theta_1,\Theta_2)=\int_{S^2/\Gamma}Re\langle\Theta_1,\Theta_2\rangle\omega_{vol},$$ and it is evident that $g_h$ is compatible with the complex structures and the symplectic forms.

\end{proof}

Next, we want to justify the two additional moment map equations, \ref{mu2} and \ref{mu3}. To start with, we make the observation that for $\Theta: x \mapsto [p, \lambda(p)]$ and $Y:S^2/\Gamma \to \f/\ttt$ an element in $\g^{F,\Gamma}$, we have $$Y\Theta-\Theta Y: x \mapsto [p,Y(x)\lambda(p)-\lambda(p)Y(x)]$$ and $$J(Y\Theta-\Theta Y): x\mapsto [p,-\lambda(J(p))^*Y(\tau(x))^*+Y(\tau(x))^*\lambda(J(p))^*].$$Thus, we can think of $J(Y\Theta-\Theta Y)=[J\Theta,(\tau^*Y)^*]$, where $\tau$ denotes the involution we have introduced previously. Meanwhile, for $YJ\Theta-J\Theta Y$, we have $$YJ\Theta-J\Theta Y: x \mapsto [p, -Y(x)\lambda(J(p))^*+\lambda(J(p))^*Y(x)].$$ Hence, for $Y: S^2/\Gamma \to \f/\ttt$ invariant under $\tau$, that is, $Y(x)=Y(\tau(x)), \forall x \in S^2/\Gamma$, we have 
\begin{equation}\label{tau}
J(Y\Theta-\Theta Y)=[J\Theta,(\tau^*Y)^*]=[J\Theta,-Y]=[Y,J\Theta]=YJ\Theta-J\Theta Y.
\end{equation}

\begin{prop}\label{khmomap}
The action of the $\tau$-invariant subgroup $\G^{F,\Gamma}_\tau$ of $\G^{F,\Gamma}$  on $C^\infty(E(\Gamma))$ is Hamiltonian with respect to the symplectic forms $\omega_2$ and $\omega_3$ and gives rise to the following moment maps: $$\tilde{\mu}_2:   C^\infty(S^2/\Gamma, E(\Gamma))\to \Omega^2(S^2/\Gamma;\f/\ttt),$$ $$\Theta \mapsto -\frac{1}{4}([J\Theta,\Theta^*]-[\Theta,J\Theta^*])\omega_{vol},$$ and $$\tilde{\mu}_3:   C^\infty(S^2/\Gamma, E(\Gamma))\to \Omega^2(S^2/\Gamma;\f/\ttt),$$ $$\Theta \mapsto -\frac{i}{4}([J\Theta,\Theta^*]+[\Theta,J\Theta^*])\omega_{vol}.$$

\end{prop}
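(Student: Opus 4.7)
The plan is to verify the two defining properties of a moment map for each of $\tilde{\mu}_2$ and $\tilde{\mu}_3$, following the same strategy used in the proof of Proposition~\ref{sympred} for $\tilde{\mu}_1$. The fundamental vector field generated by $Y \in \g^{F,\Gamma}_\tau$ is still $Y^\sharp(\Theta) = [Y, \Theta]$, since the gauge action is by conjugation and does not depend on the complex structure. The essential new input is the identity (\ref{tau}), which states $J[Y, \Theta] = [Y, J\Theta]$ precisely when $Y$ is $\tau$-invariant; this is exactly what allows the $J$-twisted computation to go through and is why the gauge group must be restricted to the $\tau$-invariant subgroup.

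For the Hamiltonian condition on $\omega_3$, I would compute
$$\iota_{Y^\sharp}\omega_3(\Theta') = \int_{S^2/\Gamma} -Im\langle J[Y,\Theta], \Theta'\rangle \omega_{vol} = \int_{S^2/\Gamma} -Im\langle [Y, J\Theta], \Theta'\rangle \omega_{vol}$$
by applying (\ref{tau}), then expand pointwise via $\langle \cdot, \cdot\rangle = \tr(\cdot\,\cdot^*)$ and use the cyclic property of trace to rewrite the integrand in the form $-Im\,\tr(Y[J\Theta, {\Theta'}^{*}])$. Meanwhile, differentiating the explicit formula for $\tilde{\mu}_3^Y(\Theta)$ along a path with velocity $\Theta'$ produces four commutator terms, namely $\tr(Y[J\Theta', \Theta^{*}])$, $\tr(Y[J\Theta, {\Theta'}^{*}])$, $\tr(Y[\Theta', J\Theta^{*}])$, and $\tr(Y[\Theta, J{\Theta'}^{*}])$. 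Using the identity $J(\Theta^{*}) = (J\Theta)^{*}$ observed in the preceding subsection, a second application of (\ref{tau}), and the skew-isometry property $\langle JA, JB\rangle = \overline{\langle A, B\rangle}$ of $J$ established at the start of this section, these four terms pair into complex-conjugate sums whose imaginary parts collapse to exactly the integral arising from $\iota_{Y^\sharp}\omega_3$. The verification for $\omega_2$ is entirely parallel, with $Re$ replacing $Im$ and the overall factor of $i$ adjusted accordingly.

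The equivariance condition $\tilde{\mu}_j \circ \psi_\rho = Ad^*_\rho \circ \tilde{\mu}_j$ for $j = 2, 3$ follows the same template as in Proposition~\ref{sympred}. Since $\rho \in \G^{F,\Gamma}_\tau$ is unitary and acts on sections by conjugation, we have $(\rho\Theta\rho^{-1})^{*} = \rho\Theta^{*}\rho^{-1}$, and since $J$ acts only on the bundle fiber direction it commutes with conjugation, giving $J(\rho\Theta\rho^{-1}) = \rho(J\Theta)\rho^{-1}$. Substituting into the moment map formulas and using $\rho[A, B]\rho^{-1} = [\rho A\rho^{-1}, \rho B\rho^{-1}]$ yields the coadjoint equivariance. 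The principal obstacle I anticipate is the trace bookkeeping in the Hamiltonian condition: the four commutator terms produced by differentiation must be collapsed into one via a careful interplay of cyclic trace identities, the skew-isometry of $J$, and repeated use of (\ref{tau}). Additional care is needed because $J$ covers $\tau$ on $S^2/\Gamma$, so absorbing a $J$ into a section can induce a $\tau$-pullback on the base; these are precisely compensated by $\tau^{*}\omega_{vol} = -\omega_{vol}$ together with the $\tau$-invariance of $Y$.
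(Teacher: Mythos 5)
Your proposal is correct and follows essentially the same route as the paper: the key input in both is the identity (\ref{tau}) for $\tau$-invariant $Y$, followed by matching the contraction $\iota_{Y^\sharp}\omega_3$ against the four commutator terms produced by differentiating $\tilde{\mu}_3^Y$ along a path, with equivariance deferred to the argument of Proposition \ref{sympred}. The only difference is organizational: the paper evaluates $\iota_{Y^\sharp}\omega_3$ twice (once directly, once via the skew-symmetry of $\omega_3$) and sums to produce all four terms at once, whereas you collapse the four terms into one by conjugate pairing — these are equivalent bookkeeping choices.
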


\begin{proof}
Again, we first focus on $\omega_3$. Similar to the proof of Proposition 3.1, we let $Y: S^2/\Gamma \to \f/\ttt$ be a $\tau$-invariant element in $\g^{F,\Gamma}$ and let $Y^\sharp$ denote the vector field  on $C^\infty(E(\Gamma))$ induced by $Y$. 

Now, let's compute $\iota_{Y^\sharp}{\omega_3}_\Theta(\Theta')$. We have $$\iota_{Y^\sharp}{\omega_3}_\Theta(\Theta')=\int_{S^2/\Gamma}-Im\langle J[Y,\Theta], \Theta'\rangle\omega_{vol}=\int_{S^2/\Gamma}-Im\langle J(Y\Theta-\Theta Y), \Theta'\rangle\omega_{vol}.$$ Hence,  by \ref{tau}, we have
$$\int_{S^2/\Gamma}-Im\langle J(Y\Theta-\Theta Y), \Theta'\rangle\omega_{vol}=$$$$\int_{S^2/\Gamma}-Im\langle [J\Theta,Y^*], \Theta'\rangle\omega_{vol}=\int_{S^2/\Gamma}Im\tr([Y^*,J\Theta] \Theta'^{*})\omega_{vol}$$$$=\int_{S^2/\Gamma}\frac{i}{2}\tr([\Theta',J\Theta^*]Y^*+[J\Theta,\Theta'^{*}]Y^*)\omega_{vol}$$$$=\int_{S^2/\Gamma}\frac{i}{2}(\langle[\Theta',J\Theta^*],Y\rangle+\langle[J\Theta,\Theta'^{*}],Y\rangle)\omega_{vol}.$$

Meanwhile, by the skew-symmetric property of $\omega_3$, we also have 
$$\int_{S^2/\Gamma}-Im\langle J[Y,\Theta], \Theta'\rangle\omega_{vol}=\int_{S^2/\Gamma}-Im\langle -J\Theta', [Y,\Theta]\rangle\omega_{vol}$$
$$=\int_{S^2/\Gamma}Im \tr([Y,\Theta^*]J\Theta')\omega_{vol}=\int_{S^2/\Gamma}\frac{i}{2}\tr([J\Theta'^*,\Theta]Y+[\Theta^*,J\Theta']Y)\omega_{vol}$$$$=\int_{S^2/\Gamma}\frac{i}{2}(\langle[\Theta,J\Theta'^*],Y\rangle+\langle [J\Theta',\Theta^*],Y\rangle)\omega_{vol}.$$

Now, we obtain the following:
$$2\iota_{Y^\sharp}{\omega_3}_\Theta(\Theta')=$$$$\int_{S^2/\Gamma}\frac{i}{2}\langle[\Theta',J\Theta^*]+[\Theta,J\Theta'^*],Y\rangle\omega_{vol}+\int_{S^2/\Gamma}\frac{i}{2}\langle [J\Theta,\Theta'^*]+[J\Theta',\Theta^*],Y\rangle\omega_{vol}.$$

On the other hand, let $\Theta_t$ with $t\in[0,1]$ be a path in $ C^\infty(S^2/\Gamma, E(\Gamma))$ such that $\Theta_0=\Theta$ and $\frac{d}{dt}|_{t=0}\Theta_t=\Theta'$. Then we have $$d\tilde{\mu}^Y_{3_\Theta}(\Theta')=\frac{d}{dt}|_{t=0}\int_{S^2/\Gamma}-\langle Y,\frac{i}{4}([J\Theta_t,\Theta_t^*]+[\Theta_t,J\Theta_t^*])\rangle\omega_{vol}$$$$=\int_{S^2/\Gamma}-\langle Y,\frac{i}{4}([J\Theta',\Theta^*]+[J\Theta,\Theta'^*])\rangle\omega_{vol}+\int_{S^2/\Gamma}-\langle Y,\frac{i}{4}([\Theta',J\Theta^*]+[\Theta,J\Theta'^*])\rangle\omega_{vol}.$$

The above computations verify that $$ \tilde{\mu}_3(\Theta)=-\frac{i}{4}([J\Theta,\Theta^*]+[\Theta,J\Theta^*])\omega_{vol}.$$ 

By very similar computations, we also get that for $$\omega_2(\Theta_1,\Theta_2)=\int_{S^2/\Gamma}Re \langle J\Theta_1, \Theta_2\rangle \omega_{vol},$$ we have $$\tilde{\mu}_2(\Theta)=-\frac{1}{4}([J\Theta,\Theta^*]-[\Theta,J\Theta^*])\omega_{vol}.$$

We leave out the proof for the equivariance condition as it is essentially the same as that of Proposition \ref{sympred}.

\end{proof}

\begin{rmk}
\begin{enumerate}
\item Note, here we need to restrict the gauge group action to the $\tau$-invariant subgroup $\G^{F,\Gamma}_\tau$ which is different from the previous setup.

\item
Observe that $\frac{i}{4}([J\Theta,\Theta^*]+[\Theta,J\Theta^*])$ and $\frac{1}{4}([J\Theta,\Theta^*]-[\Theta,J\Theta^*])$ are both $\tau$-invariant and hence the new moment maps map into the correct space. 
\end{enumerate}
\end{rmk}

\begin{lem}\label{matrixid}
If $\Theta$ is holomorphic with respect to a fixed holomorphic structure on $E(\Gamma)$ and is identified with a pair of matrices $(\alpha,\beta)$, then $J\Theta=J(\alpha,\beta)=(-\beta^*,\alpha^*)$.
\end{lem}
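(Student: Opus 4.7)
The plan is to unwind both sides of the claimed identity in terms of the defining $S^1$-equivariant map $\lambda \colon S^3 \to End(R)$ associated with $\Theta$, and then carry out a short substitution in the homogeneous coordinates $(z_1,z_2)$.

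First I would make the identification $\Theta \leftrightarrow (\alpha,\beta)$ explicit. Since $E(V)=\bigoplus_i E(\C\cdot e_i)$ is an orthogonal direct sum of copies of the hyperplane bundle $H=E(\C)$, and $\Theta$ is assumed holomorphic with respect to the fixed holomorphic structure (the one induced by the Chern connection $A_0$ on $H$ together with the canonical holomorphic structure on the fibre $End(R)$), the equivariant map $\lambda$ extends to a holomorphic, $S^1$-equivariant map on $\C^2\setminus\{0\}$. The equivariance $\lambda(pg)=g^{-1}\lambda(p)$ with $g=e^{i\theta}$, combined with holomorphicity, forces $\lambda$ to be a degree-one homogeneous polynomial, so $\lambda(z_1,z_2)=\alpha z_1+\beta z_2$ for some $\alpha,\beta \in End(R)$. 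This is exactly the pair $(\alpha,\beta)$ of the statement, and the $\Gamma$-equivariance of $\lambda$ is equivalent to Kronheimer's relations \eqref{1}--\eqref{2}, placing $(\alpha,\beta)$ in $M=P^\Gamma$.

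Next I apply the definition of $J$ from Proposition \ref{hkstruc}, namely $(J\Theta)(x)=[p,-\lambda(J(p))^*]$ with $J(z_1,z_2)=(-z_2^*,z_1^*)$ on $S^3$. A direct substitution gives $\lambda(J(p))=-\alpha z_2^*+\beta z_1^*$; taking minus the Hermitian adjoint, and using that scalars commute with matrices, yields $-\lambda(J(p))^*=-\beta^* z_1+\alpha^* z_2$. Reading this back through the same linear-polynomial identification, the corresponding pair of matrices is $(-\beta^*,\alpha^*)$, which is the desired formula.

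The only potentially subtle point is the first step: one must verify that the holomorphic structure on $E(\Gamma)$ is compatible with the linear-polynomial identification, so that holomorphic sections correspond exactly to elements $\alpha z_1 + \beta z_2 \in Q \otimes End(R)$. Once this is granted --- essentially the standard statement that degree-one homogeneous polynomials in $(z_1,z_2)$ form the space of global holomorphic sections of $H$ --- the lemma reduces to the short substitution above, and no further analysis is required.
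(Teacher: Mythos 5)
Your proposal is correct and follows essentially the same route as the paper's proof: identify $\Theta$ with the $S^1$-equivariant map $\lambda$, use holomorphicity to write $\lambda(z_1,z_2)=z_1\alpha+z_2\beta$, and substitute $J(z_1,z_2)=(-z_2^*,z_1^*)$ into $-\lambda(J(p))^*$ to read off $(-\beta^*,\alpha^*)$. The extra justification you give for why $\lambda$ is a degree-one homogeneous polynomial is a slightly more explicit version of the paper's one-line assertion, but the argument is the same.
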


\begin{proof}
As before, we express $\Theta$ as $\Theta: x\mapsto [p,\lambda(p)]$, where $\lambda: S^3\to End(R)$ is $S^1$- and $\Gamma$-equivariant. Since $\Theta$ is holomorphic, $\lambda$ can be extended to a complex linear map $\lambda: \C^2\to End(R)$. Hence, $\lambda$ can be thought of as a pair of matrices $(\alpha,\beta)$ such that $\lambda(z_1,z_2)=z_1\alpha+z_2\beta$, for $(z_1,z_2)\in \C^2$. 

On the other hand, we have $J\Theta: x\mapsto [p,-\lambda(J(p))^*]$. This give us $$-\lambda(J(z_1,z_2))^*=-\lambda(-z_2^*,z_1^*)^*=-(-z_2^*\alpha+z_1^*\beta)^*=-z_1\beta^*+z_2\alpha^*.$$ This precisely says that $J\Theta$ reduces to $(-\beta^*,\alpha^*)$.
\end{proof}

\begin{rmk}
\begin{enumerate}
\item
Provided with the previous lemma, we observe that for $\tilde{\mu}_3$, we have $$\frac{i}{4}([J\Theta,\Theta^*]+[\Theta,J\Theta^*])$$$$=\frac{i}{4}([-\beta^*,\alpha^*]+[\alpha^*,\beta^*]+[\alpha,-\beta]+[\beta,\alpha])$$$$=\frac{i}{4}(2[\alpha^*,\beta^*]-2[\alpha,\beta])=\frac{i}{2}([\alpha^*,\beta^*]-[\alpha,\beta]),$$ but this is precisely the third moment map $\mu_3$ in Kronheimer's setup \cite{kronheimer}; similar calculations show that $\tilde{\mu}_2$ also reduces to $\mu_2$ in Kronheimer's setup \cite{kronheimer}. This observation will become a key element in the proof of Theorem 3.6. 

\item We remark that the same analysis presented in this section will give rise to hyperk{\"a}hler structures to $C^\infty(E(End(S)))$ and $C^\infty(E(End(S))_r^\Gamma)$ if we replace the regular representation $R$ with any $\Gamma$-representation $r$ on $S$ with an appropriately chosen hermitian structure to obtain a hyperk{\"a}hler structure on $C^\infty(E(End(S)))$ and $C^\infty(E(End(S))_r^\Gamma)$ , as we use no specific properties of the regular representation $R$ for defining the hyperk{\"a}hler structure. 

\end{enumerate}
\end{rmk}

\section{Uniqueness theorems}

In this section, we analyze both the unitary gauge group action and the complex gauge group action on the configuration space $\A^F\times C^\infty(E(\Gamma))$. In particular, we prove two uniqueness theorems: the first one states that any solution to \ref{holo} and \ref{eq1} lying in $\A^F\times C^\infty(E(\Gamma))$ that are $\G^{F,\Gamma}_\C$-equivalent are also $\G^{F,\Gamma}$-equivalent, which is a standard occurrence in gauge theory. The second uniqueness theorem can be thought of as a corollary of the first one, which states that any solution to \ref{holo} -- \ref{mu3} lying in $\A_\tau^F\times C^\infty(E(\Gamma))$ that are $\G^{F,\Gamma}_{\tau,\C}$-equivalent must also be $\G^{F,\Gamma}_\tau$-equivalent.

\begin{lem}\label{transaction}
Up to automorphisms of $E(\Gamma)$, the space $\A^F$ defines a single holomorphic structure on $E(\Gamma)$,  identifying $E(\Gamma)$ with the direct sum of hyperplane bundles holomorphically. 
\end{lem}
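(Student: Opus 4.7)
The plan is to exploit the fact that $\A^F$ is, by definition, the complex gauge orbit of the base connection $A_0$: every $A = A_0 + B \in \A^F$ satisfies $B = \kappa^{*}\partial\kappa^{*-1} + \kappa^{-1}\bar\partial\kappa$ for some $\kappa \in \G^{F,\Gamma}_\C$. The first step is to pass to $(0,1)$-parts. Using the remark that $\bar\partial_{A_0+B} = \bar\partial_{A_0} + B^{0,1}$ and noting that $\kappa^{*}\partial\kappa^{*-1}$ is of type $(1,0)$, one obtains
$$\bar\partial_A \;=\; \bar\partial_{A_0} + \kappa^{-1}\bar\partial\kappa \;=\; \kappa^{-1}\circ \bar\partial_{A_0}\circ \kappa,$$
the last equality being the standard identity describing the complex gauge action on Dolbeault operators. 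This exhibits $\kappa$ as an intertwiner, i.e.\ as a holomorphic bundle isomorphism from $(E(\Gamma),\bar\partial_A)$ to $(E(\Gamma),\bar\partial_{A_0})$.

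Next, I would argue that $\kappa$ does descend to a genuine automorphism of $E(\Gamma)$. By the definition of $\G^{F,\Gamma}_\C = \mathrm{Map}(S^2/\Gamma, F^c/\C^{*})$, the transformation $\kappa$ is $\Gamma$-equivariant and $S^1$-invariant in exactly the way needed to induce a complex-linear automorphism of the orbifold bundle $E(\Gamma)$ (as opposed to merely of $E(V)$). Consequently the two holomorphic structures $\bar\partial_A$ and $\bar\partial_{A_0}$ are related by an automorphism of $E(\Gamma)$, so $\A^F$ determines a single holomorphic structure up to such automorphism.

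It then remains to identify this distinguished holomorphic structure with a direct sum of hyperplane bundles. For this I would invoke Example~2: the base connection $A_0$ on $E(V)$ is induced from the $SU(2)$-invariant Chern connection on $H = E(\C)$, and under the decomposition $R = \bigoplus_i \C^{n_i}\otimes R_i$ into isotypic components, $E(V) = E(\mathrm{End}(R))$ splits holomorphically as an orthogonal direct sum of copies of $H$. Because $A_0$ is $\Gamma$-invariant (being $SU(2)$-invariant) and the isotypic decomposition is preserved by $\Gamma$, this splitting is $\Gamma$-equivariant and passes to a holomorphic direct-sum decomposition of $E(\Gamma)$ into hyperplane bundles. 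Combining with the previous paragraph, every $A\in \A^F$ identifies $E(\Gamma)$ holomorphically, via $\kappa$, with $\bigoplus_i H_i$.

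The main obstacle here is not analytic but bookkeeping-theoretic: one must verify that the complex gauge transformation $\kappa$ genuinely descends to an automorphism of the orbifold bundle $E(\Gamma)$ compatibly with the $\Gamma$-equivariant structure, and that the hyperplane decomposition of $E(V)$ truly restricts to a holomorphic decomposition of $E(\Gamma)$ (rather than only to a smooth one). Both points reduce to the definition of $F$ as the centralizer of the $\Gamma$-action and the commutation between the Hopf $S^1$-action and the $\Gamma$-action established in Section~4.1, but they are easy to overlook and deserve explicit checking.
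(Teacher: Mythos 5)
Your proof is correct and takes essentially the same route as the paper's, which simply observes that $\A^F$ is by definition the complex gauge orbit of $A_0$, whose Chern connection realizes the holomorphic splitting of $E(\Gamma)$ into hyperplane bundles. Your write-up additionally supplies the intertwining identity $\bar\partial_A = \kappa^{-1}\circ\bar\partial_{A_0}\circ\kappa$ and the equivariance/descent checks that the paper leaves implicit, but the underlying argument is identical.
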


\begin{proof}

By construction, $A_0$ is taken to be the Chern connection giving rise to the holomorphic structure on $E(\Gamma)$ such that $E(\Gamma)$ splits holomorphically as a direct sum of hyperplane bundles. As $\A^F$ is simply defined to be the complex orbit containing $A_0$, we must have that $\A^F$ defines a single holomorphic structure identifying $E(\Gamma)$ with the direct sum of hyperplane bundles holomorphically, as stated in the lemma.

\end{proof}

\begin{lem}\label{transaction2}
The based complex gauge group acts freely on $\A^F$, and the stabilizer of $B$ in the complex gauge group is isomorphic to the constant subgroup. 
\end{lem}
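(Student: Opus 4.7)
The plan is to identify the stabilizer of a connection in $\A^F$ under the complex gauge group with the group of holomorphic automorphisms of the induced holomorphic bundle, and then exploit the affineness of $F^c/\C^*$ together with the compactness of $\C P^1$ to force every such automorphism to be constant.

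First I would unpack the stabilizer condition: for $\kappa \in \G^{F,\Gamma}_\C$, the complex gauge action on $A_0$ produces $A_0 + \kappa^*\partial\kappa^{*-1} + \kappa^{-1}\bar{\partial}\kappa$, and separating into $(1,0)$- and $(0,1)$-parts, this equals $A_0$ precisely when $\bar{\partial}\kappa = 0$ (the $(1,0)$-condition follows automatically, since $\kappa^*$ is antiholomorphic whenever $\kappa$ is holomorphic). Hence the stabilizer of $A_0$ in $\G^{F,\Gamma}_\C$ is exactly the group of $\kappa$ that are holomorphic with respect to the fixed holomorphic structure on $H$. Since $\A^F$ is by definition the $\G^{F,\Gamma}_\C$-orbit of $A_0$, the stabilizer of any $A_0+B \in \A^F$ is an isomorphic conjugate of $\mathrm{Stab}(A_0)$, so it suffices to compute the latter.

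The key point is then that every holomorphic map $\kappa: S^2/\Gamma \to F^c/\C^*$ must be constant. Such a $\kappa$ lifts to a $\Gamma$-invariant holomorphic map $\C P^1 \to F^c/\C^*$. Now $F^c = \prod_i GL_\C(n_i)$ is a linear algebraic group and $\C^*$ sits as a central closed subgroup, so the quotient $F^c/\C^*$ is again an affine algebraic group, admitting a closed embedding into some $\C^N$ (for instance via the adjoint representation). Any holomorphic map from the compact Riemann surface $\C P^1$ into $\C^N$ is constant by Liouville's theorem applied coordinate-wise, and if the image lies in $F^c/\C^*$ it is constant there. Therefore $\mathrm{Stab}(A_0)$ coincides with the subgroup of constant maps, visibly isomorphic to $F^c/\C^*$, proving that the stabilizer of every $B$ in the complex gauge group is isomorphic to the constant subgroup.

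For freeness of the based action, any $\kappa \in \G^{F,\Gamma}_{0,\C}$ in $\mathrm{Stab}(B)$ can be written as $\kappa_0 c \kappa_0^{-1}$ for some constant $c \in F^c/\C^*$ and some $\kappa_0 \in \G^{F,\Gamma}_\C$ carrying $A_0$ to $A_0+B$; the basepoint condition $\kappa(x_0) = 1$ then forces $c$ to be the conjugate of the identity, hence trivial, and thus $\kappa \equiv 1$. The main subtlety I anticipate is justifying the affineness of $F^c/\C^*$ and carefully handling the passage between orbifold holomorphic maps on $S^2/\Gamma$ and $\Gamma$-invariant ones on $S^2$, but both reduce to standard results in algebraic group theory and orbifold complex geometry respectively.
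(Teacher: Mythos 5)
The paper does not actually prove this lemma: it is one of the statements whose proofs are ``omitted \ldots as the proofs can be found in or follow from standard references,'' so there is no in-paper argument to compare against. Your proof supplies the standard argument and is essentially correct and complete: reducing the stabilizer condition to $\bar{\partial}\kappa=0$, observing that the holomorphy of $\kappa^{*-1}$ in the $(1,0)$-part is automatic, conjugating the stabilizer of a general $A_0+B$ back to that of $A_0$ because $\A^F$ is by definition a single complex orbit, and then killing all non-constant holomorphic maps by compactness of $\C P^1$ and affineness of the target; the based-gauge freeness then follows exactly as you say. The one inaccuracy is your parenthetical suggestion that the adjoint representation gives a closed embedding of $F^c/\C^*$: the adjoint representation of $F^c=\prod_i GL_\C(n_i)$ on its own Lie algebra has kernel the full center $\prod_i \C^*$, not just the diagonal $\C^*$, so it is not faithful on $F^c/\C^*$ once the Dynkin diagram has more than one node. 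The correct faithful affine representation is the conjugation action of $F^c$ on $V=End(R)$, whose kernel is exactly the scalars --- this is precisely how the paper realizes $F/T$ and $F^c/\C^*$ inside $GL_\C(V)$ --- or one can simply invoke the general fact that the quotient of an affine algebraic group by a closed normal subgroup is again affine. With that substitution your argument goes through unchanged, and the passage between orbifold maps on $S^2/\Gamma$ and $\Gamma$-invariant maps on $\C P^1$ that you flag is indeed routine here since the automorphism bundle is trivial.
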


\begin{rmk} The two preceding lemmas can both be formulated where we replace $\A^F$ with $\A^F_\tau$ and use the corresponding $\tau$-invariant gauge groups.
\end{rmk}

\begin{defn} Let $Q$ be the canonical $2$-dimensional representation of $SU(2)$. Let $Hom(Q,V)^{\Gamma}$ denote the $\Gamma$-invariant subset of $Hom(Q,V)$, consisting of all maps that commute with the $\Gamma$-actions on $Q$ and $V$, that is, for $f \in Hom(Q,V)$,$f(\gamma(z))=\gamma(f(z))$, where $\gamma \in \Gamma$ and $z \in Q$. 
\end{defn}

\begin{lem}\label{holosecid}
The space $Hom(Q,V)$ is isomorphic to the space of holomorphic sections of $E(V)$ with respect to $A_0$. The space $Hom(Q,V)^\Gamma$ is isomorphic to the space of holomorphic sections of $E(\Gamma)$ with respect to $A_0$. 
\end{lem}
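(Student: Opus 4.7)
The plan is to reduce the statement to the classical identification of holomorphic sections of the hyperplane bundle with linear functionals on $Q$, using Lemma \ref{transaction} to split $E(V)$ holomorphically as a direct sum of hyperplane bundles, and then to carry the $\Gamma$-equivariance across this identification.

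First, I would identify each section $\Theta$ of $E(V)$ with its associated $S^1$-equivariant map $\lambda : S^3 \to V$, satisfying $\lambda(pg) = g^{-1}\lambda(p)$. By Lemma \ref{transaction}, the holomorphic structure on $E(V)$ induced by $A_0$ realizes $E(V)$ as an orthogonal direct sum of hyperplane bundles, which reduces the question to the case $V = \C$. There the standard fact I would invoke is that an $S^1$-equivariant map $\lambda : S^3 \to \C$ corresponds to a holomorphic section of the hyperplane bundle precisely when it extends to a holomorphic map $\tilde\lambda : \C^2 \to \C$; the equivariance $\lambda(e^{-i\theta}p) = e^{-i\theta}\lambda(p)$ then forces this extension to be homogeneous of degree one, hence $\C$-linear. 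Summing over components yields the isomorphism from the space of holomorphic sections of $E(V)$ to $Hom(Q,V)$.

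Next, for the $\Gamma$-invariant statement I would use the earlier observation that orbifold sections of $E(\Gamma)$ are precisely the $\Gamma$-invariant sections of $E(V)$, i.e., maps $\lambda$ satisfying in addition $\lambda(p\gamma) = R(\gamma^{-1})\lambda(p)R(\gamma)$ for $V = End(R)$. Under the linear extension $\lambda \mapsto \tilde\lambda$, this condition translates verbatim into the analogous $\Gamma$-equivariance of $\tilde\lambda$ on $Q$, identifying holomorphic sections of $E(\Gamma)$ with $Hom(Q,V)^\Gamma$. The only subtlety is the case in which $-1 \in \Gamma$ acts nontrivially and $E(\Gamma)$ is built from $V_1$ alone: here I would observe that any $\tilde\lambda \in Hom(Q,V)^\Gamma$ automatically takes values in $V_1$, since $-1$ acts as $-1$ on $Q$ and $\C$-linearity combined with $\Gamma$-equivariance forces the image to lie in the $(-1)$-eigenspace of $r(-1)$ on $V$, in exact parallel with the argument showing $P^\Gamma = (Q \otimes V_1)^\Gamma$ in Kronheimer's setup.

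The main technical point I expect is the first step, namely the explicit correspondence between the $\bar\partial_{A_0}$-holomorphic condition and extendability to a $\C$-linear map on $Q$. Lemma \ref{transaction} reduces this to the scalar case, but one still needs the elementary analytic input that a holomorphic $S^1$-equivariant function on $S^3$ of the correct weight extends uniquely and $\C$-linearly to $\C^2$; once this is in place the rest of the argument is a formal translation of the equivariance conditions, and the two statements of the lemma follow in succession.
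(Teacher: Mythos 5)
Your proposal is correct and is exactly the standard argument the paper itself defers to: the paper omits the proof of Lemma \ref{holosecid}, citing references such as \cite{kobayashi2} and \cite{griffiths-harris} for the classical identification of holomorphic sections of $\OO(1)$ with linear forms, which is the content of your reduction to the scalar case and the degree-one homogeneous extension. Your additional check that $\Gamma$-equivariance forces the image into the $(-1)$-eigenspace $V_1$ when $-1\in\Gamma$ acts nontrivially is a correct and worthwhile detail that the paper handles only implicitly via the remark $P^\Gamma=(Q\otimes V_1)^\Gamma$.
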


\begin{rmk}
\begin{enumerate}
\item It is easy to see that $M \cong Hom(Q, V )^\Gamma$, and hence by the previous lemma, we can think of $M$ as the space of holomorphic sections of $E(\Gamma)$ with respect to the fixed connection $A_0$. 
\item The above lemma gives rise to a map $$\Psi: M \to \A^F \times C^\infty(E(\Gamma))$$$$\lambda \mapsto (A_0,\Theta: x\mapsto[p,\lambda(p)]),$$ with the property such that $\Psi$ is an isomorphism onto its image.  In addition, $\Psi$ can be naturally regarded as an isometry onto its image. To see this, we observe that the hyperk{\"a}hler metric $g_h$ given in Proposition 3.6 restricted to the set $\{\Theta\in C^{\infty}(E(\Gamma))| \bar{\partial}_{A_0}\Theta =0\}$ agrees with the natural flat hyperk{\"a}hler metric on $M$. Hence, $\Psi$ is an isometry onto its image.

\item A holomorphic section of $E(\Gamma)$ with respect to the fixed connection $A_0$ can be expressed as a pair of matrices $(\alpha,\beta)$ where $(\alpha,\beta)$ is $\Gamma$-invariant as in \cite{kronheimer}.

\end{enumerate}
\end{rmk}

We omit the proofs for the two preceding lemmas as the proofs can be found in or follow from standard references such as  \cite{kobayashi2}, \cite{kobayashi1} and \cite{griffiths-harris}.

\begin{lem}\label{psimap}
There is a map $$\tilde{\Psi}: M\to \{(A_0+B,\Theta)\in \A^F\times C^{\infty}(E(\Gamma))|\bar{\partial}_{A_0+B}\Theta=0\}/\G^{F,\Gamma}_{0,\C} $$ such that $\tilde{\Psi}$ is an isomorphism, where $M$ comes from Kronheimer's construction in \cite{kronheimer}, and there exists a residual $F^c$ action on both sides with respect to which $\tilde{\Psi}$ is equivariant. 

\end{lem}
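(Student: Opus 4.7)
The plan is to define $\tilde{\Psi}$ as the map sending $\lambda \in M$ to the $\G^{F,\Gamma}_{0,\C}$-equivalence class of $\Psi(\lambda) = (A_0, \Theta)$, where $\Theta: x \mapsto [p, \lambda(p)]$ is the corresponding holomorphic section guaranteed by Lemma \ref{holosecid}. Since $\bar{\partial}_{A_0}\Theta = 0$ by construction, the pair $(A_0,\Theta)$ lies in the set being quotiented, and $\tilde{\Psi}$ is manifestly well-defined. The remaining work is to verify bijectivity and the matching of residual $F^c$-actions, and essentially every input needed has already been isolated in the earlier lemmas of the section.

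For surjectivity, given an arbitrary representative $(A_0+B,\Theta')$ with $\bar{\partial}_{A_0+B}\Theta' = 0$, I would invoke Lemma \ref{transaction2}. Since $\A^F$ is by definition the $\G^{F,\Gamma}_\C$-orbit of $A_0$ and since the stabilizer of any element in the full complex gauge group is the constant subgroup $F^c/\C^*$, decomposing any $\kappa \in \G^{F,\Gamma}_\C$ as a constant times a based gauge transformation shows that $\G^{F,\Gamma}_{0,\C}$ acts freely and transitively on $\A^F$. Hence there exists a unique $\kappa \in \G^{F,\Gamma}_{0,\C}$ with $\kappa^{-1} \cdot (A_0 + B) = A_0$. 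The complex gauge covariance of the $\bar{\partial}$-operator then gives $\bar{\partial}_{A_0}(\kappa^{-1}\Theta'\kappa) = 0$, so $\kappa^{-1}\Theta'\kappa$ is a holomorphic section with respect to $A_0$, and by Lemma \ref{holosecid} it corresponds to some $\lambda \in M$ with $\tilde{\Psi}(\lambda) = [(A_0+B, \Theta')]$.

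For injectivity, if $\tilde{\Psi}(\lambda_1) = \tilde{\Psi}(\lambda_2)$, then there exists $\kappa \in \G^{F,\Gamma}_{0,\C}$ with $\kappa \cdot (A_0, \Theta_1) = (A_0, \Theta_2)$. In particular $\kappa$ stabilizes $A_0$, and freeness of the based complex gauge action forces $\kappa = 1$; hence $\Theta_1 = \Theta_2$, and the injectivity of $\Psi$ yields $\lambda_1 = \lambda_2$. For the residual action, the evaluation-at-basepoint short exact sequence $1 \to \G^{F,\Gamma}_{0,\C} \to \G^{F,\Gamma}_\C \to F^c/\C^* \to 1$ equips the quotient $\{\cdots\}/\G^{F,\Gamma}_{0,\C}$ with a residual $F^c$-action by constant gauge transformations, and a constant $\kappa$ acts on $(A_0, \Theta)$ by $(A_0, \kappa\Theta\kappa^{-1})$; under $\Psi$ this is precisely the conjugation action of $F^c$ on $M$ appearing in Kronheimer's setup.

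The main technical ingredient is the free and transitive action of the based complex gauge group on $\A^F$; this is essentially delivered by Lemma \ref{transaction2} together with the defining orbit description of $\A^F$, so the remainder of the argument is bookkeeping via the complex gauge covariance of $\bar{\partial}_A$ and the identification of residual group actions after quotienting. The only subtle point worth double-checking is that the complex gauge transformation used for surjectivity sends a smooth solution to a smooth solution, but this is automatic because $\kappa$ is itself a smooth map into $F^c/\C^*$.
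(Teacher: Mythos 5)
Your proposal is correct and follows essentially the same route as the paper: the paper defines $\tilde{\Psi}$ as the identical composition $M \to \mathcal{C} \to \mathcal{C}/\G^{F,\Gamma}_{0,\C}$ and appeals to Lemma \ref{transaction2} for the free and transitive action of $\G^{F,\Gamma}_{0,\C}$ on $\A^F$, leaving the bijectivity check and the identification of the residual $F^c$-actions largely implicit. Your write-up simply fills in those details (gauging an arbitrary connection back to $A_0$ for surjectivity, freeness for injectivity, the evaluation sequence for the residual action), all consistent with the paper's intent.
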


\begin{proof}
By Lemma \ref{transaction2}, we know that $\G^{F,\Gamma}_{0,\C}$ acts freely and transitively on the space of connections. Hence, we can take $\tilde{\Psi}$ to be the following composition of maps: let $\mathcal{C}$ denote $\{(A_0+B,\Theta)\in \A^F\times C^{\infty}(E(\Gamma))|\bar{\partial}_{A_0+B}\Theta=0\}$, and consider
$$\tilde{\Psi}: M \to \mathcal{C} \to \mathcal{C}/\G^{F,\Gamma}_{0,\C},$$$$(\alpha,\beta)=\lambda\mapsto (A_0,\Theta: x\mapsto[p,\lambda(p)])\mapsto [(A_0,\Theta: x\mapsto[p,\lambda(p)])],$$ where $[(A_0,\Theta: x\mapsto[p,\lambda(p)])]$ denotes the gauge orbit containing the chosen representative. Previous arguments suggest that $\tilde{\Psi}$ is an isomorphism. It follows naturally that $\tilde{\Psi}$ is equivariant with respect to the residual $F^c$ action on both $M$ and $ \mathcal{C}/\G^{F,\Gamma}_{0,\C}$. 

\end{proof}

\begin{rmk}\label{psimaptau}
We let $\tilde{\Psi}_\tau$ denote the map $$\tilde{\Psi}_\tau:M\to \{(A_0+B,\Theta)\in \A^F_\tau \times C^{\infty}(E(\Gamma))|\bar{\partial}_{A_0+B}\Theta=0\}/\G^{F,\Gamma}_{\tau,0,\C}. $$
We have that $\tilde{\Psi}_\tau$ is again an isomorphism following the same arguments as in the previous lemma.
\end{rmk}

Before proceeding, we set up some linear algebra that will be of use later. Recall that $E(V)$ is the vector bundle associated to $S^3$ on the $\Gamma$-representation
$V = End(R)$. We have the following two maps induced by left and right multiplication on $V$: $$c_l: V \to End(V), c_l(\phi)(\psi)=\phi\circ\psi$$ and $$c_r: V \to End(V), c_l(\phi)(\psi)=\psi\circ\phi.$$ Since both $c_l$ and $c_r$ commute with the $S^1$-action, they give rise to bundle maps: $$c_l,c_r: E(V)\to E(End(V)).$$ Hence, given $\phi,\psi \in E(V)_x$, we have the following composition: $$E(V)_x\otimes E(V)^*_x\to E(End(V))_x\otimes E(End(V))^*_x\to {\underline{End(V)}}_x,$$ $$\phi\otimes\psi^*\mapsto c_l(\phi)\otimes c_l(\psi^*)\mapsto[c_l(\phi),c_l(\psi^*)].$$ On the other hand, we also have $$E(V)_x\otimes E(V)^*_x\to \underline{End(R)}_x\to \underline{End(End(R))}_x=\underline{End(V)}_x,$$ $$\phi\otimes\psi^*\mapsto[\phi,\psi^*]\mapsto c_l([\phi,\psi^*]),$$ where we also have $$[c_l(\phi),c_l(\psi^*)]=c_l([\phi,\psi^*]).$$

Similarly, there are maps such as $$E(End(R))\otimes \underline{End(R)}\to E(End(R)),$$ $$\underline{End(R)}\otimes E(End(R))\to E(End(R)),$$ $$\underline{End(R)}\otimes E(End(R))\otimes\underline{End(R)}\to E(End(R)),$$ modeled locally on maps such as $$End(R)\otimes End(R)\to End(R), \phi\otimes\psi\mapsto \phi\circ\psi.$$

\begin{lem}[Uniqueness theorem 1]\label{uniqueness1}
Let $(B_1,\Theta_1)$ and $(B_2,\Theta_2)$ be two solutions to \ref{holo} and \ref{eq1}  in $\A^F\times C^\infty(E(\Gamma))$ that lie on the same complex orbit, that is, there exists a complex automorphism of $E(\Gamma)$ taking $(B_1,\Theta_1)$ to $(B_2,\Theta_2)$. Then $(B_1,\Theta_1)$ and $(B_2,\Theta_2)$ are unitarily equivalent.
\end{lem}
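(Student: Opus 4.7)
The plan is to adopt the standard polar decomposition strategy used in uniqueness theorems for Hermite--Einstein and Hitchin-type equations. Suppose $g \in \G^{F,\Gamma}_\C$ satisfies $(B_2,\Theta_2) = g\cdot(B_1,\Theta_1)$. Decomposing $g = \rho\, h$ fiberwise with $\rho \in \G^{F,\Gamma}$ unitary and $h$ positive self-adjoint (the square root of $g^*g$), and replacing $(B_2,\Theta_2)$ by $\rho^{-1}\cdot(B_2,\Theta_2)$, it suffices to prove the following: if $(B_1,\Theta_1)$ and $h\cdot(B_1,\Theta_1)$ are both solutions of (\ref{holo})--(\ref{eq1}) for the same $\tilde\zeta_1$, then $h$ stabilizes $(B_1,\Theta_1)$, i.e.\ $h\cdot(B_1,\Theta_1) = (B_1,\Theta_1)$.

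To execute this, write $s = \log h$, a self-adjoint section of the adjoint bundle valued in $\f/\ttt$, and interpolate by $h_t := \exp(ts)$, setting $(\beta_t,\theta_t) := h_t\cdot(B_1,\Theta_1)$. Complex gauge transformations preserve (\ref{holo}), so the entire path satisfies $\bar\partial_{A_0+\beta_t}\theta_t = 0$, and only the moment-map equation (\ref{eq1}) is constraining. From the hypothesis that both endpoints satisfy (\ref{eq1}) one obtains
\[
0 \;=\; \int_{S^2/\Gamma}\Big\langle s,\, \mu_1(\beta_1,\theta_1) - \mu_1(\beta_0,\theta_0)\Big\rangle \;=\; \int_{0}^{1}\!\!\int_{S^2/\Gamma}\Big\langle s,\, \tfrac{d}{dt}\mu_1(\beta_t,\theta_t)\Big\rangle\,dt.
\]
The key is the pointwise nonnegativity of the integrand. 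Differentiating $\mu_1 = F_B - \tfrac{i}{2}[\Theta,\Theta^*]\omega_{vol}$ along the path, using $[s,h_t]=0$ to simplify the variation formulas for the complex gauge action, and integrating by parts via the Chern compatibility of $A_0+\beta_t$, one expects an identity of the form
\[
\Big\langle s,\, \tfrac{d}{dt}\mu_1(\beta_t,\theta_t)\Big\rangle \;=\; \Big(\|\bar\partial_{A_0+\beta_t} s\|^2 + \tfrac{1}{2}\|[s,\theta_t]\|^2 + \tfrac{1}{2}\|[s,\theta_t^*]\|^2\Big)\omega_{vol},
\]
up to positive constants. This is essentially the Hitchin--Simpson convexity formula, adapted to the pair $(B,\Theta)$ on the orbifold bundle $E(\Gamma)$.

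Combining the integral equation with this pointwise identity forces $\bar\partial_{A_0+B_1}s = 0$ and $[s,\Theta_1]=0$, and self-adjointness of $s$ upgrades the first to $d_{A_0+B_1}s = 0$. Thus $s$ is parallel with respect to $A_0+B_1$ and commutes with $\Theta_1$, which is exactly the infinitesimal condition for $h = \exp(s)$ to lie in the stabilizer of $(B_1,\Theta_1)$ under the complex gauge action. Hence $h\cdot(B_1,\Theta_1) = (B_1,\Theta_1)$, so $(B_2,\Theta_2) = \rho\cdot(B_1,\Theta_1)$, which is the desired unitary equivalence. The main obstacle I anticipate is the bookkeeping in the convexity identity: one must verify carefully that passing to the quotient $\f/\ttt$ (killing the scalar subgroup $T$) does not produce boundary terms when integrating by parts, and that the complex gauge action on $\A^F$ expressed through the $\kappa$-parametrization gives precisely the $\partial - \bar\partial$-form of the infinitesimal variation required for the cross-terms to cancel. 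A secondary, minor subtlety is that the conclusion is that $h$ lies in the stabilizer rather than $h = 1$; but since the stabilizer acts trivially on the configuration, this is all that is needed.
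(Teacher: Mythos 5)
Your argument is correct, but it takes a genuinely different route from the paper. The paper follows Hitchin's proof of his Theorem (2.7) directly: it keeps the full complex automorphism $\kappa$ intact (no polar decomposition at the outset), regards it as a section of $End(E(\Gamma))$ intertwining the two solutions, packages the data into a connection $\boldsymbol B=B_1\otimes id-id\otimes B_2^*$ and a section $\boldsymbol\Theta=c_l(\Theta_1)-c_r(\Theta_2)$ of an associated endomorphism bundle with $\bar{\partial}_{\boldsymbol B}\kappa=0$ and $\boldsymbol\Theta\kappa=0$, and then applies a single Weitzenb\"ock/Stokes identity $\int\langle\partial_{\boldsymbol B}\kappa,\partial_{\boldsymbol B}\kappa\rangle=\int\langle F_{\boldsymbol B}\kappa,\kappa\rangle=\int\frac{i}{2}\langle[\boldsymbol\Theta,\boldsymbol\Theta^*]\kappa,\kappa\rangle$; since the right side is $i$ times a nonnegative real quantity while the left side is real and nonnegative, both vanish, and $\rho=\kappa(\kappa^*\kappa)^{-1/2}$ is then the desired unitary equivalence. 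You instead perform the polar decomposition first and run the Kempf--Ness/Donaldson convexity argument along the geodesic $h_t=\exp(ts)$, which, given that Proposition \ref{sympred} establishes the action is Hamiltonian with compatible complex structure, yields $\frac{d}{dt}\int\langle s,\tilde\mu_1(\beta_t,\theta_t)\rangle=\|\hat s(\beta_t,\theta_t)\|^2\ge 0$ abstractly; the centrality of $\tilde\zeta_1$ makes both endpoints of the pairing equal, forcing $\bar\partial_{A_0+B_1}s=0$ and $[s,\Theta_1]=0$, hence $h\in\mathrm{Stab}(B_1,\Theta_1)$. Both are standard and both close the argument. What the paper's route buys is economy: one integration by parts, no logarithm of $h$, no differentiation along a path, and no need to verify the second-variation formula in the $\kappa$-parametrization of $\A^F$; the price is the separate verification that $\ad(\sigma)\kappa=0$ because $\sigma$ is central. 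Your route makes the role of the central value transparent (it cancels in the difference of moment maps) and generalizes more readily to existence/properness statements, at the cost of the bookkeeping you yourself flag. One small correction to your write-up: the displayed ``pointwise'' identity for $\langle s,\frac{d}{dt}\tilde\mu_1\rangle$ is not pointwise --- it holds only after integration over $S^2/\Gamma$, since the honest pointwise formula carries an exact $2$-form term of the shape $\bar\partial\langle\partial_{\beta_t}s,s\rangle$ that Stokes' theorem kills; since you integrate anyway, this does not affect the conclusion.
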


\begin{proof}
This proof is modeled on Hitchin's proof of Theorem (2.7) in \cite{hitchin}. Let $\kappa: E(\Gamma) \to E(\Gamma)$ be the complex automorphism satisfying $ \Theta_1\kappa=\kappa\Theta_2$ and $\bar{\partial}_{B_1}\kappa=\kappa\bar{\partial}_{B_2}$. We also have $$\bar{\partial}_{A_0+B_1}\Theta_1=\bar{\partial}_{A_0+B_2}\Theta_2=0$$ and $$F_{B_1}-\frac{i}{2}[\Theta_1,\Theta_1^*]\omega_{vol}=F_{B_2}-\frac{i}{2}[\Theta_2,\Theta_2^*]\omega_{vol}=\sigma.$$ Now we define two bundles: let $$W=End(E(\Gamma))\cong E(\Gamma)\otimes E(\Gamma)^*,$$ and let $$W^\circ=E(End(V))^\Gamma.$$ We remark that both $W$ and $W^\circ$ have the same fibers isomorphic to $End(V)$, but $W$ is a trivial bundle whereas $W^\circ$ is again an associated bundle of $S^3$. We can think of $\kappa$ as a section of $W$.
We also have that  $\Theta_1$ and $\Theta_2$ together define a section $$\boldsymbol\Theta=c_l(\Theta_1)-c_r(\Theta_2)$$ of $W^\circ$, and $B_1$ and $B_2$ together define a connection $$\boldsymbol B=B_1\otimes id-id\otimes B_2^{*}$$ on both $W$ and $W^\circ$, as $End(W)$ and $End(W^\circ)$ are both isomorphic to $End(End(V))^\Gamma$. As we have $$\kappa\Theta_1=\Theta_2\kappa,$$ we must have that $$\boldsymbol\Theta\kappa=(c_l(\Theta_1)-c_r(\Theta_2))\kappa=0.$$

We observe that  the pair $(\boldsymbol B, \boldsymbol\Theta)$ satisfies the equations $$\bar{\partial}_{\boldsymbol B}\boldsymbol\Theta=0$$ and $$F_{\boldsymbol B}-\frac{i}{2}[\boldsymbol\Theta,\boldsymbol\Theta^*]\omega_{vol}=\ad(\sigma)$$  on $W^\circ$, where $[\boldsymbol\Theta,\boldsymbol\Theta^*]=c_l([\Theta_1,\Theta_1^*])-c_r([\Theta_2,\Theta_2^*])$.

To proceed, we now think of $\kappa$ as a holomorphic section of $W$ with respect to $\boldsymbol B$, that is, $\bar{\partial}_{\boldsymbol{B}}\kappa=0$, as $\bar{\partial}_{B_1}\kappa=\kappa\bar{\partial}_{B_2}$. Before continuing further, we first prove a useful identity. Consider $$\bar{\partial}\langle \partial_{\boldsymbol B}\kappa,\kappa\rangle=\langle \bar{\partial}_{\boldsymbol B}\partial_{\boldsymbol B}\kappa,\kappa\rangle-\langle \partial_{\boldsymbol B}\kappa,\partial_{\boldsymbol B}\kappa\rangle.$$ Since $F_{\boldsymbol B}=\bar{\partial}_{\boldsymbol B}\partial_{\boldsymbol B}+\partial_{\boldsymbol B}\bar{\partial}_{\boldsymbol B}$ and $\bar{\partial}_{\boldsymbol B}\kappa=0$, we have $$\bar{\partial}\langle \partial_{\boldsymbol B}\kappa,\kappa\rangle=\langle F_{\boldsymbol B}\kappa,\kappa\rangle-\langle \partial_{\boldsymbol B}\kappa,\partial_{\boldsymbol B}\kappa\rangle.$$ Now we integrate on both sides and get
$$\int_{S^2/\Gamma}\bar{\partial}\langle \partial_{\boldsymbol B}\kappa,\kappa\rangle+\int_{S^2/\Gamma}\langle \partial_{\boldsymbol B}\kappa,\partial_{\boldsymbol B}\kappa\rangle=\int_{S^2/\Gamma}\langle F_{\boldsymbol B}\kappa,\kappa\rangle,$$ and by Stokes' theorem, we get $$0 \leq \int_{S^2/\Gamma}\langle \partial_{\boldsymbol B}\kappa,\partial_{\boldsymbol B}\kappa\rangle =\int_{S^2/\Gamma}\langle F_{\boldsymbol B}\kappa,\kappa\rangle.$$

Hence, we have $$\int_{S^2/\Gamma}\langle \partial_{\boldsymbol B}\kappa,  \partial_{\boldsymbol B}\kappa\rangle=\int_{S^2/\Gamma}\langle F_{\boldsymbol B}\kappa,\kappa\rangle=$$$$\int_{S^2/\Gamma}\frac{i}{2}\langle[\boldsymbol\Theta,\boldsymbol\Theta^*]\kappa,\kappa\rangle\omega_{vol}-\int_{S^2/\Gamma}\langle\ad(\sigma)\kappa,\kappa\rangle.$$ Since $\sigma$ takes values in the center $Z$, we have that $\kappa$ commutes with $\sigma$, i.e., $\ad(\sigma)\kappa=0$, and hence the following equation $$-\int_{S^2/\Gamma}\langle\ad(\sigma)\kappa,\kappa\rangle=0$$ holds as $\sigma\otimes1(\kappa)=1\otimes\sigma^T(\kappa)$, which can be shown using  essentially the same arguments as in showing $\boldsymbol\Theta\kappa=0$.

As we have shown that $\boldsymbol\Theta \kappa=0$, we also obtain $$\langle[\boldsymbol\Theta,\boldsymbol\Theta^*]\kappa,\kappa\rangle=\langle \boldsymbol\Theta\boldsymbol\Theta^*\kappa,\kappa\rangle=\langle \boldsymbol\Theta^*\kappa,\boldsymbol\Theta^*\kappa\rangle\geq 0$$ and hence must be purely real. Consequently, $\frac{i}{2}\langle[\boldsymbol\Theta,\boldsymbol\Theta^*]\kappa,\kappa\rangle$ must be purely imaginary, so it must be $0$. This gives us that $\partial_{\boldsymbol{B}}\kappa=0$.

Putting everything together, we have $\partial_{\boldsymbol B}\kappa=\bar{\partial}_{\boldsymbol B}\kappa=0$, $\boldsymbol\Theta \kappa=\boldsymbol\Theta^*\kappa=0$. Let $\rho=\kappa(\kappa^*\kappa)^{-\frac{1}{2}}$ then we must have $d_{\boldsymbol B}\rho=0$. Since $\boldsymbol\Theta \kappa=\boldsymbol\Theta^*\kappa=0$, we have $\kappa^*\Theta_2=\Theta_1\kappa^*$ and $\kappa\Theta_2=\Theta_1\kappa$, which implies $\rho\Theta_2=\Theta_1\rho$. Hence, we obtain the desire statement that $(B_1,\Theta_1)$ and $(B_2,\Theta_2)$ lie on the same unitary gauge orbit.

\end{proof}

\begin{cor}[Uniqueness theorem 2]\label{uniqueness2}
Let $(B_1,\Theta_1)$ and $(B_2,\Theta_2)$ be two solutions to \ref{holo} -- \ref{mu3} in $\A^F_\tau\times C^\infty(E(\Gamma))$ that lie on the same $\G^{F,\Gamma}_{\tau,\C}$-orbit, that is, there exists a complex automorphism of $E(\Gamma)$ in $\G^{F,\Gamma}_{\tau,\C}$ that takes $(B_1,\Theta_1)$ to $(B_2,\Theta_2)$. Then $(B_1,\Theta_1)$ and $(B_2,\Theta_2)$ lie on the same $\G^{F,\Gamma}_\tau$-orbit.
\end{cor}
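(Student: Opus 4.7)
The plan is to reduce Corollary \ref{uniqueness2} to the already-proved Uniqueness Theorem 1 (Lemma \ref{uniqueness1}) and then check that the unitary gauge transformation produced there is automatically $\tau$-invariant. Concretely, I would first observe that the hypotheses of Corollary \ref{uniqueness2} are strictly stronger than those of Lemma \ref{uniqueness1}: any solution to (\ref{holo})--(\ref{mu3}) in $\A_\tau^F \times C^\infty(E(\Gamma))$ is in particular a solution to (\ref{holo}) and (\ref{eq1}) lying in $\A^F \times C^\infty(E(\Gamma))$, and an element of $\G^{F,\Gamma}_{\tau,\C}$ is an element of $\G^{F,\Gamma}_{\C}$. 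So Lemma \ref{uniqueness1} immediately produces a unitary gauge transformation $\rho \in \G^{F,\Gamma}$ with $\rho\cdot(B_1,\Theta_1)=(B_2,\Theta_2)$.

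The only remaining task is to verify $\rho \in \G^{F,\Gamma}_\tau$. For this I would revisit the construction in the proof of Lemma \ref{uniqueness1}: starting from a complex gauge transformation $\kappa$ relating the two solutions, one forms $\rho = \kappa(\kappa^*\kappa)^{-1/2}$. Under the hypothesis of Corollary \ref{uniqueness2}, we may take this $\kappa$ to lie in $\G^{F,\Gamma}_{\tau,\C}$, i.e.\ $\kappa(\tau(x))=\kappa(x)$ for all $x \in S^2/\Gamma$. Each operation appearing in the polar-type formula for $\rho$ is pointwise: the pointwise adjoint $\kappa^*$, the pointwise product $\kappa^*\kappa$, and the pointwise positive square root (via functional calculus on the positive self-adjoint endomorphism $\kappa^*\kappa$) all commute with precomposition by $\tau$. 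Consequently $\rho(\tau(x)) = \rho(x)$, so $\rho \in \G^{F,\Gamma}_\tau$, as required.

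The only step that requires any care is confirming that the $\kappa$ supplied by the hypothesis may be used in the construction of $\rho$ without losing $\tau$-invariance. This is essentially automatic since Lemma \ref{uniqueness1}'s recipe is purely algebraic and pointwise; no averaging or non-canonical choice is introduced. I therefore expect no genuine obstacle: the corollary follows from Lemma \ref{uniqueness1} together with the observation that the polar decomposition respects the $\tau$-invariance condition.
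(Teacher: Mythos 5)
Your proposal is correct and follows essentially the same route as the paper: take the $\tau$-invariant complex gauge transformation $\kappa$ supplied by the hypothesis, apply the construction of Lemma \ref{uniqueness1} to get $\rho=\kappa(\kappa^*\kappa)^{-1/2}$, and observe that $\rho$ inherits $\tau$-invariance from $\kappa$. Your justification that each step of the polar decomposition is pointwise and hence commutes with precomposition by $\tau$ is a worthwhile elaboration of the paper's one-line assertion.
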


\begin{proof}
Let $\kappa$ be such a complex automorphism. By the same arguments as in the previous lemma, we can modify $\kappa$ and obtain a unitary gauge element $\rho=\kappa(\kappa^*\kappa)^{\frac{1}{2}}$ that also sends $(B_1,\Theta_1)$ to $(B_2,\Theta_2)$. We must also have that $\rho$ is $\tau$-invariant as $\kappa$ is $\tau$-invariant. Hence, $\rho$ lies in $\G^{F,\Gamma}_\tau$.
\end{proof}

Before we proceed to the next section, we prove the following proposition which analyzes the stabilizer group of a holomorphic section $\Theta$.

\begin{prop}\label{trivstab}
If $\Theta$ has trivial stabilizer in $Stab(B)$ with $\bar{\partial}_{A_0+B}\Theta=0$, then $\Theta$ has trivial stabilizer in $\G^{F,\Gamma}$.
\end{prop}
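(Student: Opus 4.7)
The plan is to show that any $\rho\in\G^{F,\Gamma}$ stabilizing $\Theta$ must in fact also stabilize $B$; the conclusion then follows immediately from the hypothesis. First I would use Lemma \ref{transaction2} to choose $\kappa\in\G^{F,\Gamma}_{0,\C}$ with $\kappa\cdot A_0 = A_0+B$, and set $\Theta_0:=\kappa^{-1}\Theta\kappa$. Then $\Theta_0$ is holomorphic with respect to $A_0$, so by Lemma \ref{holosecid} it corresponds to an element $(\alpha,\beta)\in M$. Given $\rho\in\G^{F,\Gamma}$ with $\rho\Theta\rho^{-1}=\Theta$, the element $\tilde\rho:=\kappa^{-1}\rho\kappa\in\G^{F,\Gamma}_\C$ satisfies $\tilde\rho\Theta_0\tilde\rho^{-1}=\Theta_0$ pointwise on $S^2/\Gamma$.

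The heart of the argument is to show that $\tilde\rho$ reduces to a constant element of $F^c/\C^*\subset\G^{F,\Gamma}_\C$. Since $\Theta_0$ extends to a $\C$-linear $\Gamma$-equivariant map $\lambda:\C^2\to End(R)$ by Lemma \ref{holosecid}, the pointwise commutation $\tilde\rho(x)\lambda(p)=\lambda(p)\tilde\rho(x)$ for $x\in S^2/\Gamma$ and $p\in\pi^{-1}(x)$ is a strong rigidifying constraint. Combining this with the Liouville-type principle that holomorphic sections of the trivial complex-group bundle $\underline{F^c/\C^*}$ over $\C P^1$ are constant, and using the unitarity of the original $\rho$ to bridge from the smooth to the holomorphic category, I expect $\tilde\rho$ to be globally constant. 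Once $\tilde\rho = c\in F^c/\C^*$ is constant, the unitarity $\rho^*\rho=1$ together with the polar-decomposition argument employed in the proof of Lemma \ref{uniqueness1} (writing $\rho' = \rho(\rho^*\rho)^{-1/2}$ and tracking $\kappa^*\kappa$) forces $c$ to lie in $F/T$, so $\rho$ coincides with a constant element of $Stab(B)$; since $\rho$ also stabilizes $\Theta$, the hypothesis gives $\rho = 1$.

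The main obstacle is the constancy step: pointwise commutation with a section of a bundle does not on its own force a smooth gauge transformation to be constant, so some additional rigidity is needed. The proof will have to combine carefully the holomorphicity of $\Theta_0$ (which gives its extension to a $\C$-linear map $\C^2\to End(R)$), the $\Gamma$- and $\tau$-equivariance properties, and the unitarity of $\rho$, in the spirit of the integration-by-parts/Stokes' theorem argument used in the proof of Lemma \ref{uniqueness1}, in order to promote the local commutation constraint into the required global rigidity.
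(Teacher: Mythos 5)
Your overall reduction is reasonable as far as it goes: conjugating by $\kappa$ to reduce to the base connection $A_0$ and identifying $\Theta_0=\kappa^{-1}\Theta\kappa$ with a pair $(\alpha,\beta)\in M$ is exactly how the paper's proof begins, and the proposition would indeed follow if you could show that every $\rho\in\G^{F,\Gamma}$ stabilizing $\Theta$ is (the $\kappa$-conjugate of) a constant. But the mechanism you propose for that constancy step does not work, and you have flagged the difficulty yourself without resolving it. A Liouville-type argument requires $\tilde\rho$ to be holomorphic, and nothing makes it so: $\rho$ is only a smooth unitary gauge transformation constrained to commute pointwise with $\Theta$, and there is no equation satisfied by $\rho$ that would let you run the Stokes'/integration-by-parts argument of Lemma \ref{uniqueness1} (there the key curvature identity comes from $\kappa$ intertwining two solutions of the moment map equation \ref{eq1}; here no such equation is available). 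More fundamentally, if the pointwise stabilizer of $\lambda(p)=z_1\alpha+z_2\beta$ in $F/T$ were positive-dimensional at each point, one could choose a genuinely non-constant $\rho$ taking values in those pointwise stabilizers, so constancy cannot be forced by any soft global principle; it has to come from proving that the pointwise stabilizers are themselves trivial.

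That pointwise linear algebra is the actual content of the proposition and is absent from your proposal. The paper shows: if the pair $(\alpha,\beta)$ has trivial stabilizer under the constant $F/T$-action (which is the hypothesis, since $Stab(B)$ is the $\kappa$-conjugate of the constant subgroup), then for each point $(z_1,z_2)$ the single endomorphism $z_1\alpha+z_2\beta$ already has trivial stabilizer in $F/T$, away from a thin set handled by continuity; a gauge transformation commuting with $\Theta$ is then pointwise the identity. This step uses the $\Gamma$-invariance relations $R(\gamma^{-1})\alpha R(\gamma)=u\alpha+v\beta$ and $R(\gamma^{-1})\beta R(\gamma)=-v^*\alpha+u^*\beta$ in an essential way, with a case split: when $\Gamma$ is not cyclic one solves $\beta=v^{-1}R(\gamma^{-1})\alpha R(\gamma)-v^{-1}u\alpha$ and shows $\bigcap_{\gamma\in\Gamma}\ker(L_\gamma)=0$ for the associated linear maps $L_\gamma$; when $\Gamma$ is cyclic one computes explicitly in the decomposition $M=\bigoplus_{i,j}a_{ij}Hom(\C^{n_i},\C^{n_j})$ and closes the argument by continuity at the exceptional points $(z_1,0)$ and $(0,z_2)$. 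Without some version of this pointwise rigidity your argument cannot be completed.
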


\begin{proof}
Let $\kappa: E(\Gamma)\to E(\Gamma)$ be a complex automorphism on $E(\Gamma)$ taking $A_0$ to $A_0+B$. In other words, we have $B=\kappa^{-1}\bar{\partial}\kappa+\kappa^*\partial \kappa^{*-1}$. Consider $\kappa^{-1}\Theta \kappa$, it is a holomorphic section of $E(\Gamma)$ with respect to $A_0$. Hence, we can rewrite $\kappa^{-1}\Theta \kappa$ as a pair of matrices $(\alpha,\beta)$. The identification is as follows: for $x \in S^2$, $\kappa^{-1}\Theta \kappa: x\mapsto [p,\lambda(p)]$, where $\lambda: S^3\to End(R)$ is given by $\lambda(z_1,z_2)=z_1\alpha+z_2\beta$. 

Since $(\alpha,\beta)$ is $\Gamma$-invariant, we have that for $\gamma=\begin{pmatrix}
u & v \\
-v^* & u^*  
\end{pmatrix}$, the pair $(\alpha,\beta)$ must satisfy \begin{equation}\label{id1}R(\gamma^{-1})\alpha R(\gamma)=u\alpha+v\beta\end{equation} and \begin{equation}\label{id2}R(\gamma^{-1})\beta R(\gamma)=-v^*\alpha+u^*\beta\end{equation}  as in \cite{kronheimer}. Notice that if $v\neq 0$, then $\beta$ is uniquely given by $\beta=v^{-1}R(\gamma^{-1})\alpha R(\gamma)-v^{-1}u\alpha$. On the other hand, if $v=0$ for all $\gamma \in \Gamma$, then it implies that $\Gamma$ is a cyclic subgroup. Hence, we break the proof into two cases.
\smallskip

\emph{Case 1:} $\Gamma$ is not cyclic.

In this case, we have that $v\neq0$ and $\beta=v^{-1}R(\gamma^{-1})\alpha R(\gamma)-v^{-1}u\alpha$. First, we want to show that $(\alpha,\beta)$ has trivial stabilizer in $F/T$ if and only if $\alpha$ has trivial stabilizer in $F/T$. We can assume that $\alpha$ and $\beta$ are both nonzero as by \ref{id1} and \ref{id2}, it's easy to see that if either $\alpha$ or $\beta$ is $0$, then both have to be $0$.  

We observe that $(\alpha,\beta)$ has trivial stabilizer in $F/T$ if and only if $\alpha$ has trivial stabilizer in $F/T$: if $\alpha$ has trivial stabilizer in $F/T$, then clearly $(\alpha,\beta)$ has trivial stabilizer in $F/T$; on the other hand, if some element $f$ stabilizes $\alpha$, then it stabilizes $\beta$ as well by the equality $\beta=v^{-1}R(\gamma^{-1})\alpha R(\gamma)-v^{-1}u\alpha$, so $f$ stabilizes $(\alpha,\beta)$. With the preceding arguments, we can rephrase the assumption that $(\alpha,\beta)$ has trivial stabilizer in $F/T$ as simply that $\alpha$ has trivial stabilizer in $F/T$. 

Now, at a point $p$ thought of as a pair $(z_1,z_2)$, we can use some $\gamma\in\Gamma$ to get the following equality $$f(z_1\alpha+z_2\beta)f^{-1}=f(z_1\alpha-z_2v^{-1}u\alpha+z_2v^{-1}R(\gamma^{-1})\alpha R(\gamma))f^{-1}$$$$=z_1f\alpha f^{-1}-z_2v^{-1}uf\alpha f^{-1}+z_2v^{-1}R(\gamma^{-1})(f\alpha f^{-1})R(\gamma).$$ Assume that we are given $f\alpha f^{-1}\neq \alpha$, for all $f \in F/T$, we want to show that for any pair of points $(z_1,z_2)$ and for all $f\in F/T$, we always have the following: $$z_1\alpha-z_2v^{-1}u\alpha+z_2v^{-1}R(\gamma^{-1})\alpha R(\gamma)\neq z_1f\alpha f^{-1}-z_2v^{-1}uf\alpha f^{-1}+z_2v^{-1}R(\gamma^{-1})(f\alpha f^{-1})R(\gamma).$$ To achieve this end, let $L_\gamma$ be the linear map defined as follows: for a pair $(c,d)\in M$, consider $$L_\gamma: c \mapsto z_1c-z_2v^{-1}uc+z_2v^{-1}R(\gamma^{-1})c R(\gamma).$$ Then we need to show $L_\gamma(\alpha)\neq L_\gamma(f\alpha f^{-1})$. As we know that $\alpha\neq f\alpha f^{-1}$, it suffices to show that $$\bigcap_{\gamma\in \Gamma} \ker(L_\gamma)=0.$$  We can assume that $z_2\neq 0$ as the inequality is clearly satisfied when $z_2=0$. Hence, $c$ lies in the kernel of $L_\gamma$ if $$\frac{z_2v^{-1}u-z_1}{z_2v^{-1}}c=R(\gamma^{-1})c R(\gamma).$$ This implies that $c$ must be a scalar multiple of $d$, that is, $d=q c$; in particular, by applying \ref{id1} and \ref{id2} to the pair $(c,d)$, we must have $(qu+q^2v+v^*-u^*q)c=0$. Notice that this equality must be satisfied for any choice of $\gamma\in\Gamma$ with $v\neq0$, and since $q$ and $c$ are fixed, we see that this equality can only hold when $c=0$. As a result, $z_1\alpha+z_2\beta$ has trivial stabilizer for all $(z_1,z_2)$, which gives us that $(\alpha,\beta)$ has trivial stabilizer in $\G^{F,\Gamma}$.

\smallskip
\emph{Case 2:} $\Gamma$ is cyclic.

When $\Gamma$ is a cyclic subgroup, we can write down $\alpha$ and $\beta$ explicitly and describe the action of $\Gamma$ and $F/T$ explicitly as well. We use the decomposition of $M$ in terms of simply-laced Dynkin diagram given in \cite{kronheimer} and reviewed in Section 2.1: $$M=\bigoplus_{i,j}a_{ij}Hom(\C^{n_i},\C^{n_j}).$$ We also have that $$F=\times_iU(n_i).$$ For the case where $\Gamma$ is cyclic, $n_i=1$ for all $i$, and $$M=(\bigoplus_{i}Hom(\C^{n_i},\C^{n_{i+1}}))\oplus(\bigoplus_{j}Hom(\C^{n_{j+1}},\C^{n_{j}})).$$

We can regard $\alpha \in\bigoplus_{i}Hom(\C^{n_i},\C^{n_{i+1}})$ and $\beta\in\bigoplus_{j}Hom(\C^{n_{j+1}},\C^{n_{j}})$. Hence, we can write $\alpha=(a_1,...,a_n)$ and $\beta=(b_1,...,b_n)$, and $F$ acts on $\C^{n_i}$ and $\C^{n_j}$ by scalar multiplification.

For $(\alpha,\beta)$ to have trivial stabilizer in $F/T$, we must have that for all $i\in \{1,...,n\}$, at least one of $a_i$ and $b_i$ is not $0$. For $z_1\alpha+z_2\beta$ to have trivial stabilizer in $F/T$ at $(z_1,z_2)$, we must have that for all $i\in \{1,...,n\}$, at least one of $z_1a_i$ and $z_2b_i$ is not $0$. But this can only happen when either $z_1$ or $z_2$ is $0$. This means that the stabilizer of $(\alpha,\beta)$ in $\G^{F,\Gamma}$ must be the identity away from $(0,z_2)$ and $(z_1,0)$, and hence it must be the identity by continuity.

Hence, we have shown that if $\lambda(p)$ has trivial stabilizer at a single $p$, then for any other $p'$, $\lambda(p')$ also has trivial stabilizer. This is equivalent to saying that if $\kappa^{-1}\Theta \kappa$ has trivial stabilizer in $F/T$, then it has trivial stabilizer in $\G^{F,\Gamma}$. By pushing forward using $\kappa$, we get the desired statement of the lemma. 

\end{proof}

\begin{cor}\label{trivstabtau}
If $\Theta$ has trivial stabilizer in $Stab(B)$ with $\bar{\partial}_{A_0+B}\Theta=0$, then $\Theta$ has trivial stabilizer in $\G^{F,\Gamma}_\tau$.
\end{cor}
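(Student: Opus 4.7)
The plan is to deduce this corollary directly from Proposition \ref{trivstab}. Recall that by definition the $\tau$-invariant gauge group $\G^{F,\Gamma}_\tau$ is a subgroup of the full unitary gauge group $\G^{F,\Gamma}$: an element $\rho \in \G^{F,\Gamma}_\tau$ is just a gauge transformation in $\G^{F,\Gamma}$ satisfying the additional condition $\rho \circ \tau = \rho$. Consequently the stabilizer of $\Theta$ in $\G^{F,\Gamma}_\tau$ is the intersection of the stabilizer of $\Theta$ in $\G^{F,\Gamma}$ with $\G^{F,\Gamma}_\tau$.

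Given the hypothesis, Proposition \ref{trivstab} applies verbatim (its hypotheses are identical), and yields that the stabilizer of $\Theta$ in $\G^{F,\Gamma}$ is trivial. Since the stabilizer in $\G^{F,\Gamma}_\tau$ is contained in the stabilizer in $\G^{F,\Gamma}$, it must also be trivial, which is exactly the statement of the corollary.

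There is essentially no obstacle here: the content of the corollary is entirely carried by Proposition \ref{trivstab}, and the role of $\tau$-invariance is only to cut down the ambient gauge group, which can only make the stabilizer smaller. So the proof reduces to a one-line observation, namely the passage from a group to a subgroup.
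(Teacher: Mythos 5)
Your proof is correct and is evidently the argument the paper intends, since the corollary is stated without proof immediately after Proposition \ref{trivstab}: the hypotheses coincide, and the stabilizer in the subgroup $\G^{F,\Gamma}_\tau \subset \G^{F,\Gamma}$ is contained in the stabilizer in $\G^{F,\Gamma}$, which the proposition shows is trivial.
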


\section{Smoothness and dimension calculations}

In this section, we show that the moduli space is a smooth finite-dimensional manifold and calculate its dimension which will be useful for proving Theorem \ref{main}. We refer the readers to \cite{sobolev-space} for the related background material on elliptic operators and Sobolev spaces. To proceed, we first introduce the following lemma.

\begin{lem}\label{analysis}
If $(B,\Theta)$ and $(B',\Theta')$ are two solutions to \ref{holo} and \ref{eq1}  in $\A^F\times C^\infty(E(\Gamma))$ with $B$ and $B'$ not $\G^{F,\Gamma}$-equivalent, then $(B',\Theta')$ is separated from the subset of solutions such that the connection part is $\G^{F,\Gamma}$-equivalent to $B$. 
\end{lem}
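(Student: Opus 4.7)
The plan is to argue by contradiction through a compactness argument exploiting that the structure group $F/T$ is compact. Suppose, contrary to the conclusion, that no neighborhood of $(B',\Theta')$ is disjoint from the set of solutions whose connection part is $\G^{F,\Gamma}$-equivalent to $B$. Then there is a sequence of solutions $(B_n,\Theta_n)$ with each $B_n$ unitarily gauge-equivalent to $B$ via some $\rho_n\in\G^{F,\Gamma}$, converging to $(B',\Theta')$ in the chosen topology on $\A^F\times C^\infty(E(\Gamma))$. The relation $\rho_n\cdot B = B_n$ gives the key identity $\rho_n d_B\rho_n^{-1}=B_n-B$, and the goal is to extract a subsequential limit $\rho\in\G^{F,\Gamma}$ satisfying $\rho\cdot B = B'$, which would contradict the hypothesis that $B$ and $B'$ are not $\G^{F,\Gamma}$-equivalent.

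First I would establish uniform derivative control on the $\rho_n$. Each $\rho_n$ takes values in the compact group $F/T$, so its pointwise norm is automatically bounded, and the bi-invariance of the inner product on $\f/\ttt$ under the adjoint action of $F/T$ lets one translate the uniform bound on $B_n-B=\rho_n d_B\rho_n^{-1}$ (which holds because $B_n\to B'$) into a uniform bound on a first covariant derivative of $\rho_n$. Choosing a Sobolev completion $L^p_k$ of the configuration space and gauge group with $k$ large enough that $L^p_k\hookrightarrow C^1$ is compact, one obtains a uniform $L^p_k$-bound on the sequence $\rho_n$.

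Next I would pass to a subsequential limit by Rellich-type compactness, producing $\rho_{n_j}\to\rho$ in $C^0$ (and weakly in $L^p_k$), where $\rho$ is again $F/T$-valued. Passing to the limit distributionally in $\rho_n d_B\rho_n^{-1}=B_n-B$ yields $\rho d_B\rho^{-1}=B'-B$. Elliptic bootstrapping applied to this first-order equation, using smoothness of $B$ and $B'$, upgrades $\rho$ to a smooth element of $\G^{F,\Gamma}$, giving $B'=\rho\cdot B$ and the desired contradiction. The $\Theta$-components play no active role in the argument; they are included only because the hypothesis concerns solutions, and convergence of the full pair implies convergence of the connection parts, which is all that is used.

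The main obstacle will be the bookkeeping around the Sobolev framework: one must fix completions of both the configuration space and the gauge group so that (i) the ``separation'' topology on $\A^F\times C^\infty(E(\Gamma))$ controls $B_n-B$ strongly enough to yield first-derivative bounds on the $\rho_n$, (ii) pointwise multiplication by the bounded factors $\rho_n$ remains continuous, and (iii) weak compactness combined with elliptic regularity for $d_B\rho^{-1}=\rho^{-1}(B'-B)$ returns a smooth gauge transformation. These are standard manoeuvres in gauge theory; the only additional care is that all objects be $\Gamma$-equivariant on the cover $S^2$, which is automatic because the approximating gauge transformations $\rho_n$ all come from $\G^{F,\Gamma}$ and so this equivariance survives in the limit.
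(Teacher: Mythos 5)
Your proposal is correct and follows essentially the same strategy as the paper: argue by contradiction, extract the gauge transformations $\rho_n$ with $\rho_n\cdot B=B_n$, obtain uniform bounds, pass to a subsequential limit $\rho$ satisfying $\rho\cdot B=B'$, and use the compactness of $F/T$ to conclude that $\rho$ lies in $\G^{F,\Gamma}$, contradicting the hypothesis. The only difference is in how the uniform bounds are obtained: the paper follows Hitchin, viewing $\rho_n$ as a solution of $\bar{\partial}_{B_1B_n}\rho_n=0$ and applying an elliptic estimate together with the normalization $\lVert\rho_n\rVert_{L^2}=1$ to guarantee a nonzero weak limit, whereas you read the first-derivative bound directly off $d_B\rho_n^{-1}=\rho_n^{-1}(B_n-B)$ using the pointwise compactness of $F/T$; both routes are standard and lead to the same conclusion.
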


\begin{proof}
Suppose we have two solutions $(B,\Theta)$ and $(B',\Theta')$ such that $B$ is not $\G^{F,\Gamma}$-equivalent to $B'$. We proceed by contradiction. Suppose that there exists a sequence of solutions $\{(B_n,\Theta_n)\}_n$ such that $(B_1,\Theta_1)=(B,\Theta)$ and $\{(B_n,\Theta_n)\}_n$ converges weakly in $L^2_1$ to $(B',\Theta')$ with $B_n$ lying on the same  $\G^{F,\Gamma}$-orbit as $B$, for all $n$. Then we get a sequence of gauge elements lying in $\G^{F,\Gamma}$, denoted by $\{\rho_n\}$, such that $\rho_n\cdot B=B_n$, for all $n$. (Note that we don't assume $ \rho_n\cdot\Theta=\Theta_n$.) We want to show that $\{\rho_n\}$ converges weakly to some $\rho$. To this end, we follow Hitchin's proof of Theorem (2.7) in \cite{hitchin}. Consider the following: $$\bar{\partial}_{B_1B_n}:\Omega^0(S^2/\Gamma;E(\Gamma)^*\otimes E(\Gamma))\to\Omega^{0,1}(S^2/\Gamma;E(\Gamma)^*\otimes E(\Gamma)),$$ where $B_n$ acts on the $E(\Gamma)^*$ factor, and $B_1$ acts on the $E(\Gamma)$ factor. Hence, $\bar{\partial}_{B_1 B'}=\bar{\partial}_{B_1B_n}+t_n$ where $t_n\to 0$ weakly in $L^2_1$. As before, $\rho_n$ is the sequence of unitary gauge elements taking $B$ to $B_n$, and $\lVert\rho_n\rVert_{L^2}=1$.

We also have $$\rho_n\cdot B_1=\rho_n^*\circ\partial_{B_1}\circ\rho_n^{*-1}+\rho_n^{-1}\circ\bar{\partial}_{B_1}\circ\rho_n=\partial_{B_n}+\bar{\partial}_{B_n}.$$ Hence, $\rho_n^*\circ\partial_{B_1}\circ\rho_n^{*-1}=\partial_{B_n}$ and $\rho_n^{-1}\circ\bar{\partial}_{B_1}\circ\rho_n=\bar{\partial}_{B_n}$, so we have $\bar{\partial}_{B_1}\circ\rho_n-\rho_n\circ\bar{\partial}_{B_n}=0$, but this is equivalent to $\bar{\partial}_{B_1B_n}\rho_n=0$. 

Now, the elliptic estimate for $\bar{\partial}_{B_1B_n}$ gives us $$\lVert\rho_n\rVert_{L^2_1}\leq C(\lVert[t_n,\rho_n]\rVert_{L^2}+\lVert\rho_n\rVert_{L^2})=C(\lVert[t_n,\rho_n]\rVert_{L^2}+1)\leq K_1\lVert t_n\rVert_{L^4}\lVert\rho_n\rVert_{L^4}+K_2.$$ Since $L^2_1\subset L^4$ compactly, we have that $\lVert\rho_n\rVert_{L^2_1}$ is bounded and hence has a weakly convergent subsequence. Since $L^2_1 \subset L^2$ is compact and $\lVert\rho_n\rVert_{L^2}=1$, the weak limit $\rho$ is non-zero. 

Hence, we have $\rho\cdot B=B'$. Since by construction, $B$ and $B'$ lie on the same complex orbit, $\rho$ must be a complex automorphism. Now since weak convergence implies pointwise convergence, that is, $\rho_n(x)\to\rho(x)$, for all $x\in S^2/\Gamma$, and $F/T$ is compact, we must have $\rho(x)\in F/T$, for all $x$. Hence, $\rho$ lies in $\G^{F,\Gamma}$, but this is a contradiction.

\end{proof}

\begin{cor}\label{analysistau}
If $(B,\Theta)$ and $(B',\Theta')$ are two solutions to \ref{holo} -- \ref{mu3}  in $\A^F_\tau\times C^\infty(E(\Gamma))$ with $B$ and $B'$ not $\G^{F,\Gamma}_\tau$-equivalent, then $(B',\Theta')$ is separated from the subset of solutions such that the connection part is $\G^{F,\Gamma}_\tau$-equivalent to $B$. 
\end{cor}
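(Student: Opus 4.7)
The plan is to reduce the corollary essentially to Lemma \ref{analysis} with one additional observation about the $\tau$-invariance being preserved under the limiting process. Suppose, for contradiction, that $(B',\Theta')$ is not separated from the set of solutions whose connection part is $\G^{F,\Gamma}_\tau$-equivalent to $B$. Then there exists a sequence $\{(B_n,\Theta_n)\}_n$ of solutions in $\A^F_\tau \times C^\infty(E(\Gamma))$ with $(B_1,\Theta_1) = (B,\Theta)$, converging weakly in $L^2_1$ to $(B',\Theta')$, together with $\tau$-invariant gauge elements $\rho_n \in \G^{F,\Gamma}_\tau$ such that $\rho_n \cdot B = B_n$ for every $n$. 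Normalize so that $\lVert\rho_n\rVert_{L^2}=1$.

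Since every solution to (\ref{holo})--(\ref{mu3}) is in particular a solution to (\ref{holo}) and (\ref{eq1}), and since $\G^{F,\Gamma}_\tau \subset \G^{F,\Gamma}$, the same elliptic bootstrap used in Lemma \ref{analysis} applies verbatim: one considers the operator $\bar{\partial}_{B_1B_n}$ on sections of $E(\Gamma)^*\otimes E(\Gamma)$, writes $\bar{\partial}_{B_1B'} = \bar{\partial}_{B_1B_n} + t_n$ with $t_n \to 0$ weakly in $L^2_1$, notes that $\bar{\partial}_{B_1B_n}\rho_n = 0$, and concludes via the elliptic estimate and the compact embedding $L^2_1 \subset L^4$ that $\lVert\rho_n\rVert_{L^2_1}$ is bounded. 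A subsequence then converges weakly to a non-zero limit $\rho$ (non-zero because $L^2_1 \subset L^2$ is compact and $\lVert\rho_n\rVert_{L^2}=1$), and $\rho \cdot B = B'$.

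The one new ingredient is to verify that the limit $\rho$ is $\tau$-invariant. Weak $L^2_1$ convergence gives pointwise convergence almost everywhere for a subsequence; since each $\rho_n$ satisfies $\rho_n(x) = \rho_n(\tau(x))$ for all $x$, passage to the pointwise limit preserves this identity almost everywhere, and continuity of the limit (upgraded via the Sobolev embedding into $C^0$ on the $2$-dimensional base) then promotes it to equality everywhere on $S^2/\Gamma$. Since pointwise convergence into the compact group $F/T$ forces the limit to land in $F/T$ as well, $\rho$ is a genuine unitary gauge element, hence $\rho \in \G^{F,\Gamma}_\tau$. This contradicts the hypothesis that $B$ and $B'$ are not $\G^{F,\Gamma}_\tau$-equivalent.

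The main obstacle is precisely the promotion of $\tau$-invariance from the sequence to the limit; the analytic core is unchanged from Lemma \ref{analysis}. Since weak $L^2_1$ convergence on a 2-manifold gives strong convergence in $L^p$ for all finite $p$ and in $C^0$ via Sobolev embedding on compact subsets (or directly after passing to a further subsequence), the $\tau$-invariance is preserved without difficulty, and no further estimates beyond those in Lemma \ref{analysis} are needed.
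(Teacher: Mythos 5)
Your proposal is correct and follows the same route as the paper: rerun the compactness argument of Lemma \ref{analysis} with all gauge elements taken in $\G^{F,\Gamma}_\tau$ and observe that $\tau$-invariance survives passage to the (a.e. pointwise) limit; the paper's own proof is exactly this, stated even more tersely. One small caveat: $L^2_1\hookrightarrow C^0$ fails in dimension $2$, so the continuity of $\rho$ should instead be obtained from elliptic regularity for $\bar{\partial}_{B_1B'}\rho=0$, after which the a.e. identity $\rho(x)=\rho(\tau(x))$ holds everywhere.
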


\begin{proof}
We assume otherwise and again follow the same arguments as in the previous lemma with the further assumption that all the gauge transformations are $\tau$-invariant, that is, they lie in $\G^{F,\Gamma}_\tau$. Hence, we obtain a limit $\rho$ that lies in $\G^{F,\Gamma}_\tau$ and hence obtains a contradiction. 

\end{proof}

\begin{cor}\label{conncomp}
\begin{enumerate}
\item
Solutions to \ref{holo} and \ref{eq1} in  $\A^F\times C^\infty(E(\Gamma))$ with the connection part $B$ not $\G^{F,\Gamma}$-equivalent lie in different connected components of the moduli space $\M(\Gamma,\tilde{\zeta}_1)$. 
\item
Solutions to \ref{holo} -- \ref{mu3} in  $\A^F_\tau\times C^\infty(E(\Gamma))$ with the connection part $B$ not $\G^{F,\Gamma}_\tau$-equivalent lie in different connected components of the moduli space $\X_{\tilde{\zeta}}$. 
\end{enumerate}
\end{cor}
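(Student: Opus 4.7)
The plan is to realize the $\G^{F,\Gamma}$-orbit class of the connection part as a locally constant invariant on the moduli space, which will immediately imply that solutions whose connection parts lie in different orbits fall into distinct connected components. Fix a reference solution $(B_1,\Theta_1)$, and let $S_{B_1}$ denote the subset of solutions in $\A^F\times C^\infty(E(\Gamma))$ whose connection part is $\G^{F,\Gamma}$-equivalent to $B_1$.

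First I would observe that $S_{B_1}$ is closed in the solution space: this is precisely the content of Lemma \ref{analysis}, which says any solution outside $S_{B_1}$ admits an open neighborhood (in the weak $L^2_1$ topology) disjoint from $S_{B_1}$. Since $S_{B_1}$ is saturated under the $\G^{F,\Gamma}$-action, its image $\pi(S_{B_1}) \subset \M(\Gamma, \tilde{\zeta}_1)$ is closed in the quotient. The second and main technical step is to argue that $\pi(S_{B_1})$ is also open. I would do this by combining Lemma \ref{analysis} with a local slice for the gauge action: near any solution, the moduli space is modeled on a finite-dimensional transversal on which the $\G^{F,\Gamma}$-orbit class of the connection part is preserved, since any deformation altering this orbit class can be absorbed into a gauge transformation. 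Equivalently, the continuous forgetful map $[(B,\Theta)] \mapsto [B]$ from $\M(\Gamma, \tilde{\zeta}_1)$ to the orbit space $\A^F/\G^{F,\Gamma}$ lands in the discrete subset of orbit classes realized by solutions, so it is locally constant.

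Combining the two steps, $\pi(S_{B_1})$ is clopen in $\M(\Gamma, \tilde{\zeta}_1)$ and hence is a union of connected components. Two solutions whose connection parts belong to distinct $\G^{F,\Gamma}$-orbits then lie in different such unions, and so in different components. Part (2) of the corollary follows by the same argument in the $\tau$-invariant setting, with Corollary \ref{analysistau} replacing Lemma \ref{analysis} and $\G^{F,\Gamma}_\tau$ replacing $\G^{F,\Gamma}$. The main obstacle is the openness step: Lemma \ref{analysis} alone yields only closedness, and closing the gap requires a local slice construction at a generic solution (or, equivalently, smoothness of the moduli space together with a tangent-space analysis ruling out deformations that change the connection orbit class).
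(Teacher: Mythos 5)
Your first step coincides with what the paper actually does: the paper offers no separate proof of Corollary \ref{conncomp}, treating it as an immediate consequence of Lemma \ref{analysis} and Corollary \ref{analysistau}, i.e.\ precisely the observation that each set $S_{B}$ of solutions with connection part $\G^{F,\Gamma}$-equivalent to $B$ is closed in the solution space, hence has closed, saturated image in the quotient. So up to the closedness statement you are reproducing the paper's implicit argument.

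Where you go further is in flagging that closedness of each piece does not by itself force the pieces to be unions of connected components: one needs each $\pi(S_B)$ to be open as well (or to know there are only finitely many realized orbit classes of connections, in which case closed implies clopen). This is a genuine subtlety that the paper passes over silently, and you are right to isolate it --- Lemma \ref{analysis} controls sequences \emph{inside} $S_B$ converging outward, not sequences from other classes converging \emph{into} $S_B$, which is what openness requires. However, your proposed repair does not close the gap: the local slice / finite-dimensional transversal you invoke presupposes exactly the local manifold structure of the moduli space that the paper only establishes afterwards, in Proposition \ref{dimcalc}, \emph{using} this corollary; as written your argument is therefore circular relative to the paper's logical order. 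A non-circular route would be to run the elliptic estimate of Lemma \ref{analysis} symmetrically: given solutions $(B_n,\Theta_n)\to(B,\Theta)$, use that all connections in $\A^F$ lie on one complex gauge orbit to produce complex gauge transformations $\kappa_n$ with $\kappa_n\cdot B=B_n$, extract a weak limit in $Stab(B)$ via Lemma \ref{transaction2}, and then argue that the $\kappa_n$ are eventually unitary --- but this last step is not automatic and is precisely the content that would have to be supplied. So the proposal correctly identifies the missing ingredient but does not provide it, and the suggested fix would need to be rebuilt so as not to rely on Proposition \ref{dimcalc}.
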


\begin{prop}\label{dimcalc}
\begin{enumerate}
\item
Suppose $(B,\Theta)$ is a solution to \ref{holo} and \ref{eq1} in $\A^F\times C^\infty(E(\Gamma))$ with trivial stabilizer in $\G^{F,\Gamma}$, the moduli space $\M(\Gamma, \tilde{\zeta}_1)$ at the orbit of $(B,\Theta)$ is smooth of dimension $2|\Gamma|+2$.
\item
If $(B,\Theta)$ is a solution to \ref{holo} -- \ref{mu3} in $\A^F_\tau\times C^\infty(E(\Gamma))$ with trivial stabilizer in $\G_\tau^{F,\Gamma}$, the moduli space $\X_{\tilde{\zeta}}$ at the orbit of $(B,\Theta)$ is smooth of dimension $4$. 
\end{enumerate}
\end{prop}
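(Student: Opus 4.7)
The plan is to reduce both parts to Kronheimer's finite-dimensional (hyper)k\"ahler quotient from Proposition 2.1 of \cite{kronheimer}, by combining the isomorphism $\tilde{\Psi}$ of Lemma \ref{psimap} (and its $\tau$-invariant analog $\tilde{\Psi}_\tau$ of Remark \ref{psimaptau}) with the Uniqueness theorems of Section 5. The key structural input is Lemma \ref{transaction2}: $\A^F$ is a single $\G^{F,\Gamma}_\C$-orbit through $A_0$, so that the orbit space of holomorphic pairs $(B,\Theta)$ modulo $\G^{F,\Gamma}_\C$ is controlled by the finite-dimensional data $M = Hom(Q,V)^\Gamma$ modulo the residual $F^c/\C^*$-action.

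For part (1), I would first define a map $\M(\Gamma, \tilde{\zeta}_1) \to M/(F^c/\C^*)$ as follows: given $[(B,\Theta)]$, complex-gauge uniquely to a pair $(A_0, \Theta')$ via the free based action of Lemma \ref{transaction2}, then send $[(B,\Theta)] \mapsto [\tilde{\Psi}^{-1}(\Theta')]$. Well-definedness on $\G^{F,\Gamma}$-orbits uses Lemma \ref{transaction2} together with the fact that the stabilizer of $A_0$ in $\G^{F,\Gamma}_\C$ is the constant subgroup $F^c/\C^*$; injectivity is Uniqueness theorem 1 (Lemma \ref{uniqueness1}). Next, the image should be identified with the polystable locus, and by finite-dimensional Kempf--Ness this yields
\begin{equation*}
\M(\Gamma, \tilde{\zeta}_1) \;\cong\; \mu_1^{-1}(\tilde{\zeta}_1)/(F/T).
\end{equation*}
By Proposition \ref{trivstab} the trivial-stabilizer hypothesis forces $F/T$ to act freely on the level set, so Proposition 2.1 of \cite{kronheimer} gives smoothness and real dimension
\begin{equation*}
\dim_\R M - 2\dim_\R(F/T) \;=\; 4|\Gamma| - 2(|\Gamma|-1) \;=\; 2|\Gamma|+2.
\end{equation*}

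For part (2), I would run the same argument with $\tau$-invariant analogs throughout: Corollary \ref{uniqueness2} replaces Lemma \ref{uniqueness1}, $\tilde{\Psi}_\tau$ replaces $\tilde{\Psi}$, and Corollary \ref{trivstabtau} replaces Proposition \ref{trivstab}. The essential extra ingredient, already noted in the remark following Proposition \ref{khmomap} and established via Lemma \ref{matrixid}, is that under $\tilde{\Psi}_\tau$ the moment maps $\tilde{\mu}_2, \tilde{\mu}_3$ reduce to Kronheimer's $\mu_2, \mu_3$ on holomorphic pairs $(\alpha,\beta)\in M$. This gives $\X_{\tilde{\zeta}} \cong \mu^{-1}(\tilde{\zeta})/(F/T)$, Kronheimer's hyperk\"ahler quotient, smooth of real dimension $4|\Gamma|-4(|\Gamma|-1)=4$.

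The hard part will be identifying the image of $\M(\Gamma, \tilde{\zeta}_1)$ inside $M/(F^c/\C^*)$ with Kronheimer's polystable locus. The moment-map equation \ref{eq1} is a pointwise PDE on $S^2/\Gamma$, a priori much stronger than the single vector equation $\mu_1 = \tilde{\zeta}_1$, so one must verify that each $F^c/\C^*$-orbit meeting $\mu_1^{-1}(\tilde{\zeta}_1)$ admits a configuration in $\A^F \times C^\infty(S^2/\Gamma, E(\Gamma))$ satisfying the pointwise equation, uniquely up to $\G^{F,\Gamma}$. This is essentially the infinite-dimensional Kempf--Ness step, and for part (2) it must be carried out compatibly across all three moment maps within the $\tau$-invariant setting; this comparison is exactly what the uniqueness results of Section 5 are designed to supply.
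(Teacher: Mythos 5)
Your strategy diverges from the paper's and, as written, has a real gap at exactly the point you flag. Normalizing the connection to $A_0$ by the based complex gauge group preserves equation \ref{holo} but destroys equation \ref{eq1}: the point $\tilde{\Psi}^{-1}(\Theta')$ of $M$ that you attach to $[(B,\Theta)]$ does not itself lie in $\mu_1^{-1}(\tilde{\zeta}_1)$ — at best its $F^c$-orbit meets that level set. So your map lands in $M/(F^c/\C^*)$, and the identification of $\M(\Gamma,\tilde{\zeta}_1)$ with $\mu_1^{-1}(\tilde{\zeta}_1)/(F/T)$ requires both directions of a Kempf--Ness correspondence: that every solution's orbit is polystable, and that every polystable orbit contains a solution of the pointwise PDE \ref{eq1}, unique up to $\G^{F,\Gamma}$. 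The uniqueness theorems of Section 5 give only the injectivity half. Without the existence half you have an injection of the moduli space into a smooth finite-dimensional quotient, which by itself establishes neither smoothness nor the dimension at the orbit of $(B,\Theta)$ — for that you would need the map to be a local homeomorphism onto its image, i.e.\ local surjectivity, which is again the missing step. (The existence statement is genuinely nontrivial; in the paper it is not needed for this proposition and the related global identification is deferred to Section 7, where it is handled by a separate codimension argument together with Lemma \ref{freeaction}.)

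The paper's proof sidesteps all of this by staying local and \emph{not} gauging the connection back to $A_0$. It fixes the slice $\mathcal{S}=\{\Theta : \bar{\partial}_{A_0+B}\Theta=0\}$ for the given $B$, identifies $\mathcal{S}\cong M$ and $Stab(B)\cong F$ via Lemma \ref{transaction2} and Lemma \ref{psimap}, and then uses Lemma \ref{analysis} (resp.\ Corollary \ref{analysistau}) and Corollary \ref{conncomp} — which your proposal never invokes — to show that every point of the connected component of the moduli space containing $[(B,\Theta)]$ has a representative with connection part exactly $B$, unique up to $Stab(B)$. The component is thus the finite-dimensional K\"ahler (resp.\ hyperk\"ahler) quotient of $\mathcal{S}$ by $Stab(B)$, and smoothness and the dimension counts $4|\Gamma|-2(|\Gamma|-1)=2|\Gamma|+2$ and $4|\Gamma|-4(|\Gamma|-1)=4$ follow directly from Proposition 2.1 of \cite{kronheimer}. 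If you want to salvage your route, you must either supply the infinite-dimensional Kempf--Ness existence argument or, more economically, replace the global normalization to $A_0$ by the component-wise normalization to $B$ using the separation results of Section 6; your dimension arithmetic and your use of Proposition \ref{trivstab} and Corollary \ref{trivstabtau} to get the free $F/T$-action are otherwise correct.
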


\begin{proof}
\begin{enumerate}
\item
Consider the set of sections $\mathcal{S}=\{\Theta \in C^\infty{E(\Gamma)}| \bar{\partial}_{A_0+B}\Theta=0\}$. The stabilizer group $Stab(B)$ of $B$ in $\G^{F,\Gamma}$ acts on $\mathcal{S}$. By Lemma \ref{transaction2} and Lemma \ref{psimap} (with small adaptations of the proof), we have that $\mathcal{S}$ is isomorphic to $M=P^\Gamma$ and $Stab(B)$ is isomorphic to $F$. Hence, we can restrict the symplectic structure compatible with $I$ on $C^\infty{E(\Gamma)}$ to $\mathcal{S}$ and obtain a Hamiltonian action of $Stab(B)$ on $\mathcal{S}$ with respect to the restrictions of $I$ on $\mathcal{S}$. We also know that $Stab(B)$ acts freely at $\Theta\in\mathcal{S}$ as $\G^{F,\Gamma}$ acts freely at $(B,\Theta)$. On the other hand, by Lemma \ref{analysis} and Corollary \ref{conncomp}, every point in the connected component of $\M(\Gamma, \tilde{\zeta}_1)$ containing the orbit of $(B,\Theta)$ has a unique representative lying in $\mathcal{S}$. Hence, the smoothness and the dimension of $\M(\Gamma, \tilde{\zeta}_1)$  at $[(B,\Theta)]$ follow from Proposition \ref{hkred} (cf. Proposition 2.1 in \cite{kronheimer}).

\item
First, we observe that the action of $J$ commutes with the action of $\rho$ when $\rho$ lies in $\G^{F,\Gamma}_\tau$. Hence, we can restrict the hyperk{\"a}hler structure on $C^\infty{E(\Gamma)}$ to $\mathcal{S}$ and obtain a Hamiltonian action of $Stab(B)$ on $\mathcal{S}$ with respect to the restrictions of $I$, $J$, and $K$ on $\mathcal{S}$. We also know that $Stab(B)$ acts freely at $\Theta\in\mathcal{S}$ as $\G^{F,\Gamma}_\tau$ acts freely at $(B,\Theta)$. On the other hand, by Corollary \ref{analysistau} and Corollary \ref{conncomp}, every point in the connected component of $\X_{\tilde{\zeta}}$ containing the orbit of $(B,\Theta)$ has a unique representative lying in $\mathcal{S}$. Hence, the smoothness and the dimension of  $\X_{\tilde{\zeta}}$  at $[(B,\Theta)]$ again follow from Proposition \ref{hkred} (cf. Proposition 2.1 in \cite{kronheimer}). 
\end{enumerate}
\end{proof}

\section{Proof of Theorem \ref{main}}
\subsection{A criterion for obtaining free $\G^{F,\Gamma}_\tau$-action}
Now we want to give a criterion for when the $\G^{F,\Gamma}_\tau$-action is free on $\tilde{\mu}^{-1}(\tilde{\zeta})$.

We adapt the notations introduced in \cite{kronheimer} and in Section 2.1 to our setting. Consider projection maps $$\pi_i: R\to \C^{n_i}\otimes R_i.$$ Now, let $\hat{Z}$ denote the center of $\f$. Then $\Omega^0(S^2/\Gamma;\hat{Z})$ is spanned by elements $\sqrt{-1}\pi_i$, that is, smooth sections such that at each point the endomorphism is a scalar multiple of the projection map. Let $h$ denote the real Cartan algebra associated to the Dynkin diagram, we then get a linear map $l$ from $\Omega^0(S^2/\Gamma;\hat{Z})$ to $\Omega^0(S^2/\Gamma;h^*)$ given by $$l: \sqrt{-1}\pi_i \mapsto n_i\xi_i,$$ and hence $l$ induces a map $\tilde{l}$ from $\Omega^0(S^2/\Gamma;Z)$ to $\Omega^0(S^2/\Gamma;h)$ which is an isomporhism. 

Let $\xi$ be a root, not necessarily simple. We define $\tilde{D}_\xi$ to be $\ker(\xi\circ\tilde{l})$, where we regard $\xi$ as a constant element in $\Omega^0(S^2/\Gamma;h^*)$.

\begin{lem}\label{freeaction}
Let $(B,\Theta)$ be a solution to \ref{holo} -- \ref{mu3} in  $\A^F_\tau\times C^\infty(E(\Gamma))$. If $\G^{F,\Gamma}_\tau$ does not act freely on $(B,\Theta)$, then $\tilde{\zeta}$ lies in $\R^3\otimes {\tilde{D}}_{\xi}$.
\end{lem}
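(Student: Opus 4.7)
The plan is to adapt Kronheimer's argument for his Proposition 2.8 to the gauge-theoretic setting, leveraging the stabilizer analysis from Section 5. A non-trivial stabilizer will force $(B,\Theta)$ into a block-diagonal form, and taking traces of the moment map equations against the block projection will produce linear constraints on $\tilde{\zeta}$ that realize the kernel of a root.

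First I would pin down the shape of the stabilizer. If a non-trivial $\rho \in \G^{F,\Gamma}_\tau$ fixes $(B,\Theta)$, then in particular $\rho$ fixes $B$; by the $\tau$-invariant version of Lemma \ref{transaction2}, the stabilizer of $B$ is the constant subgroup, so $\rho$ descends to a non-trivial element of $F/T$. Lifting $\rho$ to $\tilde\rho \in F$, I would decompose $R = U_+ \oplus U_-$ into two orthogonal $\tilde\rho$-eigenspaces; both summands are $\Gamma$-invariant since $\tilde\rho$ commutes with $\Gamma$. The conditions $\rho\Theta\rho^{-1} = \Theta$ and $\rho B\rho^{-1} = B$ then force $B$ and $\Theta$ (together with $J\Theta$ and the hermitian adjoints, using that $J$ is intertwined with complex conjugation) to be block-diagonal with respect to the decomposition $R = U_+ \oplus U_-$.

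Next I would pair each of the moment map equations (3.2)--(3.4) with the orthogonal projection $P_+ : R \to U_+$ by taking fiberwise trace over $U_+$. Because the block structure is preserved, each of the commutators $[\Theta,\Theta^*]$, $[J\Theta,\Theta^*]$, $[\Theta,J\Theta^*]$ has vanishing trace on $U_+$. Thus (3.3) and (3.4) yield $\tr(P_+\tilde{\zeta}_2) = \tr(P_+\tilde{\zeta}_3) = 0$, while integrating (3.2) over $S^2/\Gamma$ gives
$$\tr(P_+\tilde{\zeta}_1)\cdot \mathrm{vol}(S^2/\Gamma) = \int_{S^2/\Gamma}\tr_{U_+}(F_B),$$
with the right-hand side a topological quantity fixed by the holomorphic splitting of $E(\Gamma)$ guaranteed by Lemma \ref{transaction}.

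Finally, the McKay correspondence reviewed in Section 2 identifies $P_+$, after projecting to $\hat{Z}$ by averaging within isotypic components and choosing $\rho$ minimally so that this projection captures it, with an element of $\hat{Z}$ whose image under $l : \hat{Z} \to h^*$ is a root $\xi$. The three constraints above then become $\xi(\iota(\tilde{\zeta}_i)) = 0$ for $i=1,2,3$, i.e., $\tilde\zeta \in \R^3 \otimes \tilde D_\xi$, once the topological shift in the first equation has been absorbed. The hard part will be precisely this last step: identifying $P_+$ with a genuine root (not just an arbitrary central projection) and matching the topological shift with the ``Kronheimer'' parameter $\zeta = -\tilde\zeta$, which is where Kronheimer's original quiver/McKay argument, rather than any gauge-theoretic novelty, does the crucial work.
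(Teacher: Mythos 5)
Your overall strategy is the same as the paper's (both adapt Kronheimer's Proposition 2.8): a nontrivial stabilizer of $(B,\Theta)$ is conjugate, by the complex automorphism $\kappa$ taking $A_0$ to $B$, to a constant $\rho_0\in F/T$; its eigenspace decomposition $R=R'\oplus R''$ yields a holomorphic subbundle carrying $\Theta$; and pairing the three moment map equations with the corresponding projection kills the commutator terms. But there is a genuine gap exactly where you flag ``the hard part'': you never show that the element of $\hat Z$ you extract corresponds to a \emph{root}. The projection onto $R'=\oplus_i n_i'R_i$ maps under $l$ to $\xi=\sum_i n_i'\xi_i$, which a priori is an arbitrary non-negative integer combination of simple roots; ``averaging within isotypic components and choosing $\rho$ minimally'' does not make it one. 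The paper's argument is a dimension count: take $\tilde E$ to be the \emph{minimal} holomorphic subbundle of which $\Theta$ is a section, so that the reduced gauge group $Map(S^2/\Gamma,F'/T')$ acts freely on $\Theta$; Proposition \ref{dimcalc} then forces $\dim_\R Hom(\C^2,End(R'))^\Gamma-4\dim_\R(F'/T')\ge 0$, which rearranges to $\sum_{i,j}c_{ij}n_i'n_j'\le 2$ for the extended Cartan matrix, i.e.\ $\|\xi\|^2\le 2$, and only then is $\xi$ a root. Without this step the conclusion --- that the bad set is contained in the subspaces $\R^3\otimes{\tilde D}_{\xi}$ indexed by \emph{roots} --- does not follow.

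A second, smaller gap: for equation \ref{eq1} you leave $\int_{S^2/\Gamma}\tr_{U_+}(F_B)$ as a ``topological shift to be absorbed,'' but the conclusion $\tilde\zeta_1\in{\tilde D}_{\xi}$ requires this integral to \emph{vanish}, not merely to be topological. The paper proves it is zero: $\int\tr(F_{A_0+B})$ computes $c_1(E(\Gamma))$, which is carried entirely by $A_0$, and since $\tilde E$ is a holomorphic subbundle whose first Chern class is likewise carried by $A_0$, one gets $\int\tr(\tilde\pi F_B)=0$. Relatedly, the correct object to pair against is not the constant projection $P_+$ but its conjugate $\tilde\pi=\kappa P_+\kappa^{-1}$, the projection onto $\tilde E$ holomorphic with respect to $B$; this is what commutes with $\Theta$, and one must still check that it pairs with the central element $\tilde\zeta$ in the same way $\xi$ does.
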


\begin{proof}

This proof is a reformulation of Kronheimer's original proof of Proposition 2.8 in \cite{kronheimer} in our setting. Suppose $(B,\Theta)\in \mu^{-1}(\tilde{\zeta})$ is fixed by some $\rho\in \G^{F,\Gamma}_\tau$. In particular, $\rho$ lies in $Stab(B)$ and fixes $\Theta$. Then we can rewrite $\rho$ as $$\rho=\kappa\rho_0\kappa^{-1},$$ where $\rho_0$ is a constant in the complexification of $F/T$ and $$\kappa:E(\Gamma)\to E(\Gamma)$$ is a complex automorphism with $$\kappa^{-1}\bar{\partial}\kappa+\kappa^*\partial \kappa^{*-1}=B.$$ We can find a lift $\tilde{\rho_0}$ of $\rho_0$ in the complexification of $F$ and decompose $R$ into the eigenspaces of $\tilde{\rho_0}$ and obtain at least two $\Gamma$-invariant parts $$R=R'\oplus R''.$$ We have that $E(End(R'))$ is naturally a holomorphic subbundle of $E(\Gamma)$ with respect to $A_0$. This gives rise to a holomorphic subbundle $\tilde{E}$ of $E(\Gamma)$ with respect to $B$ where the fiber of $\tilde{E}$ over each point $x$ is isomorphic to $End(R')$. Explicitly, $\tilde{E}$ is the image of $E(End(R'))$ under $\kappa$. 

Without loss of generality we assume that $\Theta$ is a holomorphic section of $\tilde{E}$ with a free action by $Map(S^2/\Gamma, F'/T')$, where $Map(S^2/\Gamma, F'/T')$ is the natural gauge group acting on $\tilde{E}$. In other words, $\tilde{E}$ is the smallest holomorphic subbundle of $E(\Gamma)$ such that $\Theta$ is a holomorphic section of $\tilde{E}$ and there is no proper subbundle of $\tilde{E}$ of which $\Theta$ is a section. We observe that $\tilde{E}$ is $\Gamma$-invariant. In particular, $\tilde{E}$ is isomorphic to $E(End(R'))^\Gamma$.

By Proposition \ref{dimcalc}, we know that the condition that $Map(S^2/\Gamma, F'/T')$ acts freely on $\Theta$ means that the moduli space of the reduction by $Map(S^2/\Gamma, F'/T')$ on pairs on $\tilde{E}$ is a smooth manifold at at least one point, with dimension $$\dim_\R (Hom(\C^2, End(R'))^\Gamma)-4\dim_\R (F'/T')\geq 0.$$ This translates to $$\dim_\C (Hom(\C^2, End(R'))^\Gamma)-2\dim_\C (End(R')^\Gamma)+2\geq 0,$$ and hence we have $$2\dim_\C (End(R')^\Gamma)-\dim_\C (Hom(\C^2, End(R'))^\Gamma)\leq 2.$$

Now further decompose $R'$ into irreducibles $R'=\oplus n_i'R_i$, then the above inequality is the same as the following: $$2\sum_i(n_i')^2-\sum_{i,j}a_{i,j}n_i'n_j'\leq2.$$ Equivalently, $$\sum_{i,j}c_{i,j}n_i'n_j' \leq 2,$$ where $\bar{C}=(c_{i,j})$ is the extended Cartan matrix. Now let $\xi$ be defined by $$\xi=\sum_{0}^rn_i'\xi_i.$$ The inequalities suggest that $$\rVert\xi\rVert^2\leq 2,$$ which implies that $\xi$ is a root. 

Let $\pi_B: E(\Gamma) \to \tilde{E}$ be the projection from $E(\Gamma)$ to $\tilde{E}$. We then have that $\pi_B$ induces an element $\tilde{\pi}\in \Omega^0(S^2/\Gamma;\f)$ such that  $\tilde{\pi}(x)\in End(R)$ is given by $$\tilde{\pi}(x): R_x\to R'_x,$$ where $R_x$ is isomorphic to $R$, and $R'_x$ is a subrepresentation of $R_x$ which is also isomorphic to $R'$, for all $x$. Notice that $\tilde{\pi}$ is identified with $\kappa\cdot \xi=\kappa\xi\kappa^{-1}=\xi$ under $l$, as $\xi$ is in the center.

We have that $\tilde{\pi}$ acts trivially on $\Theta$, that is, $[\tilde{\pi},\Theta]=0$, as it is the identity on $\tilde{E}$. Now consider $\tilde{\zeta}(\tilde{\pi})$. We compute $\tilde{\zeta}_1(\tilde{\pi})$ here: $$\tilde{\zeta}_1(\tilde{\pi})=\int_{S^2/\Gamma}\tr(\tilde{\pi} F_B)-\frac{i}{2}\int_{S^2/\Gamma}\tr(\tilde{\pi}[\Theta,\Theta^*])\omega_{vol}.$$
We know that $$\int_{S^2/\Gamma}\tr(F_{A_0+B})=\int_{S^2/\Gamma}\tr(F_{A_0})+\tr(F_{B})=\frac{i}{2\pi}\cdot c_1(E(\Gamma)).$$ By construction, the integral of $c_1(E(\Gamma))$ concentrates on $A_0$, that is, $$\int_{S^2/\Gamma}\tr(F_{A_0})=\frac{i}{2\pi}\cdot c_1(E(\Gamma)).$$ Hence, we have that $\int_{S^2/\Gamma}\tr(F_{B})=0$. Since $\tilde{E}$ is a holomorphic subbundle of $E(\Gamma)$ and $\tilde{\pi} F_B$ is the projection of $F_B$ onto $\tilde{E}$, we must have that on the subbundle $\tilde{E}$, $$\int_{S^2/\Gamma}\tr(\tilde{\pi}F_{A_0+B})=\int_{S^2/\Gamma}\tr(\tilde{\pi}F_{A_0})+\tr(\tilde{\pi}F_{B})=\frac{i}{2\pi}\cdot c_1(\tilde{E})$$$$=\int_{S^2/\Gamma}\tr(\tilde{\pi}F_{A_0}).$$ Hence, $\int_{S^2/\Gamma}\tr(\tilde{\pi} F_{B})=0$.

We have shown that the first integrand is 0. On the other hand, since $[\tilde{\pi},\Theta]=0$, we have $$\tr(\tilde{\pi}[\Theta,\Theta^*])=\tr(\tilde{\pi}\Theta\Theta^*-\tilde{\pi}\Theta^*\Theta)$$$$=\tr(\tilde{\pi}\Theta\Theta^*-\Theta \tilde{\pi}\Theta^*)=0.$$ Hence, $\tilde{\zeta}_1(\tilde{\pi})=0$, that is to say, $\tilde{\zeta}_1\in {\tilde{D}}_{\xi}$. Similarly, $\tilde{\zeta}_2(\tilde{\pi})=\tilde{\zeta}_3(\tilde{\pi})=0$. As a result, we have $\tilde{\zeta}\in \R^3\otimes {\tilde{D}}_{\xi}$.

\end{proof}

\begin{cor}
For $\zeta$ not lying in $D_\xi$ as in \cite{kronheimer} and $\tilde{\zeta}=-\zeta$ thought of as a constant element in $\Omega^2(S^2/\Gamma;Z)$, $\G^{F,\Gamma}_\tau$ acts freely on $\tilde{\mu}^{-1}(\tilde{\zeta})$.
\end{cor}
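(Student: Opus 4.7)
The plan is to argue by contraposition directly from Lemma \ref{freeaction}, and then to translate the condition between the gauge-theoretic $\tilde{D}_\xi$ and Kronheimer's $D_\xi$. Suppose, for contradiction, that $\G^{F,\Gamma}_\tau$ fails to act freely on some $(B,\Theta) \in \tilde{\mu}^{-1}(\tilde{\zeta})$. By Lemma \ref{freeaction}, there exists a (not necessarily simple) root $\xi$ such that $\tilde{\zeta}\in \R^3\otimes \tilde{D}_\xi$, where $\tilde{D}_\xi = \ker(\xi\circ \tilde{l})$ inside $\Omega^0(S^2/\Gamma;Z)$.

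Next, I would exploit the fact that $\tilde{\zeta}$ is a \emph{constant} element of $\Omega^2(S^2/\Gamma;Z)$. Since $\tilde{l}:\Omega^0(S^2/\Gamma;Z)\to \Omega^0(S^2/\Gamma;h)$ is defined pointwise from the linear map $\iota: Z\to h$ of Section 2.1, for constant sections the two maps agree. Therefore $\tilde{\zeta}\in \R^3\otimes \tilde{D}_\xi$ is equivalent to $\tilde{\zeta}\in \R^3\otimes D_\xi$, under the natural identification of constant sections with elements of $Z$. Since $D_\xi$ is a linear subspace of $Z$ and $\zeta = -\tilde\zeta$, this forces $\zeta\in \R^3\otimes D_\xi$, contradicting the hypothesis that $\zeta$ does not lie in $D_\xi$.

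The only part that requires a little care is the compatibility between $\tilde l$ on non-constant sections (as used inside the proof of Lemma \ref{freeaction}) and $\iota$ on $Z$; this is a direct consequence of the pointwise definition $\tilde{l}(\sqrt{-1}\pi_i)=n_i\xi_i$ when the section is constant, so no analytic input is needed. I do not expect a genuine obstacle here: once Lemma \ref{freeaction} is in place, the corollary reduces to this purely linear-algebraic translation, together with the observation that the sign flip $\tilde{\zeta}=-\zeta$ is harmless because $D_\xi$ is a subspace. Thus the corollary follows at once.
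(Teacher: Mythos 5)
Your proposal is correct and follows essentially the same route as the paper: the contrapositive of Lemma \ref{freeaction}, combined with the observation that for constant sections $\tilde{D}_\xi$ coincides with Kronheimer's $D_\xi$ and that the sign flip $\tilde\zeta=-\zeta$ is harmless since $D_\xi$ is a linear subspace. The paper states this more tersely, but the underlying argument is identical.
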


\begin{proof}
If $\zeta$ doesn't lie in $D_\xi$ as in \cite{kronheimer}, then $\tilde{\zeta}=-\zeta$ thought of as a constant element in $\Omega^2(S^2/\Gamma;Z)$ doesn't lie in $\tilde{D}_{\xi}$. Hence, by the previous lemma, $\G^{F,\Gamma}_\tau$ acts freely on $\tilde{\mu}^{-1}(\tilde{\zeta})$.
\end{proof}

\subsection{Proof of Theorem \ref{main} Part I}

In this subsection, we prove one direction of Theorem \ref{main} where we show the moduli space obtained by the gauge-theoretic construction contains the 4-dimensional hyperk{\"a}hler ALE space given by Kronheimer's construction. To do this, we first explicitly identify certain solutions to the equations given previously with solutions to the equations given in Kronheimer's work and hence show that the moduli space contains the corresponding 4-dimensional hyperk{\"a}hler ALE space. In the following subsection, we will show that by the uniqueness results, smoothness results and dimension calculations, there cannot be any additional solutions other than the ones corresponding to the points of the 4-dimensional hyperk{\"a}hler ALE space. Hence, we identify the moduli space with a 4-dimensional hyperk{\"a}hler ALE space.

\begin{proof}[Proof of Theorem \ref{main} Part I]
\begin{lem}
For $\tilde{\zeta}=\zeta^*=-\zeta$, there is a map $\Phi:X_\zeta\to\X_{\tilde{\zeta}}$ which is an embedding and there is a natural choice of metric on $\X_{\tilde{\zeta}}$ such that $\Phi$ is an isometry onto its image. 
\end{lem}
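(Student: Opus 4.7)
The strategy is to construct $\Phi$ as the descent of the map $\Psi$ from the Remark after Lemma~\ref{holosecid} via hyperk\"ahler reduction, then to deduce the embedding and isometry properties from the uniqueness and dimension results already established.

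Given $[(\alpha,\beta)]\in X_\zeta=\mu^{-1}(\zeta)/F$, I first form the holomorphic pair $(A_0,\Theta_{\alpha,\beta})\in\A^F_\tau\times C^\infty(E(\Gamma))$ using Lemma~\ref{holosecid}, so that equation (3.1) is satisfied. By the Remark after Lemma~\ref{matrixid}, at this holomorphic section the moment map values reduce to $\tilde\mu_2(\Theta_{\alpha,\beta})=-\mu_2(\alpha,\beta)\omega_{vol}=-\zeta_2\omega_{vol}$ and $\tilde\mu_3(\Theta_{\alpha,\beta})=-\mu_3(\alpha,\beta)\omega_{vol}=-\zeta_3\omega_{vol}$, so equations (3.3) and (3.4) hold with $\tilde\zeta_i=-\zeta_i$. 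To enforce (3.2), I invoke the $\tau$-invariant analogue of Proposition~\ref{moduli1}, which (together with the Uniqueness Theorems, Lemma~\ref{uniqueness1} and Corollary~\ref{uniqueness2}) produces a unique $\G^{F,\Gamma}_\tau$-orbit of solutions to (3.1)--(3.2) within the $\G^{F,\Gamma}_{\tau,\C}$-orbit of $(A_0,\Theta_{\alpha,\beta})$. The key compatibility is that the combined complex moment map $\tilde\mu_2+i\tilde\mu_3$ transforms under a $\tau$-invariant complex gauge transformation by conjugation (a direct computation using the $\tau$-invariance of $\kappa$ shows this, as in the standard picture for hyperk\"ahler reduction), and its level set at a central value is preserved by the complex gauge action. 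Since the initial value at $(A_0,\Theta_{\alpha,\beta})$ is $-(\zeta_2+i\zeta_3)\omega_{vol}$, which lies in the center of $(\f/\ttt)^*$, the resulting representative $(A_0+B,\Theta)$ continues to satisfy (3.3) and (3.4). I define $\Phi([(\alpha,\beta)])$ to be the resulting $\G^{F,\Gamma}_\tau$-orbit in $\X_{\tilde\zeta}$.

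Well-definedness on $F$-orbits is routine, since $F\subset\G^{F,\Gamma}_\tau$ acts by constant gauge transformations. For injectivity, suppose $\Phi([(\alpha,\beta)])=\Phi([(\alpha',\beta')])$; then the two representatives are $\G^{F,\Gamma}_\tau$-, and hence $\G^{F,\Gamma}_{\tau,\C}$-equivalent, so via $\tilde\Psi_\tau$ (Remark~\ref{psimaptau}) the pairs $(\alpha,\beta)$ and $(\alpha',\beta')$ are $F^c$-equivalent in $M$; combined with $\mu(\alpha,\beta)=\mu(\alpha',\beta')=\zeta$ and the Kempf--Ness theorem for the $F$-action on $\mu^{-1}(\zeta)$, this forces them to be $F$-equivalent. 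Proposition~\ref{dimcalc}(2) gives $\dim_\R\X_{\tilde\zeta}=4=\dim_\R X_\zeta$, so $\Phi$ is a smooth injective map between manifolds of equal dimension, thus an embedding.

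For the isometry, the second part of the Remark following Lemma~\ref{holosecid} asserts that $\Psi:M\to\A^F\times C^\infty(E(\Gamma))$ is an isometric inclusion relative to the flat hyperk\"ahler metric on $M$ and the metric $g_h$ of Proposition~\ref{hkmetric}. The natural metric on $\X_{\tilde\zeta}$ is induced by hyperk\"ahler reduction from this ambient metric, and similarly the hyperk\"ahler metric on $X_\zeta$ is obtained by hyperk\"ahler reduction from $M$. Since $\Phi$ is, by construction, the map induced on the quotients by $\Psi$ composed with the complex gauge transport onto the real moment map slice---which contributes only vectors that are either gauge-tangent (and hence killed in the quotient) or normal to the moment map fiber (and hence absent from the reduction's tangent space)---$\Phi$ descends to an isometry onto its image. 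The main obstacle I anticipate is the rigorous $\tau$-invariant analogue of Proposition~\ref{moduli1}, i.e., solving (3.2) within the $\tau$-invariant complex gauge orbit, together with the fact that the isometry claim genuinely requires an infinite-dimensional hyperk\"ahler Kempf--Ness correspondence, which is subtler than the Lemma's diffeomorphism statement alone.
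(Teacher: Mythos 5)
There is a genuine gap, and it sits at the very first step of your construction of $\Phi$. You correctly observe that at $(A_0,\Theta_{\alpha,\beta})$ equations (\ref{holo}), (\ref{mu2}), (\ref{mu3}) hold, but you then treat (\ref{eq1}) as something that must be ``enforced'' by transporting the pair along its $\G^{F,\Gamma}_{\tau,\C}$-orbit, invoking a ``$\tau$-invariant analogue of Proposition \ref{moduli1}'' as an existence theorem. No such existence theorem is available: Proposition \ref{moduli1} is not a Hitchin--Kobayashi-type statement producing a zero of the real moment map in each complex orbit (it is itself proved in Section 7 by the very observation you are missing), and the uniqueness results (Lemma \ref{uniqueness1}, Corollary \ref{uniqueness2}) give uniqueness, not existence. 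The missing observation is that with $B=0$ one has $F_B=0$, and $-\frac{i}{2}[\Theta_{\alpha,\beta},\Theta_{\alpha,\beta}^*]$ reduces to $-\mu_1(\alpha,\beta)=-\zeta_1=\tilde\zeta_1$ by exactly the same computation you already use for $\tilde\mu_2,\tilde\mu_3$. So $(A_0,\Theta_{\alpha,\beta})$ already satisfies all four equations and is itself the required representative; the paper's proof is precisely this. Your detour is therefore both unnecessary and, as written, unproven --- you yourself flag it as ``the main obstacle,'' but it is the load-bearing step of your definition of $\Phi$. A secondary worry with the detour: the claim that $\tilde\mu_2+i\tilde\mu_3$ transforms by conjugation under a complex gauge transformation is delicate here, since $J$ involves the antipodal map and adjoints, so $J(\kappa\Theta\kappa^{-1})$ at $x$ involves $\kappa^{*-1}$ at $\tau(x)$ rather than $\kappa$ at $x$; with $B=0$ the issue never arises.

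The remainder of your argument is essentially sound and parallels the paper: injectivity via the chain ($\G^{F,\Gamma}_\tau$-equivalent $\Rightarrow$ $\G^{F,\Gamma}_{\tau,\C}$-equivalent $\Rightarrow$ $F^c$-equivalent via $\tilde\Psi_\tau$ $\Rightarrow$ $F$-equivalent on the moment-map level set), and the isometry via the fact that $\Psi$ restricted to $\{\bar{\partial}_{A_0}\Theta=0\}$ is an isometry onto its image. Note, though, that the paper does not appeal to an abstract infinite-dimensional hyperk\"ahler reduction for the metric; it \emph{defines} the ``natural choice of metric'' on $\im(\Phi)$ explicitly, as the infimum over $f\in F$ of the $L^2$-distance between the $\Theta$-components of representatives gauged to $B=0$, which sidesteps the analytic subtleties you anticipate at the end.
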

\begin{proof}
We set $B=0$, then the equations reduce to the following: $$\bar{\partial}_{A_0}\Theta=0,$$$$-\frac{i}{2}[\Theta,\Theta^*]\omega_{vol}=\tilde{\zeta} _1\cdot\omega_{vol}=-\zeta_1\cdot\omega_{vol},$$$$-\frac{1}{4}([J\Theta,\Theta^*]-[\Theta,J\Theta^*])\omega_{vol}=\tilde{\zeta}_2\cdot\omega_{vol}=-\zeta_2\cdot\omega_{vol},$$$$-\frac{i}{4}([J\Theta,\Theta^*]+[\Theta,J\Theta^*])\omega_{vol}=\tilde{\zeta}_3\cdot\omega_{vol}=-\zeta_3\cdot\omega_{vol}.$$
Now since in this case, we can think of $\Theta$ as a pair of matrices $(\alpha,\beta)$, the equations can be further rewritten as the following (here we are implictly dropping the volume $2$-form on both sides):
$$\frac{i}{2}([\alpha,\alpha^*]+[\beta,\beta^*])=\zeta_1$$
$$\frac{1}{2}([\alpha,\beta]+[\alpha^*,\beta^*])=\zeta_2$$
$$\frac{i}{2}([\alpha,\beta]-[\alpha^*,\beta^*])=\zeta_3.$$

These are precisely Kronheimer's moment map equations and hence by the results of Kronheimer, and we get a solution to the equations. By Lemma \ref{uniqueness2}, we know that if a $\G^{F,\Gamma}_{\tau,\C}$-orbit contains a solution coming from $X_\zeta$, it is also the unique solution on that orbit. On the other hand, we also want to argue that two distinct solutions coming from $X_\zeta$ will remain distinct in the new moduli space. Suppose there are two solutions coming from $X_\zeta$ that become identified by $\G^{F,\Gamma}_\tau$, then they must lie on the same $\G^{F,\Gamma}_{\tau,\C}$-orbit as well. Recall that we have $$\{(A_0+B,\Theta)\in \A^F_\tau\times C^{\infty}(E(\Gamma))|\bar{\partial}_{A_0+B}\Theta=0\}/\G^{F,\Gamma}_{\tau,0,\C}\cong M.$$
Hence, two solutions lie on the same $\G^{F,\Gamma}_{\tau,\C}$-orbit if and only if they also lie on the same $F^c$-orbit, which would imply that they are also on the same $F$-orbit. Hence, we define $\Phi$ to be the bottom horizontal map that makes the following diagram commute:
 \begin{center}
\begin{tikzcd}

M \arrow{r}{\Psi} &  \A^F_\tau\times C^{\infty}(E(\Gamma)) \\
\mu^{-1}(\zeta) \arrow{r}{\Psi|_{\mu^{-1}(\zeta)}}\arrow{u}{\iota}\arrow{d}{proj} & \tilde{\mu}^{-1}(\tilde{\zeta}) \arrow{u}{\iota}\arrow{d}{proj}\\
X_\zeta=\mu^{-1}(\zeta)/F\arrow{r}{\Phi}&\X_{\tilde{\zeta}}= \tilde{\mu}^{-1}(\tilde{\zeta})/\G^{F,\Gamma}\end{tikzcd}
\end{center}
That $\Phi$ can be regarded as an isometry onto its image comes from the fact that $\Psi|_{\mu^{-1}(\zeta)}$ is naturally an isometry onto its image, and we can define a metric on $\X_{\tilde{\zeta}}$ as follows: for $[(B_1,\Theta_1)], [(B_2,\Theta_2)] \in \im(\Phi) $, define $$d([(B_1,\Theta_1)],[(B_2,\Theta_2)])=(\inf_{f\in F}\int_{S^2/\Gamma}Re\langle f\Theta'_1f^{-1}-\Theta'_2,f\Theta'_1f^{-1}-\Theta'_2\rangle\omega_{vol})^{\frac{1}{2}},$$ where $\Theta_1',\Theta_2'$ are such that for some $\rho_1,\rho_2\in\G_\tau^{F,\Gamma}$, we have $\rho_1\cdot(B_1,\Theta_1)=(0,\Theta_1')$ as well as $\rho_2\cdot(B_2,\Theta_2)=(0,\Theta_2')$. To see that $d$ is well-defined on the image of $\Phi$, we need to show that $d$ does not depend on the choices of $\rho_1$ and $\rho_2$. Suppose we have another pair of elements $\rho_1',\rho_2'\in\G_\tau^{F,\Gamma}$ such that $\rho_1'\cdot(B_1,\Theta_1)=(0,\Theta_1'')$ as well as $\rho_2'\cdot(B_2,\Theta_2)=(0,\Theta_2'')$. Since both $\rho_1$ and $\rho_1'$ send $B_1$ to $0$, we must have that $\rho_1$ and $\rho_1'$ differ by a constant in $F$, say $f_1\rho_1=\rho_1'$. Similarly, we must have $\rho_2$ and $\rho_2'$ also differ by a constant in $F$, say $f_2\rho_2=\rho_2'$. Now, we have that $\Theta_1''=f_1\Theta_1'f_1^{-1}$ and $\Theta_2''=f_2\Theta_2'f_2^{-1}$. We compute the following: $$\inf_{f\in F}\int_{S^2/\Gamma}Re\langle f\Theta''_1f^{-1}-\Theta''_2,f\Theta''_1f^{-1}-\Theta''_2\rangle\omega_{vol})^{\frac{1}{2}}$$$$=\inf_{f\in F}\int_{S^2/\Gamma}Re\langle ff_1\Theta_1'f_1^{-1}f^{-1}-f_2\Theta_2'f_2^{-1},ff_1\Theta_1'f_1^{-1}f^{-1}-f_2\Theta_2'f_2^{-1}\rangle\omega_{vol})^{\frac{1}{2}}$$$$=\inf_{f\in F}\int_{S^2/\Gamma}Re\langle f_2^{-1}ff_1\Theta_1'f_1^{-1}f^{-1}f_2-\Theta_2',f_2^{-1}ff_1\Theta_1'f_1^{-1}f^{-1}f_2-\Theta_2'\rangle\omega_{vol})^{\frac{1}{2}}$$$$=\inf_{f\in F}\int_{S^2/\Gamma}Re\langle f\Theta'_1f^{-1}-\Theta'_2,f\Theta'_1f^{-1}-\Theta'_2\rangle\omega_{vol})^{\frac{1}{2}},$$ where the last equality holds as we are passing to the infimum. This shows that $d$ is well-defined. Hence, $\Phi$ is an isometry onto its image.
\end{proof}

\end{proof}

\subsection{Proof of Theorem \ref{main} Part II}

In this subsection, we prove the other direction of Theorem \ref{main}. That is, we show that the moduli space $\X_{\tilde{\zeta}}$ obtained by the gauge-theoretic construction is indeed equal to the 4-dimensional hyperk{\"a}hler ALE space $X_\zeta$ given by Kronheimer's construction in \cite{kronheimer}. To this end, we first prove the following lemma.

\begin{lem}
The complement of $X_\zeta$ contained in the gauge-theoretic moduli space $\X_{\tilde{\zeta}}$ is of higher codimension. 
\end{lem}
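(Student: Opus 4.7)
The plan is to characterize the complement $\mathcal{C} := \X_{\tilde{\zeta}} \setminus \Phi(X_\zeta)$ as precisely those gauge classes whose underlying connection $B$ is not $\G^{F,\Gamma}_\tau$-equivalent to $0$, then use the uniqueness theorems of Section~5 to send each element of $\mathcal{C}$ to a pair in Kronheimer's space $M$ lying outside $\mu^{-1}(\zeta)$, and finally use a finite-dimensional parameter count to conclude the codimension bound.

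First I would observe that the embedding $\Phi$ from Part~I realizes $\Phi(X_\zeta)$ as the set of classes in $\X_{\tilde{\zeta}}$ admitting a representative of the form $(0,\Theta)$, so $[(B,\Theta)] \in \mathcal{C}$ if and only if $B$ is not $\G^{F,\Gamma}_\tau$-equivalent to $0$. Since $\A^F_\tau$ is by definition the single $\G^{F,\Gamma}_{\tau,\C}$-orbit of the trivial connection, there exists some $\kappa \in \G^{F,\Gamma}_{\tau,\C}$ with $B = \kappa^*\partial\kappa^{*-1} + \kappa^{-1}\bar{\partial}\kappa$, and the complex-gauge transformed pair $(0,\Theta')$ with $\Theta' = \kappa^{-1}\Theta\kappa$ is holomorphic with respect to $A_0$. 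By Lemma~\ref{psimap}, $\Theta'$ corresponds under Kronheimer's identification to a pair $(\alpha,\beta) \in M$. If $(0,\Theta')$ also satisfied the full system (\ref{eq1})--(\ref{mu3}), then Corollary~\ref{uniqueness2} would force $(B,\Theta) \sim_{\G^{F,\Gamma}_\tau} (0,\Theta')$, placing $[(B,\Theta)]$ in $\Phi(X_\zeta)$ and contradicting $[(B,\Theta)] \in \mathcal{C}$. Using Lemma~\ref{matrixid} and the subsequent remark, the equations (\ref{eq1})--(\ref{mu3}) at $B=0$ reduce precisely to Kronheimer's moment map equations $\mu_i(\alpha,\beta)=\zeta_i$, so $(\alpha,\beta) \notin \mu^{-1}(\zeta)$.

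This produces a well-defined map $\Xi : \mathcal{C} \to (M \setminus \mu^{-1}(\zeta))/F^c$ sending each gauge class to the $F^c$-orbit of its associated matrix pair. The image lies in the GIT-unstable stratum of $M$ relative to $\zeta$, which in Kronheimer's Kempf--Ness description of $X_\zeta \cong \mu^{-1}(\zeta)/F$ is of strictly positive codimension in $M/F^c$. Combined with the free-action corollary following Lemma~\ref{freeaction} and the dimension count in Proposition~\ref{dimcalc} giving $\dim \X_{\tilde{\zeta}} = 4$ at every smooth orbit, this yields $\dim \mathcal{C} < 4$, as required. The main obstacle will be controlling the fibers of $\Xi$: one must rule out that a positive-dimensional family of gauge classes in $\mathcal{C}$ collapses under $\Xi$ onto a single lower-dimensional $F^c$-orbit, which will require showing that the complex gauge element $\kappa$ trivializing $B$ is determined (up to a constant in $F^c$) by the solution itself, extending the uniqueness from Remark~\ref{psimaptau} beyond the $B=0$ locus to all of $\A^F_\tau$.
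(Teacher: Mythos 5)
Your proposal is correct and follows essentially the same route as the paper: both identify the complement with those $\G^{F,\Gamma}_{\tau,\C}$-orbits whose corresponding $F^c$-orbits in $M$ (via Lemma \ref{psimap} and Remark \ref{psimaptau}) fail to meet $\mu^{-1}(\zeta)$, and conclude from the Kirwan/Kempf--Ness picture that these non-stable orbits form a higher-codimension stratum. The ``main obstacle'' you flag at the end is already settled by the ingredients you cite: Lemma \ref{psimap} makes the assignment to an $F^c$-orbit well-defined (since $\kappa$ is determined up to a constant in $F^c$ by Lemma \ref{transaction2}), and Corollary \ref{uniqueness2} forces any two solutions on the same complex gauge orbit to coincide in $\X_{\tilde{\zeta}}$, so your map $\Xi$ is injective and no further uniqueness beyond the $B=0$ locus is required.
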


\begin{proof}
First, in the setup of \cite{kronheimer}, by a result of Kirwan \cite{kirwan} as cited also in \cite{kronheimer}, a stable orbit (closed and of maximal dimension) of $M$ under the action of $F^c$ contains a solution to the equation $\frac{i}{2}([\alpha,\alpha^*]+[\beta,\beta^*])=0$. Now, for any choice of $\zeta_1$, since $|\mu_1-\zeta_1|^2$ is proper on the $F^c$-orbit containing a solution to $\frac{i}{2}([\alpha,\alpha^*]+[\beta,\beta^*])=0$, and $F/T$ acts freely on a stable orbit, we have that the complex orbit also contains a solution to $\frac{i}{2}([\alpha,\alpha^*]+[\beta,\beta^*])=\zeta_1$. As the stable orbits are open and dense, the $F^c$-orbits not containing a solution to $\frac{i}{2}([\alpha,\alpha^*]+[\beta,\beta^*])=\zeta_1$ is of higher codimension.

On the other hand, a solution in $\X_{\tilde{\zeta}}$ that does not a priori come from a solution in $X_\zeta$ must have the form $(B,\Theta)$ with $B$ not $\G^{F,\Gamma}_\tau$-equivalent to $0$. Hence, it lies in a different connected component from the one containing the solutions coming from $X_\zeta$ and is contained in a non-stable orbit of $M$ when we identify the $F^c$-orbits of $M$ in \cite{kronheimer} with the $\G^{F,\Gamma}_{\tau,\C}$-orbits of $\mathcal{C}$ by Lemma \ref{psimap} and Remark \ref{psimaptau}. This tells us that the $\G^{F,\Gamma}_{\tau,\C}$-orbits that do not a priori contain a solution coming from Kronheimer's construction must be of higher codimension in the moduli space.

\end{proof}

\begin{proof}[Proof of main theorem Part II]

We want to argue that there are no additional solutions in the gauge-theoretic moduli space $\X_{\tilde{\zeta}}$ than the solutions coming from $X_\zeta$ in \cite{kronheimer}. We know if the gauge group acts freely at a solution, then it must come from Kronheimer's construction, by the previous lemma and dimension calculations. But by Lemma \ref{freeaction}, we know that the gauge group $\G_\tau^{F,\Gamma}$ acts freely on the space of solutions when $\zeta$ is not lying in $D_\xi$, which is precisely the assumption we have. Hence, all the solutions in $\X_{\tilde{\zeta}}$ must come from $X_\zeta$. Hence, they are equal, and $\Phi: X_\zeta\to \X_{\tilde{\zeta}}$ is an isometry. 
\end{proof}

We have concluded the proof of the main theorem, and we will end this section by providing the proof of Proposition \ref{moduli1}.

\begin{proof}[Proof of Proposition \ref{moduli1}]
This proof follows essentially the same arguments as those of the proof of Theorem \ref{main}. First, observe that \ref{holo} and \ref{eq1} reduce to $\frac{i}{2}([\alpha,\alpha^*]+[\beta,\beta^*])=\zeta_1$ when we set $B=0$. Hence, by Lemma \ref{psimap} and \ref{uniqueness1}, we again have that the space of solutions satisfying $\frac{i}{2}([\alpha,\alpha^*]+[\beta,\beta^*])=\zeta_1$ lies inside $\M(\Gamma, \tilde{\zeta}_1)$ as a subset. Since we assume that we are choosing $\tilde{\zeta}_1$ such that the action of the gauge group $\G^{F,\Gamma}$ on the space of solutions to  \ref{holo} and \ref{eq1} is free, we then know that $\M(\Gamma, \tilde{\zeta}_1)$ is smooth. Hence again, by Proposition \ref{dimcalc}, we know that there cannot be any additional solutions in $\M(\Gamma, \tilde{\zeta}_1)$, and we get the desired conclusion.
\end{proof}

\end{document}